\newdimen\plusheight
\def\+{\;\lower\plusheight\hbox{$+$}\;}
\newdimen\minusheight
\def\-{\;\lower\minusheight\hbox{$-$}\;}
\newdimen\cdotsheight
\def\cds{\lower\cdotsheight\hbox{$\cdots$}}
\newtheorem{question}{Question}
\newtheorem{conjecture}{Conjecture}
\theoremstyle{definition}
\newtheorem{problem}{Problem}
\theoremstyle{definition}
\numberwithin{equation}{section}
\theoremstyle{plain}
\newtheorem{theorem}{Theorem}[section]
\newtheorem{corollary}[theorem]{Corollary}
\newtheorem{definition}[theorem]{Definition}
\newtheorem{lemma}[theorem]{Lemma}
\theoremstyle{definition}
\titleformat{\section}{\LARGE\bfseries}
\titleformat{\subsubsection}{\large\bfseries}
\renewcommand\section{\@startsection{section}{1}{\z@}%
                                  {-3.5ex \@plus -1ex \@minus -.2ex}%
                                 {2.3ex \@plus.2ex}%
                                 {\normalfont\Large\bfseries}}
\renewcommand\subsection{\@startsection{subsection}{1}{\z@}%
                                  {-3.5ex \@plus -1ex \@minus -.2ex}%
                                 {2.3ex \@plus.2ex}%
                                 {\normalfont\large\bfseries}}
\begin{document}
\title{Rigidity in Hyperbolic Dehn filling}
\author{BoGwang Jeon \\
with an appendix by Ian Agol}
\maketitle
\begin{abstract}
This paper concerns with a rigidity of core geodesics in hyperbolic Dehn fillings. For instance, for an $n$-cusped hyperbolic $3$-manifold $\mathcal{M}$ having non-symmetric cusp shapes, we show any Dehn filling of $\mathcal{M}$ with sufficiently large coefficient is uniquely determined by the product of the holonomies of its core geodesics. We also explore various implications of the main results. An appendix by I. Agol provides an alternative geometric proof of one of the corollaries of our main arguments.   
\end{abstract}
\section{Introduction}
\subsection{Main results}
The following is the main statement of Thurston's hyperbolic Dehn filling theory:
\begin{theorem}[Thurston]
Let $\mathcal{M}$ be an $n$-cusped hyperbolic $3$-manifold and $\mathcal{M}_{(p_1/q_1,\dots, p_n/q_n)}$ be its $(p_1/q_1, \dots, p_n/q_n)$-Dehn filling. Then $\mathcal{M}_{(p_1/q_1,\dots, p_n/q_n)}$ is hyperbolic for $|p_i|+|q_i|$ ($1\leq i\leq n$) sufficiently large. 
\end{theorem}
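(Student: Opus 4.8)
The plan is to regard $\mathcal{M}_{(p_1/q_1,\dots,p_n/q_n)}$ as a point in the deformation space of (possibly incomplete) hyperbolic structures on $\mathcal{M}$ and to solve for that point when the filling coefficients are large. First I would set up this deformation space in one of two equivalent ways: (i) fix an ideal triangulation of $\mathcal{M}$ and attach to each ideal tetrahedron a shape parameter $z_j\in\mathbb{C}\smallsetminus\{0,1\}$, so that Thurston's edge-gluing equations cut out a complex-analytic variety, every point of which with all $\operatorname{Im}z_j>0$ defines an incomplete hyperbolic structure on $\mathcal{M}$, with the complete structure corresponding to a distinguished solution $z^{0}$; or (ii) work inside the character variety $X(\mathcal{M})=\operatorname{Hom}(\pi_1\mathcal{M},\operatorname{PSL}_2\mathbb{C})/\!/\operatorname{PSL}_2\mathbb{C}$ near the discrete faithful character $[\rho_0]$ of the complete structure.

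The second step is to establish that near $z^{0}$ (resp. $[\rho_0]$) this space is a smooth complex manifold of dimension exactly $n$, the number of cusps, with the $n$ complex translation lengths $u_1,\dots,u_n$ of the meridians $m_1,\dots,m_n$ forming a local coordinate system. In the triangulation model this is the Neumann--Zagier analysis of the gluing variety; invariantly it follows from the Weil--Garland local rigidity theorem together with the ``half-lives-half-dies'' computation of $H^1$ of the cusp tori. In these coordinates the complex translation length $v_i$ of the $i$-th longitude $\ell_i$ becomes a holomorphic function $v_i=\tau_i(u_i)$ with $\tau_i(0)=0$ and $\tau_i'(0)$ a nonzero constant determined by the shape of the $i$-th cusp torus (nonvanishing using that the cusp cross-sections are genuine Euclidean tori).

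The third step is the Dehn surgery equation. An incomplete structure with parameters $(u_1,\dots,u_n)$ completes to a smooth hyperbolic metric on $\mathcal{M}_{(p_1/q_1,\dots,p_n/q_n)}$, with each core curve a closed geodesic, exactly when for every $i$ the holonomy of $p_im_i+q_i\ell_i$ is a rotation by $2\pi$ about the would-be core axis, i.e.
\[
  p_i u_i + q_i v_i(u_i) \;=\; 2\pi\sqrt{-1},\qquad i=1,\dots,n .
\]
Writing $v_i(u_i)=\tau_i'(0)\,u_i+O(u_i^2)$, this reads $u_i\bigl(p_i+q_i\tau_i'(0)+O(u_i)\bigr)=2\pi\sqrt{-1}$, so for $|p_i|+|q_i|$ large the system has a unique small solution with $u_i\approx 2\pi\sqrt{-1}\big/\bigl(p_i+q_i\tau_i'(0)\bigr)$; I would make this rigorous cusp by cusp via the implicit function theorem (or a contraction-mapping argument). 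In particular $u_i\to 0$ as $|p_i|+|q_i|\to\infty$, so the solution stays inside the chart on which the deformation space is smooth.

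The step I expect to be the main obstacle is passing from this algebraic solution to an honest complete hyperbolic metric on the filled manifold. Two points need care. First, non-degeneracy: one must check that at the solution the tetrahedron shapes still satisfy $\operatorname{Im}z_j>0$ (equivalently, that the holonomy is still discrete and faithful and the developing map is still a local diffeomorphism), which follows from continuity of the shapes in $u$ together with $u_i\to 0$. Second, the completion: near the $i$-th cusp the deformed structure induces on the cusp torus a similarity structure whose holonomy, purely translational at $u_i=0$, acquires a nontrivial rotational-dilational part; when the surgery equation holds the rotational part closes up after angle $2\pi$, and a careful analysis of the metric completion identifies a deleted horoball neighborhood of the cusp with a solid-torus tube around a short closed geodesic. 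Assembling these local pictures shows the completion is a closed (or lower-cusped) complete hyperbolic $3$-manifold homeomorphic to $\mathcal{M}_{(p_1/q_1,\dots,p_n/q_n)}$, which is the assertion; ``sufficiently large'' amounts to excluding, at each cusp, the finitely many slopes on which the above estimates fail.
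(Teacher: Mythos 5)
The paper states this result as a background theorem of Thurston and supplies no proof, so there is nothing in the text to compare your argument against; I will assess the sketch on its own terms. Structurally it is the standard Thurston/Neumann--Zagier argument and is correct in outline. The one imprecision worth flagging is your claim that $v_i$ is a function of $u_i$ alone: the longitude translation length depends holomorphically on all $n$ meridian parameters, $v_i=v_i(u_1,\dots,u_n)$. What rescues the argument is exactly the structure the paper records in its statement of the Neumann--Zagier potential, namely $v_i=u_i\,\tau_i(u_1,\dots,u_n)$ with $\tau_i(0,\dots,0)=\tau_i\in\mathbb{C}\setminus\mathbb{R}$; hence $\partial v_i/\partial u_j$ vanishes at the origin for $j\ne i$, the Jacobian of the system $p_iu_i+q_iv_i=2\pi\sqrt{-1}$ at $u=0$ is the diagonal matrix with entries $p_i+q_i\tau_i$ (each nonzero since $\tau_i\notin\mathbb{R}$), and your implicit-function-theorem step does go through. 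But away from $u=0$ the equations are coupled, so one cannot literally solve ``cusp by cusp'' as your phrasing suggests; the estimate $u_i\approx 2\pi\sqrt{-1}/(p_i+q_i\tau_i)$ is the leading-order behaviour of the genuine simultaneous solution. A smaller caveat: route (i) presumes $\mathcal{M}$ admits a positively oriented geometric ideal triangulation, which is not known in general; one must either allow flat or negatively oriented tetrahedra, or take your route (ii) through the character variety and Weil rigidity, which is the cleaner way to obtain the local smoothness and the $u$-coordinates.
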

Topologically, $\mathcal{M}_{(p_1/q_1, \dots, p_n/q_n)}$ is obtained by adding $n$-geodesics to $\mathcal{M}$ so called the \textit{core geodesics} of $\mathcal{M}_{(p_1/q_1, \dots, p_n/q_n)}$. The following theorem was also proved by Thurston as a part of his hyperbolic Dehn filling theory:

\begin{theorem}[Thurston]\label{19092501}
Let $\mathcal{M}$ and $\mathcal{M}_{(p_1/q_1,\dots, p_n/q_n)}$ be the same as above. 
\begin{enumerate}
\item For $|p_i|+|q_i|$ ($1\leq i\leq n$) sufficiently large, the first $n$-shortest geodesics of  
$\mathcal{M}_{(p_1/q_1,\dots, p_n/q_n)}$ are its core geodesics.
\item The lengths of core geodesics all converge to $0$ as $|p_i|+|q_i|\rightarrow\infty$. 
\end{enumerate} 
\end{theorem}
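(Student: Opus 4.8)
The plan is to combine a Neumann--Zagier type asymptotic expansion for the complex length of a core geodesic with the geometric convergence $\mathcal{M}_{(p_1/q_1,\dots,p_n/q_n)}\to\mathcal{M}$ furnished by Thurston's theory, together with the Margulis lemma. I fix a sequence of filling slopes with $\min_i(|p_i|+|q_i|)\to\infty$ and write $\mathcal{M}_k$ for the resulting filled manifold, which is hyperbolic for large $k$ by Thurston's hyperbolic Dehn filling theorem; let $\gamma_{1,k},\dots,\gamma_{n,k}$ denote its core geodesics and let $\mu_i,\lambda_i$ be a meridian--longitude basis for the $i$-th cusp of $\mathcal{M}$.

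\emph{Step 1 (proof of (2)).} I would use that the incomplete structures near the complete one are parametrised by the meridian holonomies $u=(u_1,\dots,u_n)$, with the longitude holonomies $v_i=v_i(u)$ holomorphic, $v_i(0)=0$ and $\partial v_i/\partial u_j(0)=\tau_i\,\delta_{ij}$, where $\tau_i\in\mathbb{H}$ is the shape of the $i$-th cusp, and that the $(p_i/q_i)$-filling is the solution near $u=0$ of $p_iu_i+q_iv_i(u)=2\pi\sqrt{-1}$, which tends to $0$ as the coefficients grow. Choosing $(r_i,s_i)\in\mathbb{Z}^2$ with $p_is_i-q_ir_i=1$, the curve $r_i\mu_i+s_i\lambda_i$ is freely homotopic to $\gamma_{i,k}$, so its complex length $\mathcal{L}_{i,k}=\ell_{i,k}+\sqrt{-1}\,\theta_{i,k}$ equals $r_iu_i+s_iv_i$ up to sign and an integer multiple of $2\pi\sqrt{-1}$. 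Inverting the linear relation between $(2\pi\sqrt{-1},\mathcal{L}_{i,k})$ and $(u_i,v_i)$ and substituting $v_i=\tau_iu_i+O(u_i^2)$ gives, after a short computation,
\[
\ell_{i,k}=\frac{2\pi\,\mathrm{Im}\,\tau_i}{|p_i+\tau_iq_i|^2}\,\bigl(1+o(1)\bigr),
\]
and since $|p_i+\tau_iq_i|^2$ is a positive definite quadratic form in $(p_i,q_i)$ this forces $\ell_{i,k}\to0$ (indeed $\ell_{i,k}=O(1/(|p_i|^2+|q_i|^2))$), which is (2). Alternatively (2) is visible directly: at the complete structure the $i$-th core geodesic degenerates into the cusp, so nearby fillings carry arbitrarily short core geodesics.

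\emph{Step 2 (the geometric picture).} Next I would fix $\varepsilon>0$ below the three-dimensional Margulis constant. By Thurston's theory the pointed manifolds $(\mathcal{M}_k,x_k)$, with $x_k$ in the thick part, converge geometrically to $\mathcal{M}$; each of the $n$ cusps of $\mathcal{M}$ is ``filled in'' and becomes, in the limiting picture, a neighbourhood of the corresponding core geodesic $\gamma_{i,k}$, whose $\varepsilon$-Margulis tube $T(\gamma_{i,k})$ is then a component of the $\varepsilon$-thin part $\mathcal{M}_k^{<\varepsilon}$ once $k$ is large (by Step~1, $\ell_{i,k}<\varepsilon$). Moreover $\mathcal{M}_k\setminus\bigcup_iT(\gamma_{i,k})$ converges geometrically to $\mathcal{M}$ with maximal cusp neighbourhoods removed, a compact set, so its diameter is bounded by a constant $D$ independent of $k$. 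I would then argue these are the only components of $\mathcal{M}_k^{<\varepsilon}$: any extra component would be a Margulis tube about a closed geodesic $\delta_k$ with $\ell(\delta_k)<\varepsilon$ disjoint from all the $T(\gamma_{i,k})$, hence contained in $\mathcal{M}_k\setminus\bigcup_iT(\gamma_{i,k})$; but then a point of $\delta_k$ would lie within distance $D$ of $x_k$ while having injectivity radius $\le\ell(\delta_k)/2\to0$, contradicting the fact that injectivity radii stay bounded below at bounded distance from a thick point of the geometric limit $\mathcal{M}$, once $k$ is large.

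\emph{Step 3 (proof of (1)).} Finally, let $\delta$ be any closed geodesic of $\mathcal{M}_k$ with $\ell(\delta)<\varepsilon$. Every point of $\delta$ has injectivity radius at most $\ell(\delta)/2<\varepsilon/2$, so $\delta\subset\mathcal{M}_k^{<\varepsilon}$, and being connected $\delta$ lies in a single component of $\mathcal{M}_k^{<\varepsilon}$, which by Step~2 is one of the tubes $T(\gamma_{i,k})$; since the only closed geodesic inside a Margulis tube is its core, $\delta=\gamma_{i,k}$. Equivalently, every non-core closed geodesic of $\mathcal{M}_k$ has length $\ge\varepsilon$, while by Step~1 the $n$ (pairwise distinct) core geodesics have length $<\varepsilon$ for large $k$; hence the $n$ shortest closed geodesics of $\mathcal{M}_k$ are exactly $\gamma_{1,k},\dots,\gamma_{n,k}$, which is (1). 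The main obstacle is Step~2: extracting from Thurston's Dehn filling theory, uniformly over all sufficiently large coefficients, the precise statement that $\mathcal{M}_k^{<\varepsilon}$ consists of exactly the $n$ Margulis tubes of the core geodesics and that the thick part has uniformly bounded diameter. This is where one genuinely uses the strength of geometric convergence (convergence of the thick parts together with the standard dictionary between thin parts, short geodesics and cusps), not just the existence statement of the Dehn filling theorem; granted that picture, Steps~1 and~3 are routine.
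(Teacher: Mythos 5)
The paper does not prove this statement; it is quoted as a background theorem of Thurston (with references \cite{nz}\cite{thu} elsewhere in the text), so there is no internal proof to compare against. Judged on its own terms, your outline is the standard and correct route: Neumann--Zagier asymptotics give part~(2), and geometric convergence together with the Margulis lemma give part~(1). The computation in Step~1 is right, and Step~3 is a clean deduction once Step~2 is in place.

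The one real soft spot is the sentence in Step~2 asserting that $\mathcal{M}_k\setminus\bigcup_iT(\gamma_{i,k})$ ``converges geometrically to $\mathcal{M}$ with maximal cusp neighbourhoods removed\ldots so its diameter is bounded by a constant $D$ independent of $k$.'' As stated this comes close to assuming what you want to prove: if $\mathcal{M}_k$ had an extra short geodesic $\delta_k$ not among the cores, then $\mathcal{M}_k\setminus\bigcup_iT(\gamma_{i,k})$ would contain the deep Margulis tube $T(\delta_k)$, whose radius blows up as $\ell(\delta_k)\to0$, and the diameter bound would fail --- so you cannot take the diameter bound as an independent input. The fix is to invoke the sharper form of Thurston's Dehn filling theorem: $\mathcal{M}_k$ decomposes as a $(1+o(1))$-bilipschitz image of the compact truncated manifold $\mathcal{M}_{\mathrm{trunc}}$ (which has diameter bounded once and for all and injectivity radius uniformly bounded below) glued to $n$ solid tori that shrink into the $\varepsilon$-tubes of the cores. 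Any $\varepsilon$-short geodesic therefore cannot meet the bilipschitz region for $k$ large, hence lies in one of the $n$ solid tori, where the only closed geodesic is the core. With that substitution Step~2 is airtight and the rest of your argument goes through verbatim.
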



In this paper, we prove a strong rigidity of core geodesics in hyperbolic Dehn fillings. Before stating the main results of the paper, we first introduce 
\begin{definition}
Let $\mathcal{M}$ be an $n$-cusped hyperbolic $3$-manifold with cusp shapes $\tau_1, \dots, \tau_n$. We say $\mathcal{M}$ has non-symmetric cusp shapes if, for any $i,j$ ($1\leq i\neq j\leq n$), there are no $a,b,c,d\in \mathbb{Z}$ such that   
\begin{equation*}
\tau_i= \frac{a\tau_j+b}{c\tau_j+d}
\end{equation*} 
\end{definition}
If $\mathcal{M}$ has non-symmetric cusp shapes, then there is no self-isometry of $\mathcal{M}$ sending one cusp to another. Also note that the above definition implies each $\tau_i$ is non-quadratic. 

Now we state the first main result of this paper:
\begin{theorem}\label{19072401}
Let $\mathcal{M}$ be an $n$-cusped hyperbolic $3$-manifold having non-symmetric cusp shapes. Let 
\begin{equation*}
\big\{t^1_{(p_1/q_1,\dots, p_n/q_n)}, \dots, t^n_{(p_1/q_1, \dots, p_n/q_n)}\big\}\quad (\text{where }|t^i_{(p_1/q_1,\dots, p_n/q_n)}|>1)
\end{equation*} 
be the set of holonomies of $(p_1/q_1, \dots, p_n/q_n)$-Dehn filling of $\mathcal{M}$. For sufficiently large $|p_i|+|q_i|$ and $|p'_i|+|q'_i|$ ($1\leq i\leq n$), if 
\begin{equation}\label{19092902}
\prod^n_{i=1}\Big(t^{j_i}_{(p_1/q_1,\dots, p_n/q_n)}\Big)^{a_i}=\prod^n_{i=1}\Big(t^{j'_i}_{(p'_1/q'_1,\dots, p'_n/q'_n)}\Big)^{a'_i},
\end{equation}
then 
\begin{equation*}
\Big(t^{j_i}_{(p_1/q_1,\dots, p_n/q_n)}\Big)^{a_i}=\Big(t^{j'_i}_{(p'_1/q'_1,\dots, p'_n/q'_n)}\Big)^{a'_i},
\end{equation*}
for each $1\leq i\leq n$. Moreover, if $a_i\neq 0$, then  
\begin{equation*}
p_i/q_i=p'_i/q'_i.
\end{equation*}
\end{theorem}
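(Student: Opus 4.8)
The plan is to treat the holonomies $t^i_{(p_1/q_1,\dots,p_n/q_n)}$ as values of the holonomy functions on the geometric component of the character variety (or more concretely, as algebraic functions of the Dehn filling parameters via the gluing variety / Neumann--Zagier data), and to exploit the fact that these functions admit a \emph{Taylor-type asymptotic expansion} in the parameters $u_i = 1/(p_i + q_i \tau_i)$ (up to the usual ambiguities), with leading coefficients governed by the cusp shapes $\tau_i$. The first step is to set up this expansion: write $\log t^i_{(p_1/q_1,\dots,p_n/q_n)}$ as a convergent power series in the $u_j$'s whose linear term in $u_i$ has coefficient essentially $\tau_i$ and whose other linear terms vanish (this is the Neumann--Zagier asymptotics, which I would quote from the earlier sections of the paper). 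Then equation \eqref{19092902} becomes an identity between two such power series after taking logarithms, valid for a Zariski-dense set of filling parameters, hence an identity of analytic functions.

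Second, I would use the non-symmetry of the cusp shapes to separate variables. Comparing the linear parts of the two sides of the logarithmic identity forces a matching between the index sets $\{j_i\}$ and $\{j'_i\}$: the coefficient of $\tau_{j_i}$ appearing on the left must be reproduced on the right, and because no $\tau_j$ is a Möbius transform (over $\mathbb{Z}$) of another $\tau_k$ — in particular the $\tau_j$ together with $1$ are linearly independent over $\mathbb{Q}$ in the relevant sense — a term coming from cusp $j_i$ cannot be cancelled or reproduced by a combination of terms from other cusps. This should yield, for each $i$, that $a_i \log t^{j_i}_{(p_1/q_1,\dots,p_n/q_n)}$ and $a'_i \log t^{j'_i}_{(p'_1/q'_1,\dots,p'_n/q'_n)}$ agree as functions (after a suitable permutation), which is the first claimed conclusion $\big(t^{j_i}\big)^{a_i} = \big(t^{j'_i}\big)^{a'_i}$.

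Third, assuming $a_i \neq 0$, I would promote the equality of holonomies to equality of filling slopes. Once $\big(t^{j_i}_{(p_1/q_1,\dots,p_n/q_n)}\big)^{a_i} = \big(t^{j'_i}_{(p'_1/q'_1,\dots,p'_n/q'_n)}\big)^{a'_i}$, take logarithms and read off the coefficient of the leading term $u_i$ versus $u'_i$: the leading behaviour is $a_i \cdot \tfrac{c}{p_i + q_i\tau_i}$ on one side and $a'_i \cdot \tfrac{c}{p'_i + q'_i\tau'_i}$ on the other (with the same $\tau$ since we are at the same cusp, matched by the permutation); because $\tau_i$ is irrational of infinite degree, $p_i + q_i \tau_i$ determines $(p_i, q_i)$, and comparing magnitudes/higher-order terms pins down $a_i = a'_i$ and $p_i/q_i = p'_i/q'_i$. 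Here I would need the uniqueness statement from Theorem \ref{19092501}(1)--(2) (shortest geodesics are the cores, lengths $\to 0$) to rule out that a core geodesic of one filling equals some other, longer geodesic of the other filling, and to ensure the indexing $|t^i|>1$ is unambiguous for large coefficients.

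The main obstacle I anticipate is controlling the \emph{non-linear} parts of the power series rigorously enough to run the separation-of-variables argument: the Neumann--Zagier expansion has cross terms $u_j u_k$, $u_j^2$, etc., and one must ensure that the argument comparing linear parts is not spoiled by these higher-order contributions — i.e., that the map from filling parameters to the tuple of holonomies is, up to the stated ambiguities, injective with an analytically controlled inverse. This is presumably where the bulk of the paper's technical machinery (a quantitative version of the implicit function theorem or a careful analysis of the gluing variety near the complete structure, together with the arithmetic input that the $\tau_i$ are non-quadratic and mutually non-commensurable via $\mathrm{GL}_2(\mathbb{Z})$) is invoked, and I would lean on those earlier results rather than reprove them.
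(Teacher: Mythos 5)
Your proposal misreads the logical structure of the hypothesis, and this is fatal to the analytic strategy you outline. Equation \eqref{19092902} is a single multiplicative relation that is assumed to hold at a single pair of Dehn filling points $(p_i/q_i)$ and $(p'_i/q'_i)$; it is \emph{not} an identity of functions of the filling parameters, and there is no ``Zariski-dense set of filling parameters'' on which it holds that would let you upgrade it to an equality of convergent power series. Consequently, the step where you compare linear parts of two power-series identities never gets off the ground: all you have is one algebraic equation at one algebraic point (or, in the contrapositive/sequential formulation, a sequence of points each satisfying its own relation), and nothing in Neumann--Zagier asymptotics by itself lets you pass from a pointwise relation to a term-by-term matching of expansion coefficients.

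What the paper actually does is entirely Diophantine, not function-theoretic. Relation \eqref{19092902}, combined with the $2n$ Dehn filling equations, places the point $P$ on $\mathcal{X}\times\mathcal{X}\subset\mathbb{C}^{4n}$ in an algebraic subgroup of codimension $2n+1$, i.e.\ codimension $\dim(\mathcal{X}\times\mathcal{X})+1$. The bounded-height theorem (Theorem \ref{19101204}) gives a uniform height bound on all such points; Bombieri--Masser--Zannier's theorem (Theorem \ref{19090804}) then says there are only finitely many such points of bounded height \emph{outside} the torsion-anomalous locus; Northcott finiteness plus the paper's local resolution of the torsion openness conjecture (Theorems \ref{19090805}, \ref{19090806}) force all but finitely many of these points into a fixed finite union of anomalous subsets $\mathcal{N}_{\mathcal{X}\times\mathcal{X}}\cap K_j$. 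The cusp-shape hypothesis then enters only at the end, in Lemma \ref{19071803} (via the elementary Lemmas \ref{17052301} and \ref{clai}), to constrain the shape of the Jacobian of an anomalous intersection at the identity and pin the $K_j$ down to the form $M_i^a=M_{n+i}^b$, $L_i^a=L_{n+i}^b$; an induction on the number of cusps finishes the argument. Your proposal has the cusp-shape input doing the wrong job (separating variables in a nonexistent power-series identity), and it omits the bounded-height / anomalous-subvariety machinery entirely, which is not an optional ``quantitative IFT'' ingredient but the core of the proof. Without it there is no way to get from a pointwise multiplicative relation to any structural conclusion about the filling coefficients.
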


When $a_i=a'_i=1$ for all $i$, the above theorem implies 
\begin{corollary}\label{19092801}
Let $\mathcal{M}$ and $t^i_{(p_1/q_1,\dots, p_n/q_n)}$ be the same as above. For sufficiently large $|p_i|+|q_i|$ and $|p'_i|+|q'_i|$ ($1\leq i\leq n$), 
\begin{equation*}
\prod^{n}_{i=1}t^{i}_{(p_1/q_1,\dots, p_n/q_n)}=\prod^n_{i=1}t^{i }_{(p'_1/q'_1,\dots, p'_n/q'_n)}
\end{equation*}
 iff
\begin{equation*}
(p_1/q_1,\dots, p_n/q_n)=(p'_1/q'_1,\dots, p'_n/q'_n).
\end{equation*}
\end{corollary}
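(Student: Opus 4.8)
The plan is to derive Corollary~\ref{19092801} directly from Theorem~\ref{19072401}; all of the substance lies in the theorem, which we are free to invoke.

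For the backward implication, if $(p_1/q_1,\dots,p_n/q_n)=(p'_1/q'_1,\dots,p'_n/q'_n)$ then the two Dehn fillings are literally the same $3$-manifold, and by Mostow--Prasad rigidity its hyperbolic structure is unique; hence the holonomy $t^i$ of the core geodesic produced by the slope $p_i/q_i$ is the same complex number (once an orientation of $\mathcal{M}$ and the normalization $|t^i|>1$ are fixed) on both sides, so the two products coincide. This requires nothing beyond the largeness that guarantees the fillings are hyperbolic.

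For the forward implication, I would apply Theorem~\ref{19072401} with the particular choice $j_i=j_i'=i$ and $a_i=a_i'=1$ for every $1\le i\le n$. With these choices the identity \eqref{19092902} becomes exactly the hypothesis $\prod_{i=1}^n t^i_{(p_1/q_1,\dots,p_n/q_n)}=\prod_{i=1}^n t^i_{(p'_1/q'_1,\dots,p'_n/q'_n)}$, and the largeness hypotheses match verbatim. Since each $a_i=1\neq 0$, the last assertion of Theorem~\ref{19072401} yields $p_i/q_i=p'_i/q'_i$ for every $i$, that is, $(p_1/q_1,\dots,p_n/q_n)=(p'_1/q'_1,\dots,p'_n/q'_n)$, as desired. (The non-symmetric cusp shape hypothesis on $\mathcal{M}$ is part of the standing assumptions and is exactly what makes Theorem~\ref{19072401} applicable.)

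The only point demanding a little care is the bookkeeping of indices: one must check that $t^i_{(p_1/q_1,\dots,p_n/q_n)}$ in the corollary is precisely the holonomy attached to the cusp filled along the slope $p_i/q_i$, so that applying Theorem~\ref{19072401} with $j_i=j_i'=i$ legitimately produces the matched conclusion $p_i/q_i=p'_i/q'_i$ rather than a permuted version of it. Granting that, there is no genuine obstacle here --- the corollary is simply the $a_i\equiv a_i'\equiv 1$ specialization of Theorem~\ref{19072401}, and all the real work sits in the proof of that theorem.
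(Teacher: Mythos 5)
Your proposal is correct and is exactly the paper's own route: the paper introduces Corollary~\ref{19092801} with the remark that it is the $a_i=a'_i=1$ specialization of Theorem~\ref{19072401}, which is precisely what you carry out (with $j_i=j'_i=i$), together with the trivial backward implication.
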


Corollary \ref{19092801} implies
\begin{corollary}\label{19080902}
Let $\mathcal{M}$ and $t^i_{(p_1/q_1,\dots, p_n/q_n)}$ be the same as above. For sufficiently large $|p_i|+|q_i|$ and $|p'_i|+|q'_i|$ ($1\leq i\leq n$), 
\begin{equation*}
\big\{t^{1}_{(p_1/q_1,\dots, p_n/q_n)}, \dots, t^{n}_{(p_1/q_1,\dots, p_n/q_n)}\big\}=\big\{t^{1}_{(p'_1/q'_1,\dots, p'_n/q'_n)}, \dots, t^{n}_{(p'_1/q'_1,\dots, p'_n/q'_n)}\big\}
\end{equation*}
 iff
\begin{equation*}
(p_1/q_1,\dots, p_n/q_n)=(p'_1/q'_1,\dots, p'_n/q'_n).
\end{equation*}
\end{corollary}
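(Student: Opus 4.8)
The plan is to obtain this formally from Corollary~\ref{19092801}, which already contains all of the analytic content. The ``only if'' direction requires nothing: if $(p_1/q_1,\dots,p_n/q_n)=(p_1'/q_1',\dots,p_n'/q_n')$ then the two Dehn fillings are the same manifold, so their sets of core-geodesic holonomies are literally identical. Thus the whole task is the ``if'' direction, and the idea is to deduce from the equality of the two sets an equality of the two products $\prod_i t^i$ and then invoke Corollary~\ref{19092801}.

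So suppose the two $n$-element lists of holonomies have the same underlying set. I would first record that, for $|p_i|+|q_i|$ (and $|p_i'|+|q_i'|$) sufficiently large, the holonomies $t^1_{(p_1/q_1,\dots,p_n/q_n)},\dots,t^n_{(p_1/q_1,\dots,p_n/q_n)}$ of a single filling are pairwise distinct; granting this, each of the two sets in the hypothesis has exactly $n$ elements, so the set equality upgrades to a bijection, i.e.\ there is $\sigma\in S_n$ with $t^i_{(p_1/q_1,\dots,p_n/q_n)}=t^{\sigma(i)}_{(p_1'/q_1',\dots,p_n'/q_n')}$ for every $i$. (If one reads the braces as a multiset, such a $\sigma$ is immediate and the distinctness step can be skipped.) Taking the product over $i$, and using that a finite product is unchanged by reindexing, yields
\[
\prod_{i=1}^{n}t^i_{(p_1/q_1,\dots,p_n/q_n)}=\prod_{i=1}^{n}t^i_{(p_1'/q_1',\dots,p_n'/q_n')},
\]
whereupon Corollary~\ref{19092801} gives $(p_1/q_1,\dots,p_n/q_n)=(p_1'/q_1',\dots,p_n'/q_n')$, as desired.

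The only step that is not bookkeeping is the pairwise distinctness of the holonomies of a single large filling, and I expect that to be the (mild) obstacle. One way I would argue it: if $p_i/q_i\neq p_j/q_j$, then $t^i\neq t^j$ follows from Theorem~\ref{19072401} applied with that filling on both sides and a single nontrivial factor on either side, since its ``moreover'' clause would otherwise force $p_i/q_i=p_j/q_j$; and if $p_i/q_i=p_j/q_j$ but $\tau_i\neq\tau_j$ (automatic under the non-symmetric cusp shapes hypothesis), then $t^i\neq t^j$ for large fillings because, by the quantitative form of Theorem~\ref{19092501}, the complex lengths of the two core geodesics tend to $0$ with different leading asymptotics in the filling slope. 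Beyond this there is no real difficulty: Corollary~\ref{19080902} is essentially the observation that an unordered tuple of distinct holonomies determines their product, and that product is exactly what Corollary~\ref{19092801} controls.
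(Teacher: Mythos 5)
Your proposal takes the same route the paper intends: reduce the set equality to an equality of products and then invoke Corollary~\ref{19092801}. The paper itself offers only the word ``implies'' here, so you are effectively supplying the bookkeeping it leaves out, and your observation that the set-to-product step genuinely needs the holonomies to be pairwise distinct is a correct and worthwhile point --- without distinctness, equality of the sets $\{t^1,\dots,t^n\}$ and $\{s^1,\dots,s^n\}$ does \emph{not} force $\prod t^i=\prod s^i$ (e.g.\ $(a,a,b)$ and $(a,b,b)$ have the same underlying set but different products). So the distinctness lemma is the real content of the deduction, and you were right to isolate it.

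On the distinctness argument itself, your case split works but is more elaborate than necessary, and the second case leans on an asymptotic form of Theorem~\ref{19092501} that the paper does not actually state. The cleaner path, entirely inside the paper, is Theorem~\ref{19100207}: for $|p_i|+|q_i|$ sufficiently large the holonomies of a single filling are \emph{multiplicatively} independent, and a coincidence $t^i=t^j$ with $i\neq j$ is exactly a nontrivial relation $t^i\,(t^j)^{-1}=1$, which that theorem forbids. This replaces both of your subcases (the appeal to the ``moreover'' clause of Theorem~\ref{19072401} and the appeal to the leading asymptotics of complex length) with a single citation, and it has the advantage of staying strictly within results proved in the paper. With that substitution your argument is complete and correct: distinctness gives $n$-element sets, hence a matching bijection $\sigma\in S_n$, hence equal products, hence by Corollary~\ref{19092801} equal filling coefficients; and the converse direction is, as you say, immediate.
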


As mentioned earlier, I. Agol will give a simple geometric proof of the above corollary in the appendix. See also \cite{FPS}.

Combining with Theorem \ref{19092501}, Corollary \ref{19080902} induces
\begin{corollary}
Let $\mathcal{M}$ be the same as above. For sufficiently large $|p_i|+|q_i|$ and $|p'_i|+|q'_i|$ ($1\leq i\leq n$), 
\begin{equation*}
\mathcal{M}_{(p_1/q_1,\dots, p_n/q_n)}\cong\mathcal{M}_{(p'_1/q'_1,\dots, p'_n/q'_n)}
\end{equation*} 
iff
\begin{equation*}
(p_1/q_1,\dots, p_n/q_n)=(p'_1/q'_1,\dots, p'_n/q'_n).
\end{equation*}
\end{corollary}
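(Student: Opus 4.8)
The plan is to deduce the corollary directly from Corollary \ref{19080902} together with part (1) of Theorem \ref{19092501}. The key point is that an isometry between two hyperbolic Dehn fillings must respect the geometric structure finely enough to identify core geodesics, and once the core geodesics are matched, their complex lengths (equivalently, the holonomies $t^i$) must agree, at which point Corollary \ref{19080902} forces the filling coefficients to coincide.

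First I would fix $|p_i|+|q_i|$ and $|p'_i|+|q'_i|$ large enough that both Theorem \ref{19092501}(1) and Corollary \ref{19080902} apply to all the fillings in question. Suppose $\varphi:\mathcal{M}_{(p_1/q_1,\dots,p_n/q_n)}\xrightarrow{\ \cong\ }\mathcal{M}_{(p'_1/q'_1,\dots,p'_n/q'_n)}$ is an isometry (by Mostow rigidity the homeomorphism in the statement can be promoted to an isometry, since both manifolds are closed hyperbolic $3$-manifolds; alternatively one argues with the homeomorphism type of the cusped pieces). Next, I would invoke Theorem \ref{19092501}(1): the $n$ shortest closed geodesics of each filled manifold are precisely its core geodesics. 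Since an isometry preserves lengths of closed geodesics and hence preserves the ordered list of the $n$ shortest ones, $\varphi$ carries the set of core geodesics of $\mathcal{M}_{(p_1/q_1,\dots,p_n/q_n)}$ bijectively onto the set of core geodesics of $\mathcal{M}_{(p'_1/q'_1,\dots,p'_n/q'_n)}$, preserving (complex) lengths.

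The holonomy $t^i_{(p_1/q_1,\dots,p_n/q_n)}$ (with $|t^i|>1$) records exactly the complex length of the $i$-th core geodesic: $\log t^i = \ell_i + i\theta_i$ where $\ell_i$ is the (real) length and $\theta_i$ the rotational part. Since $\varphi$ matches core geodesics preserving complex length, the multisets
\begin{equation*}
\big\{t^{1}_{(p_1/q_1,\dots, p_n/q_n)}, \dots, t^{n}_{(p_1/q_1,\dots, p_n/q_n)}\big\}
\quad\text{and}\quad
\big\{t^{1}_{(p'_1/q'_1,\dots, p'_n/q'_n)}, \dots, t^{n}_{(p'_1/q'_1,\dots, p'_n/q'_n)}\big\}
\end{equation*}
coincide. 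Corollary \ref{19080902} then yields $(p_1/q_1,\dots,p_n/q_n)=(p'_1/q'_1,\dots,p'_n/q'_n)$. The converse implication is trivial: equal coefficients give literally the same manifold. The main obstacle, and the only nontrivial input beyond the already-cited results, is justifying that the isometry (equivalently, by Mostow rigidity, the homeomorphism) sends core geodesics to core geodesics with matching complex lengths — this is exactly where Theorem \ref{19092501}(1) is essential, since a priori a homeomorphism need not send a core curve to a core curve, but the metric characterization as "one of the $n$ shortest geodesics" is isometry-invariant. One should also check the harmless point that the complex length, not merely the real length, is recorded by $t^i$ and matched by $\varphi$, so that no spurious coincidences of real lengths with different rotational parts can occur; but since Corollary \ref{19080902} is stated for the full holonomies $t^i\in\mathbb{C}$, matching complex lengths is precisely what is needed.
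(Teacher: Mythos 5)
Your proposal is correct and is essentially the argument the paper intends: the paper simply states that the corollary follows by ``combining'' Theorem \ref{19092501} with Corollary \ref{19080902}, and your write-up is a faithful expansion of that one-line derivation, using Mostow rigidity to upgrade the homeomorphism to an isometry, Theorem \ref{19092501}(1) to identify core geodesics as the $n$ shortest, and the preservation of complex length to match the holonomy multisets so that Corollary \ref{19080902} applies.
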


\subsection{Complex volume}
\subsubsection{A question of Gromov}
The following theorem is due to Thurston:
\begin{theorem}[Thurston]\label{19100202}
Let $\mathcal{M}$ be an $n$-cusped hyperbolic $3$-manifold. Then 
\begin{equation*}
\text{vol}\;\mathcal{M}_{(p_1/q_1, \dots, p_n/q_n)}<\text{vol}\;\mathcal{M}
\end{equation*}
for any $(p_1/q_1, \dots, p_n/q_n)$ and 
\begin{equation*}
\text{vol}\;\mathcal{M}_{(p_1/q_1, \dots, p_n/q_n)}\rightarrow \text{vol}\;\mathcal{M}
\end{equation*}
as $|p_i|+|q_i| \rightarrow \infty$ ($1\leq i\leq n$). 
\end{theorem}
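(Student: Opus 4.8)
The plan is to treat the two assertions separately, both through Thurston's hyperbolic Dehn surgery space of $\mathcal{M}$; write $\mathcal{M}_{(p/q)}$ for $\mathcal{M}_{(p_1/q_1,\dots,p_n/q_n)}$. Near the complete structure this space is parametrized by generalized Dehn surgery coefficients $u=(u_1,\dots,u_n)\in\mathbb{C}^n$, with $u=0$ the complete cusped structure $\mathcal{M}$ and the point $u^{(p/q)}$ determined by $p_iu_i+q_iv_i=2\pi\sqrt{-1}$ (here $v_i$ is the companion logarithmic holonomy) corresponding to $\mathcal{M}_{(p/q)}$; non-coprime or non-integer coefficients give hyperbolic cone structures along the core geodesics. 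Hyperbolic volume is a continuous, indeed real-analytic, function on this space: fixing an ideal triangulation $\mathcal{T}$ of $\mathcal{M}$ with shape parameters $z(u)$ solving Thurston's gluing equations, it equals $V(u)=\sum_j\mathcal{L}(z_j(u))$ with $\mathcal{L}$ the (signed) ideal-tetrahedron volume, and $V(0)=\mathrm{vol}\,\mathcal{M}$ (Neumann--Zagier).

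\noindent\textbf{Convergence.} Thurston's Dehn filling theorem gives $u^{(p/q)}\to 0$ as $|p_i|+|q_i|\to\infty$ — equivalently, the filled structures converge geometrically to the complete one on compact sets — so continuity of the volume function yields $\mathrm{vol}\,\mathcal{M}_{(p/q)}\to\mathrm{vol}\,\mathcal{M}$. If one prefers to avoid the surgery-space machinery here: for fixed small $\varepsilon$ the $\varepsilon$-thick part of $\mathcal{M}_{(p/q)}$ converges to that of $\mathcal{M}$, while the $\varepsilon$-thin parts — the Margulis tubes about the core geodesics on one side, the cusp neighbourhoods on the other — carry volume bounded by a quantity tending to $0$ with $\varepsilon$ uniformly in $(p/q)$, using Theorem \ref{19092501}(2); then let $\varepsilon\to 0$.

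\noindent\textbf{Strict inequality.} Fix a tuple with $\mathcal{M}_{(p/q)}$ hyperbolic and connect $\mathcal{M}$ to $\mathcal{M}_{(p/q)}$ through a one-parameter family $M_\alpha$ of hyperbolic cone structures on the filled manifold, with cone angle $\alpha\in[0,2\pi]$ along all $n$ core geodesics simultaneously, $M_0=\mathcal{M}$ (cusped, ``angle $0$'') and $M_{2\pi}=\mathcal{M}_{(p/q)}$. For $|p_i|+|q_i|$ large such a family exists because the whole arc stays inside Dehn surgery space near $u=0$; for general hyperbolic fillings it is supplied by the cone-deformation theory of Hodgson--Kerckhoff (and Kojima). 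Along it the Schläfli variation formula for hyperbolic cone $3$-manifolds reads
\begin{equation*}
\frac{d}{d\alpha}\,\mathrm{vol}(M_\alpha)=-\tfrac12\sum_{i=1}^n\ell_i(\alpha),
\end{equation*}
where $\ell_i(\alpha)>0$ is the length of the $i$-th singular geodesic of $M_\alpha$, and $\ell_i(\alpha)\to 0$ as $\alpha\to0^+$, so the right-hand side extends continuously to $[0,2\pi]$ and is strictly negative on $(0,2\pi]$. Integrating over $[0,2\pi]$ gives $\mathrm{vol}\,\mathcal{M}_{(p/q)}<\mathrm{vol}\,\mathcal{M}$.

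\noindent\textbf{The hard part.} The real content lies in the deformation inputs, not the differential-geometric bookkeeping. For the strict inequality in full generality one needs the cone-angle path to exist all the way to $\alpha=2\pi$ and to remain a smooth family of hyperbolic cone structures — local rigidity together with the absence of degeneration for cone angles $\le 2\pi$ — and this is where I expect the difficulty to concentrate; for large fillings it is elementary, since everything is a small perturbation of the complete structure. As a fallback for the \emph{non-strict} inequality one can bypass cone manifolds entirely: the degree-one map of pairs $(\hat{\mathcal{M}},\partial\hat{\mathcal{M}})\to(\mathcal{M}_{(p/q)},\bigsqcup_i\gamma_i)$ that pushes each boundary torus onto its core geodesic through the filling solid torus gives $\|\mathcal{M}_{(p/q)}\|\le\|\mathcal{M}\|$, whence $\mathrm{vol}\,\mathcal{M}_{(p/q)}\le\mathrm{vol}\,\mathcal{M}$ by Gromov's proportionality $\mathrm{vol}=v_3\|\cdot\|$ (with $v_3$ the volume of the regular ideal simplex); but making \emph{that} inequality strict is again delicate, so the Schläfli argument remains the cleaner route.
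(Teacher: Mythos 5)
The paper does not prove this statement --- it is quoted as Thurston's theorem (with \cite{nz}, \cite{thu} as background), so there is no in-paper argument to compare against. On its own merits your outline is a correct and standard modern treatment. The convergence claim is handled exactly as one should: continuity (indeed real-analyticity) of volume on Dehn surgery space plus $u^{(p/q)}\to 0$, and the thick/thin alternative you sketch using Theorem~\ref{19092501}(2) works just as well. For the strict inequality, the Schl\"afli variation $\frac{d}{d\alpha}\mathrm{vol}(M_\alpha)=-\tfrac12\sum_i\ell_i(\alpha)$ along a cone-angle path from $0$ to $2\pi$ is the cleanest known proof, and you have correctly located where the real content sits: the global existence of a nondegenerate family of hyperbolic cone structures for all $\alpha\in[0,2\pi]$ is a serious theorem (Hodgson--Kerckhoff, with Kojima's global rigidity below $\pi$), while for $|p_i|+|q_i|$ large the path stays inside Thurston's neighborhood of the complete structure and the issue disappears. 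Your Gromov-norm fallback via the degree-one map $(\hat{\mathcal{M}},\partial\hat{\mathcal{M}})\to(\mathcal{M}_{(p/q)},\bigsqcup_i\gamma_i)$ is closer in spirit to Thurston's original argument and cleanly gives the non-strict inequality; as you say, upgrading that to strictness requires ruling out a volume-preserving degree-one map that is not a homeomorphism, which is genuinely delicate, so the Schl\"afli route is the one to present. In short: no gaps, the load-bearing inputs are correctly identified, and the proposal is sound.
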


As a corollary, it follows that the number $N_{\mathcal{M}}(v)$ of hyperbolic Dehn fillings of $\mathcal{M}$ having the same volume $v$ is always finite. In 1979, Gromov raised the following question in his Bourbaki note \cite{gromov}:

\begin{question}[Gromov]\label{19100201}
For a given $n$-cusped hyperbolic $3$-manifold $\mathcal{M}$, does there exist $c$ such that 
\begin{equation*}
N_{\mathcal{M}}(v)<c
\end{equation*}
for any $v\in \mathbb{R}$?
\end{question}
Recently, the author answered the question partially as follows:
\begin{theorem}\cite{jeon3}
Let $\mathcal{M}$ be a $1$-cusped hyperbolic $3$-manifold whose cusp shape is quadratic. Then there exists $c$ depending only on $\mathcal{M}$ such that 
\begin{equation*}
N_{\mathcal{M}}(v)<c
\end{equation*}
for any $v\in \mathbb{R}$. 
\end{theorem}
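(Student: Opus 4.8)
The plan is to separate the Dehn fillings according to how close their volume is to $\mathrm{vol}\,\mathcal{M}$, and to push the Neumann--Zagier asymptotic expansion of the filled volume far enough that the quadraticity of the cusp shape turns ``two large fillings have equal volume'' into a condition with only boundedly many solutions.

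\emph{Reduction to large fillings.} By Theorem~\ref{19100202}, $\mathrm{vol}\,\mathcal{M}_{(p/q)}<\mathrm{vol}\,\mathcal{M}$ for every slope and $\mathrm{vol}\,\mathcal{M}_{(p/q)}\to\mathrm{vol}\,\mathcal{M}$ as $|p|+|q|\to\infty$. Hence, for a fixed $\varepsilon>0$, only finitely many fillings — say $C_{0}(\varepsilon)$ of them — have volume $\le\mathrm{vol}\,\mathcal{M}-\varepsilon$, and $N_{\mathcal{M}}(v)=0$ for $v\ge\mathrm{vol}\,\mathcal{M}$. Thus $N_{\mathcal{M}}(v)\le C_{0}(\varepsilon)$ whenever $v\le\mathrm{vol}\,\mathcal{M}-\varepsilon$, and it remains to bound $N_{\mathcal{M}}(v)$ uniformly for $v\in(\mathrm{vol}\,\mathcal{M}-\varepsilon,\mathrm{vol}\,\mathcal{M})$; for such $v$ every filling of volume $v$ has $|p|+|q|$ large — larger, once $\varepsilon$ is chosen small, than any bound needed below.

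\emph{The Neumann--Zagier expansion and where quadraticity enters.} Let $\tau$ be the cusp shape and, for a large slope $p/q$, put $w=p+q\tau$. Combining Thurston's deformation theory with the Neumann--Zagier analysis of Dehn filling, one has
\begin{equation*}
\mathrm{vol}\,\mathcal{M}_{(p/q)}=\mathrm{vol}\,\mathcal{M}-\frac{\pi^{2}\,\mathrm{Im}\,\tau}{|w|^{2}}+\Psi(w),\qquad \Psi(w)=O(|w|^{-4}),
\end{equation*}
where $\Psi$ is a real-analytic function of $w\in\mathbb{C}$ ($|w|$ large) whose expansion is governed by the fixed Neumann--Zagier data of $\mathcal{M}$. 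The structural input I would need is that this expansion is \emph{not radially symmetric} in $w$: along circles $|w|=R$ the first $\arg w$--dependent term of $\Psi$ has the form $R^{-k}g(\arg w)$ for some $k\ge4$ and some non-constant real-analytic $g\colon\mathbb{R}/2\pi\mathbb{Z}\to\mathbb{R}$ coming from a nonzero Neumann--Zagier coefficient of $\mathcal{M}$ (a radial expansion to all orders would force the longitude holonomy to equal $\tau u$ identically, impossible since the $A$--polynomial of $\mathcal{M}$ is a polynomial and $\tau$ is irrational). Now the hypothesis is used: since $\tau$ is quadratic, $w$ lies in the imaginary quadratic field $K=\mathbb{Q}(\tau)$, complex conjugation on $K$ is its nontrivial automorphism, and therefore
\begin{equation*}
|w|^{2}=w\overline{w}=N_{K/\mathbb{Q}}(w)=N(p+q\tau)\in\tfrac1B\mathbb{Z}
\end{equation*}
for a fixed $B=B(\mathcal{M})$, $N(p+q\tau)$ being a fixed positive-definite binary quadratic form in $(p,q)$.

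\emph{The counting step.} Fix $v\in(\mathrm{vol}\,\mathcal{M}-\varepsilon,\mathrm{vol}\,\mathcal{M})$ and a filling with $\mathrm{vol}\,\mathcal{M}_{(p'/q')}=v$, $w'=p'+q'\tau$ large. Solving the expansion above for $|w'|^{2}$ iteratively shows that $N(w')$ lies in an interval of bounded length centred at a value $N_{0}(v)$ depending only on $v$ and $\mathcal{M}$ (the length is positive when $k=4$ and shrinks to $0$ when $k>4$; in the latter case the reasoning below is only easier); since $N(w')\in\tfrac1B\mathbb{Z}$, it takes at most $s_{0}=s_{0}(\mathcal{M})$ distinct values. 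For each admissible value $m$, the point $w'$ lies on the circle $|w'|^{2}=m$, and on that circle $\arg w'\mapsto\mathrm{vol}\,\mathcal{M}_{(p'/q')}$ is, once $v$ is close enough to $\mathrm{vol}\,\mathcal{M}$ (so $m$ is large), an arbitrarily small perturbation — small together with its angular derivatives, using analytic control of the remainder in the Neumann--Zagier expansion — of a fixed non-constant real-analytic function proportional to $g$. Hence the equation $\mathrm{vol}\,\mathcal{M}_{(p'/q')}=v$ has at most $K=K(\mathcal{M})$ solutions in $\arg w'$: extend $g$ holomorphically to an annulus and bound, via Jensen's formula and Rouch\'{e}'s theorem, the number of zeros of $g-s$ and of its small analytic perturbations uniformly over $s$ in the relevant bounded range. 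Consequently at most $s_{0}K$ slopes $p'/q'$ give volume $v$ among the large fillings, and with the reduction,
\begin{equation*}
N_{\mathcal{M}}(v)\le\max\bigl(C_{0}(\varepsilon),\,s_{0}K\bigr)=:c
\end{equation*}
for all $v\in\mathbb{R}$, with $c$ depending only on $\mathcal{M}$.

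\emph{Main obstacle.} I expect the crux to be the structural fact in the second step: proving that the Neumann--Zagier expansion of $\mathrm{vol}\,\mathcal{M}_{(p/q)}$ in $w$ is genuinely non-radial, with an explicitly identifiable leading angular term originating from a nonzero Neumann--Zagier coefficient of $\mathcal{M}$, together with making the error control in that expansion precise enough (analytic, not merely $C^{0}$, in $\arg w$) to legitimize the perturbation bookkeeping in the last step. The role of the quadraticity hypothesis, by contrast, is completely transparent: it is exactly what forces $|w|^{2}=N_{K/\mathbb{Q}}(w)$ into the discrete set $\tfrac1B\mathbb{Z}$, so that the bounded-length interval produced by the non-radial term meets only finitely many admissible norm values. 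Without quadraticity the relevant norms vary continuously and the equality $\mathrm{vol}\,\mathcal{M}_{(p/q)}=\mathrm{vol}\,\mathcal{M}_{(p'/q')}$ must be attacked by genuinely Diophantine means, which is the route the present paper develops through core-geodesic holonomies.
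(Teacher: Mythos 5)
A preliminary remark on the comparison you were asked for: the present paper does not prove this theorem at all — it is quoted as background from \cite{jeon3} — so there is no in-paper argument to measure your proposal against, and what follows assesses the proposal on its own. Your overall scheme is reasonable and in the spirit of how one would expect quadraticity to be exploited: the Neumann--Zagier asymptotic $\mathrm{vol}\,\mathcal{M}_{p/q}=\mathrm{vol}\,\mathcal{M}-\pi^{2}\mathrm{Im}\,\tau/|p+q\tau|^{2}+O(|w|^{-4})$, the fact that for quadratic (hence imaginary quadratic) $\tau$ the quantity $|p+q\tau|^{2}=p^{2}+pq\,\mathrm{Tr}(\tau)+q^{2}N(\tau)$ lies in $\tfrac1B\mathbb{Z}$, the resulting bounded list of admissible norms for a given $v$, and then an angular zero-count on each norm circle. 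The reduction to large fillings and the ``bounded-length interval for $|w|^{2}$'' computation are correct.

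The genuine gap is exactly the one you flag, and it is not a technicality but the mathematical content of the theorem: you must actually produce, at some finite order $|w|^{-k}$, a non-constant angular term $g(\arg w)$ in the expansion of the \emph{real} volume, together with analytic (annulus-level, not merely $C^{0}$) control of the remainder uniformly in the norm. Without this the counting scheme collapses, because the fallback ``boundedly many norms times finitely many lattice points per circle'' is unavailable: the number of representations of an integer by a fixed binary quadratic form is not uniformly bounded, so a radially symmetric expansion would leave $N_{\mathcal{M}}(v)$ uncontrolled. Your justification for non-radiality only rules out the fully degenerate case $v=\tau u$; the statement actually available (Lemma \ref{NZ}, quoted in this paper from \cite{jeon3}) gives a nonzero higher coefficient $c_{2i-1}$ of the potential, but you still need the bridge from $c_{2i-1}\neq 0$ to a non-cancelling angular term in $\mathrm{Re}$ of the volume. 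That requires the explicit Neumann--Zagier/Yoshida expansion of $\mathrm{vol}$ (finer than the $o(1)$ statement of Theorem \ref{19102301}) in powers of $1/w$ and $1/\overline{w}$, evaluated along the Dehn filling parameters $u\sim 2\pi i/(p+q\tau)$, and a verification that the first angular contribution survives taking imaginary/real parts rather than cancelling; none of this is supplied. Secondary but necessary care: the uniform bound $K$ on solutions of $g(\theta)+\varepsilon(\theta)=s$ must hold uniformly for $s$ near critical values of $g$ and for the analytic perturbation $\varepsilon$, which is doable via Rouch\'e/Hurwitz on a complex annulus but needs to be argued, not asserted. As it stands, the proposal is a conditional outline whose key hypothesis is the hard step.
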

The \textit{volume} and \textit{Chern-Simons} (CS) invariant of $\mathcal{M}$ are often considered simultaneously as the imaginary and real parts of the same complex number, so one defines the \textit{complex volume} of $\mathcal{M}$ as\footnote{Note that CS $\mathcal{M}$ is well defined modulo $\pi^2\mathbb{Z}$.} 
\begin{equation*}
\text{vol}_{\mathbb{C}}\;\mathcal{M}:=\text{vol}\;\mathcal{M}+i\; \text{CS}\;\mathcal{M} \mod i\pi^2\mathbb{Z}.
\end{equation*}
Let $N_{\mathcal{M}}^{\mathbb{C}}(v)$ be the number of Dehn fillings of $\mathcal{M}$ whose complex volumes are equal to $v\in \mathbb{C}$ (modulo $i\pi^2\mathbb{Z}$). Then one may ask the following natural analogue of Question \ref{19100201}. 
 
\begin{question}
Let $\mathcal{M}$ be an $n$-cusped hyperbolic $3$-manifold. Does there exist $c$ such that 
\begin{equation*}
N_{\mathcal{M}}^{\mathbb{C}}(v)<c
\end{equation*}
for any $v\in \mathbb{C}$?
\end{question}

A quantitative approach to Theorem \ref{19100202} was obtained by Neumann-Zagier in \cite{nz}, and later it was further generalized by T. Yoshida \cite{Y}.
\begin{theorem}[Neumann-Zagier, Yoshida]\label{19102301}
Let $\mathcal{M}$ be an $n$-cusped hyperbolic $3$-manifold. Then 
\begin{equation*}
\text{vol}_{\mathbb{C}}\;\mathcal{M}_{(p_1/q_1, \dots, p_n/q_n)}\equiv\text{vol}_{\mathbb{C}}\;\mathcal{M}-\frac{\pi}{2}\log\Big(\prod_{i=1}^n t^i_{(p_1/q_1,\dots, p_n/q_n)}\Big)+\epsilon(p_1/q_1, \dots, p_n/q_n)\quad \mod i\pi^2\mathbb{Z}
\end{equation*}
where $\epsilon(p_1/q_1, \dots, p_n/q_n)\rightarrow0$ as $|p_i|+|q_i|\rightarrow \infty$ ($1\leq i\leq n$). 
\end{theorem}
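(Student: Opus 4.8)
The plan is to combine the Neumann--Zagier description of the deformation space of $\mathcal{M}$ with Neumann's extended--Bloch--group formula for complex volume, and then to compare the filled manifold with the appropriate incomplete structure on $\mathcal{M}$. First I fix a geometric ideal triangulation $\mathcal{T}$ of $\mathcal{M}$ with $N$ tetrahedra; by Neumann's theorem $\mathrm{vol}_{\mathbb{C}}\,\mathcal{M}\equiv\tfrac{1}{i}\sum_{j=1}^N\widehat L(\hat{z}_j)\pmod{i\pi^2\mathbb{Z}}$, where $\widehat L$ is the extended Rogers dilogarithm on the extended Bloch group $\widehat{\mathcal{B}}(\mathbb{C})$ and the $\hat{z}_j$ are the shape parameters at the complete structure together with a combinatorial flattening of $\mathcal{T}$. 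Since $\mathcal{T}$ stays geodesic under small deformations, it yields a holomorphic family of (generally incomplete) structures on $\mathcal{M}$; I parametrise them by the vector $u=(u_1,\dots,u_n)$ of logarithmic meridian holonomies, with $u=0$ the complete structure, and write $v_i=v_i(u)$ for the logarithmic holonomy of the $i$-th longitude, so that $v_i(0)=0$ and, by Neumann--Zagier, $v_i(u)=\tau_i u_i+O(|u|^3)$. Let $\widehat V(u):=\tfrac{1}{i}\sum_j\widehat L(\hat{z}_j(u))$ be the complex volume of the structure indexed by $u$.

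Next I use the Neumann--Zagier potential property: differentiating $\widehat V$ and feeding in the gluing equations together with the Neumann--Zagier symplectic lemma for $\mathcal{T}$ (symmetry of the Jacobian $\partial v_i/\partial u_j$, equivalently closedness of $\sum_i v_i\,du_i$) gives $\partial\widehat V/\partial u_i=c\,v_i(u)$ for a constant $c$ depending only on the normalisations. Since the potential is even in $u$, this integrates to
\[
\widehat V(u)=\mathrm{vol}_{\mathbb{C}}\,\mathcal{M}+\tfrac{c}{2}\sum_i\tau_i u_i^2+O(|u|^4)=\mathrm{vol}_{\mathbb{C}}\,\mathcal{M}+O(|u|^2).
\]
Then I locate the filling: the $(p_1/q_1,\dots,p_n/q_n)$-filling is the unique small solution $u^\ast$ of $p_iu_i^\ast+q_iv_i^\ast=2\pi i$ for $1\le i\le n$ (for $|p_i|+|q_i|$ large this lies in the region where $\mathcal{T}$ is geodesic). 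Choosing $(r_i,s_i)\in\mathbb{Z}^2$ with $p_is_i-q_ir_i=1$, the complex length of the $i$-th core geodesic is $\ell_i=\pm(r_iu_i^\ast+s_iv_i^\ast)$, well defined modulo $2\pi i$, and the branch with $\mathrm{Re}\,\ell_i>0$ is $\log t^i_{(p_1/q_1,\dots,p_n/q_n)}$. By Theorem~\ref{19092501}, $u^\ast\to0$ and $\mathrm{Re}\,\ell_i\to0$ as $|p_i|+|q_i|\to\infty$, so $O(|u^\ast|^2)=o(1)$; note that $\log\prod_i t^i$ itself need not vanish in the limit, since the imaginary parts of the $\ell_i$ stay $O(1)$.

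Finally I compute the surgery correction. The metric completion of the structure $\mathcal{M}_{u^\ast}$ is exactly $\mathcal{M}_{(p_1/q_1,\dots,p_n/q_n)}$, obtained by adjoining the $n$ core geodesics, so the Riemannian volumes agree and only the Chern--Simons part is affected. To evaluate the change inside $\widehat{\mathcal{B}}(\mathbb{C})$, I refine $\mathcal{T}$ near each core to a flattened triangulation of $\mathcal{M}_{(p_1/q_1,\dots,p_n/q_n)}$ and track how the flattening data change across each filling solid torus. By Yoshida's computation the difference splits into $n$ local terms, one per core, and the term attached to a core of complex length $\ell_i$ is $-\tfrac{\pi}{2}\ell_i$ up to an error tending to $0$ as the embedded tube about the core grows (hence, by Thurston's theory, as $|p_i|+|q_i|\to\infty$). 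This gives
\[
\mathrm{vol}_{\mathbb{C}}\,\mathcal{M}_{(p_1/q_1,\dots,p_n/q_n)}\equiv\widehat V(u^\ast)-\frac{\pi}{2}\log\Big(\prod_{i=1}^n t^i_{(p_1/q_1,\dots,p_n/q_n)}\Big)+\epsilon'\pmod{i\pi^2\mathbb{Z}}
\]
with $\epsilon'\to0$; absorbing $\widehat V(u^\ast)-\mathrm{vol}_{\mathbb{C}}\,\mathcal{M}=O(|u^\ast|^2)$ into the error and using Theorem~\ref{19092501} to see that $\epsilon(p_1/q_1,\dots,p_n/q_n):=\epsilon'+\widehat V(u^\ast)-\mathrm{vol}_{\mathbb{C}}\,\mathcal{M}$ tends to $0$ yields the claim.

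The hard part is the surgery correction: not merely bounding its size, but showing its leading term is $-\tfrac{\pi}{2}\log t^i$ with a vanishing remainder. The real part is handled by the locality of volume (already Neumann--Zagier's quantitative form of Theorem~\ref{19100202}), but the Chern--Simons invariant is global — defined only mod $\pi^2\mathbb{Z}$ and dependent on a trivialisation — so one must check that the flattening forced near a core by the normalisation $|t^i|>1$ is precisely the one producing $-\tfrac{\pi}{2}\ell_i$, and that the $i\pi^2\mathbb{Z}$ indeterminacy (and the $2\pi i$ ambiguity in each $\ell_i$) can be carried coherently along the entire deformation. This upgrade from the volume identity to the complex-volume identity is exactly Yoshida's contribution.
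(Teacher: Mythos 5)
The paper does not supply a proof of this theorem: it is stated with the attribution ``Neumann--Zagier, Yoshida'' and cited from [nz] and [Y], so there is no internal argument to compare yours against. Taken on its own terms, your sketch is a reasonable reconstruction of how those references yield the statement: define a holomorphic $\widehat V(u)$ on the deformation variety via the extended Rogers dilogarithm, with $\widehat V(0)=\text{vol}_{\mathbb{C}}\,\mathcal{M}$; show $\nabla\widehat V(0)=0$ from the Neumann--Zagier symplectic relations; and account for the surgery defect $-\tfrac{\pi}{2}\log t^i$ per core geodesic. Your closing paragraph is also right that the genuine content (Yoshida's contribution, completed by Neumann's extended Bloch group formalism) is the Chern--Simons half of that correction, including carrying the $i\pi^2\mathbb{Z}$ and $2\pi i$ ambiguities coherently through the flattenings.

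Two small inaccuracies are worth flagging. First, the claim $\partial\widehat V/\partial u_i = c\,v_i(u)$ is stronger than what the symplectic lemma gives directly; the one-form obtained by differentiating $\widehat V$ and imposing the gluing equations has the shape $\sum_i(u_i\,dv_i - v_i\,du_i)$ up to normalization, so the derivative carries $u_i$-terms as well as $v_i$-terms. This does not hurt the argument: all you actually use is $\nabla\widehat V(0)=0$, which still holds because $u_i=v_i=0$ at the complete structure, and this already gives $\widehat V(u^\ast)-\text{vol}_{\mathbb{C}}\,\mathcal{M}=O(|u^\ast|^2)\to 0$. Second, in Neumann's formulation the surgery identity $\text{vol}_{\mathbb{C}}\,\mathcal{M}_{(p_1/q_1,\dots,p_n/q_n)} \equiv \widehat V(u^\ast) - \tfrac{\pi}{2}\sum_i \log t^i \pmod{i\pi^2\mathbb{Z}}$ is exact, not merely true up to an error that vanishes as the tube radii grow, so your $\epsilon'$ is identically zero and the entire $\epsilon$ of the theorem statement comes from $\widehat V(u^\ast)-\text{vol}_{\mathbb{C}}\,\mathcal{M}$. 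Your weaker formulation is of course still sufficient for the conclusion, but it mildly understates what the references prove.
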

By the above theorem, $\log\Big(\prod_{i=1}^n t^i_{(p_1/q_1,\dots, p_n/q_n)}\Big)$ is the majority part of the complex volume of $\mathcal{M}$. Now we define the \textit{pseudo complex volume} of $\mathcal{M}_{(p_1/q_1, \dots, p_n/q_n)}$ as
\begin{equation*}
\text{pvol}_{\mathbb{C}}\;\mathcal{M}_{(p_1/q_1, \dots, p_n/q_n)}:=\text{vol}_{\mathbb{C}}\;\mathcal{M}-\frac{\pi}{2}\log\Big(\prod_{i=1}^n t^i_{(p_1/q_1,\dots, p_n/q_n)}\Big)\mod i\pi^2\mathbb{Z}.
\end{equation*} 
Then Corollary \ref{19092801} immediately implies
\begin{theorem}
Let $\mathcal{M}$ be an $n$-cusped hyperbolic $3$-manifold having non-symmetric cusp shapes. Then any Dehn filling of $\mathcal{M}$ with sufficiently large coefficient is uniquely determined by its pseudo complex volume. 
\end{theorem}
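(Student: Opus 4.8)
The plan is simply to unwind the definition of pseudo complex volume and reduce the statement to Corollary~\ref{19092801}. Suppose $(p_1/q_1,\dots,p_n/q_n)$ and $(p'_1/q'_1,\dots,p'_n/q'_n)$ are two tuples of Dehn filling coefficients, both with $|p_i|+|q_i|$ and $|p'_i|+|q'_i|$ ($1\leq i\leq n$) large enough for Corollary~\ref{19092801} to apply, and assume
\[
\text{pvol}_{\mathbb{C}}\;\mathcal{M}_{(p_1/q_1,\dots,p_n/q_n)}\equiv\text{pvol}_{\mathbb{C}}\;\mathcal{M}_{(p'_1/q'_1,\dots,p'_n/q'_n)}\mod i\pi^2\mathbb{Z}.
\]
Since the summand $\text{vol}_{\mathbb{C}}\;\mathcal{M}$ occurring in both sides is a fixed constant (mod $i\pi^2\mathbb{Z}$), subtracting it gives
\[
\frac{\pi}{2}\log\Big(\prod_{i=1}^n t^i_{(p_1/q_1,\dots,p_n/q_n)}\Big)\equiv\frac{\pi}{2}\log\Big(\prod_{i=1}^n t^i_{(p'_1/q'_1,\dots,p'_n/q'_n)}\Big)\mod i\pi^2\mathbb{Z}.
\]

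Next I would divide this congruence by $\pi/2$, which turns the ambiguity $i\pi^2\mathbb{Z}$ into $2\pi i\mathbb{Z}$, so that
\[
\log\Big(\prod_{i=1}^n t^i_{(p_1/q_1,\dots,p_n/q_n)}\Big)\equiv\log\Big(\prod_{i=1}^n t^i_{(p'_1/q'_1,\dots,p'_n/q'_n)}\Big)\mod 2\pi i\mathbb{Z},
\]
and then exponentiate both sides; exponentiation kills the $2\pi i\mathbb{Z}$ indeterminacy and yields the genuine equality
\[
\prod_{i=1}^n t^i_{(p_1/q_1,\dots,p_n/q_n)}=\prod_{i=1}^n t^i_{(p'_1/q'_1,\dots,p'_n/q'_n)}.
\]
The same computation, run in reverse, also confirms that $\text{pvol}_{\mathbb{C}}$ is well defined mod $i\pi^2\mathbb{Z}$ independently of the chosen branch of $\log$ (changing a branch alters $-\tfrac{\pi}{2}\log$ by an element of $i\pi^2\mathbb{Z}$), so that the passage from the holonomy product to the pseudo complex volume loses no information. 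Applying Corollary~\ref{19092801} to the displayed equality of holonomy products immediately gives $(p_1/q_1,\dots,p_n/q_n)=(p'_1/q'_1,\dots,p'_n/q'_n)$, which is exactly the assertion that the Dehn filling is determined by its pseudo complex volume; the reverse implication is trivial since $\text{pvol}_{\mathbb{C}}\;\mathcal{M}_{(p_1/q_1,\dots,p_n/q_n)}$ is by construction a function of the filling.

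Since every step is an elementary manipulation once Corollary~\ref{19092801} is in hand, I do not expect any real obstacle in this argument. The only point requiring a little care is the bookkeeping of the two ambiguities -- the $i\pi^2\mathbb{Z}$ indeterminacy of the complex volume and the $2\pi i\mathbb{Z}$ indeterminacy of the logarithm -- and checking that the factor $\pi/2$ in the definition of $\text{pvol}_{\mathbb{C}}$ matches them up exactly, so that the reduction to Corollary~\ref{19092801} is lossless. All of the genuine work is, of course, concealed in Theorem~\ref{19072401} and Corollary~\ref{19092801}, whose proofs form the substance of the paper.
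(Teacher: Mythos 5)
Your argument is exactly what the paper leaves implicit when it says that Corollary~\ref{19092801} "immediately implies" the theorem: you unwind the definition of $\text{pvol}_{\mathbb{C}}$, cancel the constant $\text{vol}_{\mathbb{C}}\,\mathcal{M}$, and convert the $i\pi^2\mathbb{Z}$ congruence into equality of the holonomy products via exponentiation, then invoke the corollary. The approach and bookkeeping match the intended reduction, so the proposal is correct and takes the same route.
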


The following conjecture is now plausible:
\begin{conjecture}
Let $\mathcal{M}$ be an $n$-cusped hyperbolic $3$-manifold having non-symmetric cusp shapes. Then 
\begin{equation*}
N_{\mathcal{M}}^{\mathbb{C}}(v)=1
\end{equation*} 
for any $v\in \mathbb{C}$ sufficiently close to $\text{vol}_{\mathbb{C}}\;\mathcal{M}$ (modulo $i\pi^2\mathbb{Z}$). 
\end{conjecture}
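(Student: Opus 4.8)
The strategy is to combine the Neumann--Zagier--Yoshida expansion (Theorem \ref{19102301}) with Corollary \ref{19092801}, so that the whole statement rests on a single precise fact about the correction term $\epsilon$.

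First, using Theorem \ref{19100202} together with the strict decrease of Gromov norm under Dehn filling, one reduces the problem to fillings all of whose coefficients exceed a threshold $T$: any filling with some $|p_i|+|q_i|\leq T$ has volume at most $\text{vol}\,\mathcal{M}-\delta$ for a fixed $\delta>0$, and since reduction modulo $i\pi^2\mathbb{Z}$ leaves the real part unchanged, for $v$ close to $\text{vol}_{\mathbb{C}}\mathcal{M}$ only fillings with every $|p_i|+|q_i|>T$ can have $\text{vol}_{\mathbb{C}}\equiv v$; enlarging $T$ we may also assume Theorem \ref{19102301} and Corollary \ref{19092801} apply and that the holonomies $t^i$ and the errors $\epsilon$ below are small. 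Now take two such fillings $\mathcal{M}_{(p/q)}$ and $\mathcal{M}_{(p'/q')}$ (where $(p/q)$ abbreviates $(p_1/q_1,\dots,p_n/q_n)$) with $\text{vol}_{\mathbb{C}}\mathcal{M}_{(p/q)}\equiv\text{vol}_{\mathbb{C}}\mathcal{M}_{(p'/q')}\pmod{i\pi^2\mathbb{Z}}$. Subtracting the two instances of Theorem \ref{19102301} gives
\begin{equation*}
-\frac{\pi}{2}\Big(\log\prod_{i} t^i_{(p/q)}-\log\prod_{i} t^i_{(p'/q')}\Big)\equiv\epsilon(p'/q')-\epsilon(p/q)\pmod{i\pi^2\mathbb{Z}},
\end{equation*}
and since $t^i\to1$ and $\epsilon\to0$ as the coefficients grow, both sides lie genuinely near $0\in\mathbb{C}$ once $T$ is large, so the congruence becomes an honest equality in $\mathbb{C}$. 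By Corollary \ref{19092801} it then suffices to upgrade this equality to $\prod_i t^i_{(p/q)}=\prod_i t^i_{(p'/q')}$, i.e. to show that the small quantity $\epsilon(p'/q')-\epsilon(p/q)$ vanishes.

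For $n=1$ this looks within reach: there $u\mapsto\log t_{(p/q)}$ is a local biholomorphism near the complete structure $u=0$ (its derivative being $-(r+s\tau)\neq0$, since $\tau\notin\mathbb{R}$ and $(r,s)\neq(0,0)$), so, assuming that near $u=0$ the quantity $\epsilon$ depends analytically only on $\log t$, Theorem \ref{19102301} presents the complex-volume difference as $g(\log t)$ with $g$ analytic near $0$, $g(0)=0$ and $g'(0)\neq0$ (indeed $g'(0)=-\tfrac{\pi}{2}$ once one knows $\epsilon$ vanishes to second order at $u=0$); then $g$ is injective near $0$, forcing $\log t_{(p/q)}=\log t_{(p'/q')}$, and Corollary \ref{19092801} concludes.

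The real obstacle, and the reason the statement remains a conjecture, is the case $n\geq2$. There $\log\prod_i t^i=\sum_i\ell_i$ is a \emph{single} function on the $n$-dimensional deformation space, the correction $\epsilon$ is not a function of it, and the complex volume carries strictly more information than $\prod_i t^i$, so $\log\prod_i t^i$ cannot simply be read off the complex-volume equality. To proceed one would use the full Neumann--Zagier--Yoshida expansion to write $\epsilon(p/q)$ as an explicit convergent series in the $\ell_i$ (with leading term a quadratic form built from the Neumann--Zagier datum of $\mathcal{M}$), and then mount an arithmetic argument in the spirit of Theorem \ref{19072401}, exploiting the non-symmetry of the cusp shapes and the Diophantine behaviour of the functions $\ell_i$ as $|p_i|+|q_i|\to\infty$, to force $\epsilon(p'/q')-\epsilon(p/q)=-\tfrac{\pi}{2}\big(\log\prod_i t^i_{(p/q)}-\log\prod_i t^i_{(p'/q')}\big)$ to vanish. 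A naive size estimate will not suffice, since for distinct fillings both sides are typically comparable in magnitude to the corrections $\epsilon$ themselves; separating them seems to require exactly the Diophantine refinement underlying Theorem \ref{19072401}, now applied to this complex-volume-weighted combination of holonomies rather than to a bare product. Effecting that transfer is the crux, and I do not see how to carry it out with only the results recalled above.
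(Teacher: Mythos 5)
The statement you are attacking is labeled a \emph{Conjecture} in the paper and is offered without proof: the author records it merely as a plausible strengthening of the theorem that precedes it (that the pseudo complex volume determines the filling), and Section \ref{19101404} proves a different statement (linear independence of pseudo complex volumes), not this one. So there is no paper proof to compare against. Read as an analysis of the obstruction, your write-up is essentially right in spirit: subtracting two instances of the Neumann--Zagier--Yoshida expansion reduces the conjecture to showing that equality of the small quantities $\epsilon(p'/q')-\epsilon(p/q)$ and $-\tfrac{\pi}{2}\bigl(\log\prod_i t^i_{(p/q)}-\log\prod_i t^i_{(p'/q')}\bigr)$ forces both to vanish, after which Corollary \ref{19092801} finishes; and you correctly observe that for $n\geq2$ the complex volume carries strictly more information than $\prod_i t^i$, so the error term cannot simply be absorbed. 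You are also right, and honest, that the missing ingredient is not supplied by anything recalled in the paper.

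The one place you overclaim is the $n=1$ case, which you call ``within reach.'' There ``$u\mapsto \log t_{(p/q)}$'' is not a single holomorphic function on the deformation curve: for the filling point with slope $(p,q)$ one has $\log t=s u + r v(u)$ where $(s,r)$ is chosen so that $pr-qs=1$, so $\log t$ depends on the pair $(u,(p,q))$, not on $u$ alone, and the derivative you want is $s+r\tau$, not $-(r+s\tau)$. Since each Dehn filling contributes a single point $u(p,q)\to 0$ rather than a curve, the object you need to control is the injectivity of $\text{vol}_{\mathbb{C}}$ restricted to this discrete set, not local injectivity of some analytic function of $\log t$; and the hypothesis you then insert (``$\epsilon$ depends analytically only on $\log t$'') is itself a substantial unproved assertion, since the NZ expansion expresses $\epsilon$ as an analytic function of $u$ with the slope-dependent $\log t$ subtracted off, which is not a priori a function of $\log t$. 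So the $n=1$ case is as open as $n\geq2$; your bottom-line diagnosis is right, but the dichotomy between the two cases that you draw is not.
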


\subsubsection{Linear independence of pseudo complex volumes over $\mathbb{Q}$}
If a hyperbolic $3$-manifold $\mathcal{M}$ is a covering space of $\mathcal{N}$, then it is well-known that $\text{vol}\;\mathcal{M}$ is a constant multiple of $\text{vol}\;\mathcal{N}$. More generally, if two hyperbolic $3$-manifolds belong to the same commensurability class, then their volumes are rationally related. 

The following problem was proposed by Thurston in \cite{thu2} and has been remained unsolved:  
\begin{problem}[Thurston]
Show that volumes of hyperbolic 3-manifolds are not all rationally related.
\end{problem}

We consider an analogous problem for the pseudo complex volume case and get the following theorem. This is the second main result of the paper. 
\begin{theorem}\label{19090606}
Let $\mathcal{M}$ be a $1$-cusped hyperbolic $3$-manifold having non-quadratic cusp shape and $\mathcal{M}_{p/q}$ be its $p/q$-Dehn filling. For $n>0$, if
\begin{equation*}
\{p_1/q_1, \dots, p_n/q_n\}
\end{equation*}
is $n$ distinct Dehn filling coefficients with $|p_i|+|q_i|$ ($1\leq i\leq n$) sufficiently large, then there are no $a_i\in \mathbb{Q}\backslash\{0\}$ ($1\leq i\leq n$) such that 
\begin{equation*}
a_1\text{pvol}_{\mathbb{C}}\;\mathcal{M}_{p_1/q_1}+\cdots+a_n\text{pvol}_{\mathbb{C}}\;\mathcal{M}_{p_n/q_n}\equiv 0\mod i\pi^2\mathbb{Z}.
\end{equation*}  
\end{theorem}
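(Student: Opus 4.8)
The plan is to translate the alleged $\mathbb{Q}$-linear relation among pseudo complex volumes into a multiplicative relation among the core-geodesic holonomies, and then to exclude that relation by combining a soft estimate on absolute values — exploiting that core lengths tend to $0$ while $\text{vol}\,\mathcal{M}\neq0$ — with the multiplicative rigidity of Theorem~\ref{19072401} and, in the essential case, the Diophantine estimates underlying it.

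Fix a representative $V\in\mathbb{C}$ of $\text{vol}_{\mathbb{C}}\,\mathcal{M}$, write $t_i:=t^1_{p_i/q_i}$ for the core holonomy of $\mathcal{M}_{p_i/q_i}$, and set $S:=\sum_{i=1}^n a_i$. Since $\text{pvol}_{\mathbb{C}}\,\mathcal{M}_{p_i/q_i}\equiv V-\tfrac{\pi}{2}\log t_i\pmod{i\pi^2\mathbb{Z}}$, dividing the relation $\sum_i a_i\,\text{pvol}_{\mathbb{C}}\,\mathcal{M}_{p_i/q_i}\equiv0\pmod{i\pi^2\mathbb{Z}}$ by $-\pi/2$ and exponentiating (the principal branch of $\log$ applies, each $t_i$ being close to $1$ by Theorem~\ref{19092501}) shows it is \emph{equivalent} to
\[
\prod_{i=1}^n t_i^{\,a_i}=\exp\!\Big(\tfrac{2S}{\pi}\,V\Big).
\]
The left-hand side is an algebraic number, being a rational power product of eigenvalues of holonomies of closed geodesics in hyperbolic $3$-manifolds. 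Taking absolute values, with $|t_i|=e^{\ell(p_i/q_i)}$ and $\ell(p_i/q_i)$ the $i$-th core length, the relation forces $\sum_i a_i\,\ell(p_i/q_i)=\tfrac{2S}{\pi}\,\text{vol}\,\mathcal{M}$. Two easy sub-cases follow. If all $a_i$ have the same sign, then $\big|\sum_i a_i\,\ell(p_i/q_i)\big|\le|S|\max_i\ell(p_i/q_i)$, so $\max_i\ell(p_i/q_i)\ge\tfrac{2}{\pi}\text{vol}\,\mathcal{M}$, contradicting $\ell(p_i/q_i)\to0$ (Theorem~\ref{19092501}(2)) once the coefficients are large. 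If $S=0$, the right-hand side is $1$, so raising to a common denominator gives $\prod_i t_i^{\,b_i}=1$ with all $b_i\in\mathbb{Z}\setminus\{0\}$: a nontrivial multiplicative relation among the core holonomies of $n$ distinct large fillings of $\mathcal{M}$. For $n=2$ this contradicts Theorem~\ref{19072401} at once ($t_1/t_2$ would be a root of unity, so $t_1^{k}=t_2^{k}$ for some $k\neq0$, forcing $p_1/q_1=p_2/q_2$); for general $n$ it is the one-cusped, many-filling analogue of that statement and is obtained by the same arguments.

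The remaining case — $S\neq0$ with the $a_i$ of mixed sign and possibly of unbounded size — is the crux, and no soft argument reaches it: a monomial $\prod_i t_i^{\,b_i}$ with bounded exponents already tends to $1$, so crude estimates cannot separate the two sides, and the threshold ``$|p_i|+|q_i|$ sufficiently large'' must be chosen uniformly over \emph{all} coefficient tuples, in particular arbitrarily large ones. Here one invokes the Neumann--Zagier analysis: $\log t_{p/q}=F(u_{p/q})$ for a fixed holomorphic germ $F$ with $F(0)=0$ and $F'(0)\neq0$, where $u_{p/q}$ solves the Dehn filling equation, so that $u_{p/q}\to0$ and $u_{p/q}\sim 2\pi i/(p+q\tau)$ ($\tau$ the cusp shape). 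The relation becomes $\sum_i a_i\,F(u_{p_i/q_i})\equiv\tfrac{2S}{\pi}V\pmod{2\pi i\mathbb{Z}}$, and one feeds this, together with the non-quadraticity of $\tau$ (which makes the values $u_{p_i/q_i}$, equivalently the numbers $(p_i+q_i\tau)^{-1}$, sufficiently generic), into the height / unlikely-intersections machinery developed for Theorem~\ref{19072401}: for large distinct $(p_i,q_i)$ no such affine relation with rational coefficients can hold unless $S=0$, returning us to the multiplicative case, and then unless every $a_i=0$. Making this last step effective and uniform in the $a_i$ is the main obstacle, and it is exactly the quantitative phenomenon to which the bulk of the paper is devoted, now applied with $n$ distinct fillings of a single one-cusped manifold in place of one filling of an $n$-cusped one.
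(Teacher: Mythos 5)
Your reduction to the multiplicative form $\prod_i t_i^{a_i}=\exp(\tfrac{2S}{\pi}V)$ is exactly right, and the absolute-value observation neatly disposes of the case where all $a_i$ share a sign. But the two cases you single out as remaining are where the actual content of the paper's argument lies, and your proposal does not supply it.

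For the case $S=0$, you assert that a nontrivial relation $\prod_i t_i^{b_i}=1$ among core holonomies of $n$ distinct large fillings of a \emph{one-cusped} $\mathcal{M}$ is handled ``by the same arguments'' as Theorem~\ref{19072401}. This is not so. The paper carries out those arguments on $\mathcal{X}^{(n)}:=\mathcal{X}\times\cdots\times\mathcal{X}$, whose cusp shapes are all equal to $\tau$; this violates the non-symmetric hypothesis on which Lemma~\ref{19071803} (the structural input to Theorem~\ref{19072401}) rests, since that lemma uses Lemma~\ref{clai} and requires $1,\tau_i,\tau_j,\tau_i\tau_j$ linearly independent for $i\neq j$. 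The paper therefore proves a \emph{different} structure lemma (Lemma~\ref{19100901}): when the cusp shapes coincide, the rank-one linear condition on the Jacobian alone is not enough, and one must use the nonvanishing of the cubic term in the Neumann--Zagier expansion (Lemma~\ref{NZ}) to force the codimension-two anomalous cosets into the form $M_i=L_i=1$ or $M_iM_j^{\mp1}=L_iL_j^{\mp1}=1$. This yields Lemma~\ref{19091508}: a rational dependence among the holonomies of a single large Dehn filling point of $\mathcal{X}^{(n)}$ forces two of the slopes to coincide. This lemma is the actual engine of the proof and is absent from your sketch; your $n=2$ reduction to Theorem~\ref{19072401} is also shaky for the same reason.

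For the case $S\neq0$ with mixed signs, you explicitly acknowledge you cannot close it, and gesture at ``the machinery developed for Theorem~\ref{19072401}.'' But the machinery there is multiplicative, while the obstruction here is the transcendental constant $\text{vol}\,\mathcal{M}$ sitting on the right-hand side. The paper's key move is to take \emph{two} tuples $i\neq j$ from the putative infinite family of counterexamples and cross-multiply the relations
\begin{equation*}
\text{vol}\,\mathcal{M}\,\Bigl(\sum_k a_{ik}\Bigr)-\tfrac{\pi}{2}\sum_k a_{ik}\log t_{p_{ik}/q_{ik}}\equiv 0
\end{equation*}
for $i$ and $j$, which eliminates $\text{vol}\,\mathcal{M}$ and yields a genuine multiplicative relation
\begin{equation*}
\Bigl(\prod_{k}t^{a_{ik}}_{p_{ik}/q_{ik}}\Bigr)^{\sum_k a_{jk}}=\Bigl(\prod_{k}t^{a_{jk}}_{p_{jk}/q_{jk}}\Bigr)^{\sum_k a_{ik}}
\end{equation*}
among $2n$ holonomies, to which Lemma~\ref{19091508} (applied with $\mathcal{X}^{(2n)}$) again applies. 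Without this elimination trick, the $S\neq0$ case does not reduce to unlikely intersections at all; it is not merely ``effective/uniform bookkeeping'' as your proposal suggests, but a genuinely missing idea.
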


Inspired from this, we propose the following conjecture:
\begin{conjecture}
Let $\mathcal{M}$ be an $1$-cusped hyperbolic $3$-manifold having non-quadratic cusp shapes. For any $n>0$, the volumes of $n$ distinct Dehn fillings of $\mathcal{M}$ with sufficiently large coefficients are linearly independent over $\mathbb{Q}$. 
\end{conjecture}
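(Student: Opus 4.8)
The plan is to reduce the relation of Theorem~\ref{19090606} to a single \emph{multiplicative} identity among the core holonomies, and then to exclude that identity by the same Diophantine analysis of the Neumann--Zagier equations that powers Theorem~\ref{19072401}. Write $t_i$ for the holonomy of the (unique) core geodesic of $\mathcal{M}_{p_i/q_i}$, so $|t_i|>1$, $t_i$ is an algebraic number, and by definition $\text{pvol}_{\mathbb{C}}\,\mathcal{M}_{p_i/q_i}\equiv\text{vol}_{\mathbb{C}}\,\mathcal{M}-\tfrac{\pi}{2}\log t_i\pmod{i\pi^{2}\mathbb{Z}}$. Assuming for contradiction that $\sum_{i=1}^{n}a_i\,\text{pvol}_{\mathbb{C}}\,\mathcal{M}_{p_i/q_i}\equiv 0\pmod{i\pi^{2}\mathbb{Z}}$ with all $a_i\in\mathbb{Q}\setminus\{0\}$, I would first clear denominators to take $a_i\in\mathbb{Z}\setminus\{0\}$, set $A:=\sum_i a_i$, substitute the formula above, and exponentiate; the relation becomes
\begin{equation}\label{pl-mult}
\prod_{i=1}^{n}t_i^{\,a_i}\;=\;c,\qquad c:=\exp\!\Big(\tfrac{2A}{\pi}\,\text{vol}_{\mathbb{C}}\,\mathcal{M}\Big),
\end{equation}
where $c$ depends only on $\mathcal{M}$ and $A$, $|c|=e^{(2A/\pi)\,\text{vol}\,\mathcal{M}}$, and $c=1$ exactly when $A=0$. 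Since each $t_i$ is algebraic, \eqref{pl-mult} forces $c$ to be algebraic as well; in particular when $A\neq 0$ it pins $\text{vol}_{\mathbb{C}}\,\mathcal{M}$ into $\tfrac{\pi}{2}\log\overline{\mathbb{Q}}^{\times}$ modulo $i\pi^{2}\mathbb{Z}$ (which is itself already the expected content of the theorem).

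Before the main argument I would dispose of the case in which all the $a_i$ have the same sign — this covers $n=1$ — by an elementary estimate. Taking absolute values in \eqref{pl-mult} gives $\tfrac{2A}{\pi}\text{vol}\,\mathcal{M}=\sum_i a_i\log|t_i|$, whence $|A|\,\text{vol}\,\mathcal{M}\le\tfrac{\pi}{2}\bigl(\sum_i|a_i|\bigr)\max_i\log|t_i|=\tfrac{\pi}{2}|A|\max_i\log|t_i|$, so $\text{vol}\,\mathcal{M}\le\tfrac{\pi}{2}\max_i\log|t_i|$; but $\log|t_i|\to 0$ as $|p_i|+|q_i|\to\infty$ by Theorem~\ref{19092501}, so this is impossible once the fillings are large. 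Hence from now on $n\ge2$ and the $a_i$ are of mixed sign, so that $|A|<\sum_i|a_i|$ and this crude bound no longer applies.

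The heart of the matter is to rule out \eqref{pl-mult} in the mixed-sign case by transplanting the Neumann--Zagier--datum analysis behind Theorem~\ref{19072401}. Let $u\mapsto v(u)=\tau u+O(u^{3})$ be the holonomy function of the cusp of $\mathcal{M}$ (so $e^{u}$, $e^{v(u)}$ are the holonomies of the meridian and longitude along the deformation, $\tau$ the cusp shape), let $\Gamma=\{(u,v(u))\}$ be the corresponding algebraic curve, and for each $i$ let $u_{p_i/q_i}$ be the small solution of $p_i u+q_i v(u)=2\pi i$; these points lie on $\Gamma$ and tend to the origin as the slopes grow. With $(r_i,s_i)$ any integral solution of $p_i s_i-q_i r_i=1$, the core holonomy satisfies $\log t_i=\tfrac12\bigl(r_i u_{p_i/q_i}+s_i v(u_{p_i/q_i})\bigr)$ up to normalization, coming from the curve dual to the $i$-th filling slope, so \eqref{pl-mult} is equivalent to a $\mathbb{Z}$-linear relation
\begin{equation}\label{pl-lin}
\sum_{i=1}^{n}a_i\bigl(r_i u_{p_i/q_i}+s_i v(u_{p_i/q_i})\bigr)\;\in\;\log\bigl(c^{2}\bigr)+4\pi i\,\mathbb{Z}
\end{equation}
among the values of $u$ and $v(u)$ at the $n$ distinct points $u_{p_1/q_1},\dots,u_{p_n/q_n}\in\Gamma$, with algebraic inhomogeneous term. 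Because $(r_i,s_i)$ is dual to $(p_i,q_i)$, the relation \eqref{pl-lin} is not a $\mathbb{Z}$-combination of the defining relations $p_i u_{p_i/q_i}+q_i v(u_{p_i/q_i})=2\pi i$ when all $a_i\neq0$ — it is genuinely new — and the non-quadratic hypothesis on $\tau$ is exactly what forbids $\Gamma$, equivalently the power series $v$, from admitting such a relation for $n$ distinct large slopes: running the height-gap / unlikely-intersection estimate established for Theorem~\ref{19072401}, the tuples $(u_{p_1/q_1},\dots,u_{p_n/q_n})$ realizing a fixed nontrivial character with a fixed algebraic target value form a set bounded away from the complete structure, hence cannot occur once $|p_i|+|q_i|$ are sufficiently large. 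This contradicts \eqref{pl-lin} and finishes the proof.

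The main obstacle is precisely this last step. Theorem~\ref{19072401} compares the $n$ core holonomies of a \emph{single} filling of an $n$-cusped manifold, whereas here one must compare the core holonomies of $n$ \emph{distinct} fillings of one cusp, and on top of that the relation \eqref{pl-lin} carries an inhomogeneous term originating from $\text{vol}_{\mathbb{C}}\,\mathcal{M}$ — trivial when $A=0$, but only known to be the logarithm of an algebraic number when $A\neq0$ — so the quantitative input behind Theorem~\ref{19072401} must be arranged to absorb both features uniformly. That the non-quadratic hypothesis is genuinely needed is clear: if $\tau$ is quadratic there are infinitely many large $p/q$ whose core holonomies, and hence whose pseudo complex volumes, satisfy nontrivial $\mathbb{Q}$-linear relations.
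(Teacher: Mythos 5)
You have proved (or rather, sketched) the wrong statement. The result you were given is a \emph{conjecture} that the paper leaves open, concerning the actual hyperbolic volumes $\text{vol}\,\mathcal{M}_{p_i/q_i}$ of the filled manifolds. You have substituted $\text{pvol}_{\mathbb{C}}\,\mathcal{M}_{p_i/q_i}$ from the outset, which turns the problem into Theorem~\ref{19090606} — the result the paper \emph{does} prove and which the conjecture is only ``inspired from.'' The two are genuinely different: by Theorem~\ref{19102301},
\begin{equation*}
\text{vol}_{\mathbb{C}}\;\mathcal{M}_{p/q}\equiv\text{pvol}_{\mathbb{C}}\;\mathcal{M}_{p/q}+\epsilon(p/q)\pmod{i\pi^2\mathbb{Z}},
\end{equation*}
where $\epsilon(p/q)\to 0$ but is not known to be algebraic or to satisfy any Diophantine control. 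The whole mechanism you invoke — exponentiating to a multiplicative relation among the algebraic numbers $t_i$, then applying the height bound and the unlikely-intersection/anomalous-subvariety machinery — only works because $\text{pvol}_{\mathbb{C}}$ is, by construction, $\text{vol}_{\mathbb{C}}\,\mathcal{M}$ minus $\frac{\pi}{2}\log t_i$. For the true volume, the error term $\epsilon$ pollutes the relation with a quantity that has no algebraic incarnation, and there is no analogue of \eqref{pl-mult}. The paper is explicit that it cannot close this gap; that is why the statement is labelled a Conjecture.

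Even read as a proof of Theorem~\ref{19090606}, the argument has a concrete hole in what you call ``the heart of the matter.'' After writing $\prod_i t_i^{a_i}=c$, you dispose of the all-same-sign case by a real-part estimate (which is fine, but covers only $\sum a_i\neq 0$ with monotone sign); for the remaining cases you ``run the height-gap / unlikely-intersection estimate established for Theorem~\ref{19072401}'' without saying how the inhomogeneous algebraic constant $c$ is absorbed. The technology of Theorems~\ref{19090803}, \ref{19090804} and \ref{19090805} is calibrated for intersections of $\mathcal{X}^{(n)}$ with \emph{algebraic subgroups} (equivalently, cosets by torsion points), not cosets $gH$ with $g$ a fixed algebraic, non-torsion point, so that $c\ne 1$ blocks the direct application. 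The paper's actual route is structurally different and resolves exactly this: it splits on $A:=\sum_k a_{ik}$. When $A=0$ the $\text{vol}_{\mathbb{C}}\,\mathcal{M}$ term cancels, the relation becomes a genuine torsion relation $\prod_k t_{p_{ik}/q_{ik}}^{a_{ik}}=1$, and Lemma~\ref{19091508} applied to $\mathcal{X}^{(n)}$ forces two slopes to coincide. When $A\neq0$ it takes two distinct $n$-tuples of fillings $i\neq j$ and cross-multiplies,
\begin{equation*}
\Big(\prod_{k=1}^n t_{p_{ik}/q_{ik}}^{a_{ik}}\Big)^{\sum_k a_{jk}}=\Big(\prod_{k=1}^n t_{p_{jk}/q_{jk}}^{a_{jk}}\Big)^{\sum_k a_{ik}},
\end{equation*}
thereby \emph{eliminating} $\text{vol}_{\mathbb{C}}\,\mathcal{M}$ entirely and producing a torsion relation among $2n$ holonomies, which is then handled by Lemma~\ref{19091508} applied to $\mathcal{X}^{(2n)}$. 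You never perform this cross-multiplication, and you never invoke Lemma~\ref{19091508} (or the nonlinearity input of Lemma~\ref{NZ} behind Lemma~\ref{19100901}), which is where the non-quadratic hypothesis actually bites; the sentence about tuples ``forming a set bounded away from the complete structure'' does not substitute for it.
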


Finally we have the following definition and theorem. 
\begin{definition}
We say that $\alpha_1, \dots, \alpha_n\in \overline{\mathbb{Q}}$ are multiplicatively independent if 
\begin{equation*}
\alpha_1^{a_1}\cdots \alpha_n^{a_n}\neq 1
\end{equation*}
for any $(a_1, \dots, a_n)\in \mathbb{Z}^n\backslash\{0, \dots, 0\}$.
\end{definition}

\begin{theorem}\label{19100207}
Let $\mathcal{M}$ be an $n$-cusped hyperbolic $3$-manifold having non-symmetric cusp shapes. Then, for sufficiently large $|p_i|+|q_i|$ ($1\leq i\leq n$), the holonomies of $\mathcal{M}_{(p_1/q_1,\dots, p_n/q_n)}$ are multiplicatively independent. 
\end{theorem}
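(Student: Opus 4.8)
The plan is to obtain Theorem~\ref{19100207} as a direct consequence of Theorem~\ref{19072401}, by applying that theorem with the two Dehn fillings taken to be \emph{equal} and with trivial exponents on one side. Concretely, I would argue by contradiction: suppose that for some $(p_1/q_1,\dots,p_n/q_n)$ with $|p_i|+|q_i|$ sufficiently large the holonomies $t^1,\dots,t^n$ of $\mathcal{M}_{(p_1/q_1,\dots,p_n/q_n)}$, normalized so that $|t^i|>1$, are \emph{not} multiplicatively independent. Then by definition there is a nonzero $(a_1,\dots,a_n)\in\mathbb{Z}^n$ with $\prod_{i=1}^n (t^i)^{a_i}=1$.

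Next I would invoke Theorem~\ref{19072401} with $(p'_i/q'_i)=(p_i/q_i)$ for every $i$, with $j_i=j'_i=i$, and with $a'_i=0$ for every $i$. With these choices the left-hand side of \eqref{19092902} is $\prod_{i=1}^n (t^i)^{a_i}$ and the right-hand side is $\prod_{i=1}^n (t^i)^{0}=1$, so \eqref{19092902} is exactly the identity $\prod_{i=1}^n (t^i)^{a_i}=1$ assumed above, and Theorem~\ref{19072401} applies. Its conclusion then gives
\[
(t^i)^{a_i}=(t^i)^{0}=1\qquad\text{for every }1\le i\le n .
\]
Since $|(t^i)^{a_i}|=|t^i|^{a_i}$ and $|t^i|>1$, this forces $a_i=0$ for every $i$, contradicting the choice of a nonzero $(a_1,\dots,a_n)$; hence the holonomies are multiplicatively independent.

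The entire force of the argument is carried by Theorem~\ref{19072401} (and, through it, by the hypothesis that $\mathcal{M}$ has non-symmetric cusp shapes), so no genuine obstacle remains at this stage; the only points to verify are that Theorem~\ref{19072401} is legitimately applicable with a trivial right-hand product — it imposes no non-vanishing condition on the exponents, so taking all $a'_i=0$ is permitted — and that the normalization $|t^i|>1$ is precisely what turns the ``split'' relations $(t^i)^{a_i}=1$ into $a_i=0$. One could instead attempt a more self-contained route, grouping $\prod (t^i)^{a_i}=1$ into its positive- and negative-exponent parts to obtain an equality of two honest products and applying Theorem~\ref{19072401} to that; this reduces multiplicative dependence of the $n$ holonomies to that of a single pair but does not terminate on its own, which is why the direct application above is the cleaner argument.
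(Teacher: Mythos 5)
Your argument is correct. The heart of it is the observation that multiplicative dependence $\prod_i (t^i)^{a_i}=1$ is the degenerate instance of \eqref{19092902} obtained by taking $(p'_i/q'_i)=(p_i/q_i)$, $j_i=j'_i=i$, and $a'_i=0$ for every $i$; Theorem~\ref{19072401} places no non-vanishing hypothesis on the $a'_i$, and the normalization $|t^i|>1$ (built into that theorem's statement) is exactly what converts the split relations $(t^i)^{a_i}=1$ into $a_i=0$.

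This is a genuinely different logical route from the paper's, though not a different underlying argument. The paper never derives Theorem~\ref{19100207} from Theorem~\ref{19072401}: instead it proves a master statement, Theorem~\ref{19101107}, parametrized by a pair $(n,m)$ with $n\geq m$, and obtains Theorem~\ref{19072401} as the $n=m$ case (two copies of $\mathcal{X}$, i.e.\ $\mathcal{X}\times\mathcal{X}$) and Theorem~\ref{19100207} as the $m=0$ case (a single copy of $\mathcal{X}$, empty product on the right). Both specializations carry the same non-symmetric-cusp-shape hypothesis and rest on Lemma~\ref{19071803} and Theorem~\ref{19090806}. Your derivation passes through the $n=m$ case and then degenerates it (equal fillings, trivial exponents on one side), so it implicitly works with the diagonal of $\mathcal{X}\times\mathcal{X}$ rather than with $\mathcal{X}$ alone; this is wasteful but harmless, and it has the virtue of not requiring the reader to unwind the $(n,m)$-parametrized statement of Theorem~\ref{19101107}. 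The one thing worth making explicit is that Theorem~\ref{19072401}, as an unconditional implication about arbitrary pairs of sufficiently large filling slopes, really does allow the two slope tuples to coincide -- nothing in its statement excludes this -- so the application is legitimate.
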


\subsection{Key idea}

The basic strategy is similar to the one attempted in \cite{jeon4}. Let $\mathcal{M}$ be an $n$-cusped hyperbolic $3$-manifold and $\mathcal{X}$ be its A-polynomial. If there exist two Dehn fillings of $\mathcal{M}$ satisfying \eqref{19092902}, there exists a nontrivial intersection point between $\mathcal{X}\times \mathcal{X}(\subset \mathbb{C}^{4n})$ and an algebraic subgroup of codimension $2n+1$. Thus if there are infinitely many such pairs of hyperbolic Dehn fillings, we get infinitely many algebraic subgroup $\{H_i\}_{i\in \mathcal{I}}$ of codimension $2n+1$ satisfying 
\begin{equation}\label{19101203}
(\mathcal{X}\times \mathcal{X})\cap H_i\neq \emptyset\quad (i\in \mathcal{I}).
\end{equation} 
In \cite{jeon4}, we assumed the Zilber-pink conjecture to get the main arguments there. But in this paper, instead of the Zilber-Pink conjecture, we use a slightly weaker conjecture so called the \textit{torsion openess conjecture} formulated by Bombieri-Masser-Zannier in \cite{za}. We prove the conjecture for a special case, locally near a small neighborhood of the identity for the A-polynomial of any cusped hyperbolic $3$-manifold. Since most Dehn filling points are distributed near a small neighborhood of the identity, this partial solution suffices for our purpose.  

It was proved by the author that the height of any point \eqref{19101203} is uniformly bounded (see Theorem \ref{19101204}). Also, by the work of Bombieri-Masser-Zannier, the degree of any \eqref{19101203} is uniformly bounded except for those points lying over torsion anomalous subvarieties of $\mathcal{X}\times \mathcal{X}$ (see Theorem \ref{19090804}). Since there are only finitely many algebraic number of bounded height and degree, we get, possibly except for finitely many, almost all of
\begin{equation}\label{19101301}
\bigcup_{i\in \mathcal{I}}(\mathcal{X}\times \mathcal{X})\cap H_i
\end{equation} 
are contained in arbitrarily many torsion anomalous subvarieties of $\mathcal{X}\times \mathcal{X}$. Now thanks to the aforementioned partial solution of the torsion openess conjecture, locally near the identity, the union of infinitely many torsion anomalous subvarieties of $\mathcal{X}\times\mathcal{X}$ are contained in only a finite number of maximal torsion anomalous subvarieties of $\mathcal{X}\times \mathcal{X}$ (see Theorem \ref{19090806}). 

In conclusion, \eqref{19101301} is contained in the union of only finitely many torsion anomalous subvarieties of $\mathcal{X}\times \mathcal{X}$. Finally, we get the desired result by analyzing the structure of each maximal torsion anomalous subvariety of $\mathcal{X}\times \mathcal{X}$.

\subsection{Outline of the paper}
In Section \ref{19100204}, we study some necessary background. We go over basic concepts and theorems required in later sections, but no new results will be proved in this section. In Section \ref{19100206}, we provide preliminary lemmas and theorems needed in the proofs the main theorems. In particular, we state the torsion openess conjecture and prove it for a special case in Section \ref{19100205}. Section \ref{19100209} is the heart of the paper and Theorems \ref{19072401} and \ref{19100207} will be shown in this section. In fact, we formulate and prove a stronger statement, and Theorems \ref{19072401} and \ref{19100207} then naturally follow as corollaries of this stronger one. Finally, we prove Theorem \ref{19090606} in Section \ref{19101404}.

\section{Background}\label{19100204}
\subsection{A-polynomial}
For a given $n$-cusped hyperbolic $3$-manifold $\mathcal{M}$, let $T_i$ be a torus cross-section of the $i$\textsuperscript{th}-cusp of $\mathcal{M}$ and $l_i, m_i$ be the chosen longitude-meridian pair of $T_i$ $(1\leq i\leq n)$. The A-polynomial of $\mathcal{M}$ is defined as
\begin{equation}\label{19071901}
\text{Hom}\;\big(\pi_1(\mathcal{M}), SL_2\mathbb{C}\big)/\sim,
\end{equation}
with the holonomies of $l_i, m_i$ $(1\leq i\leq n)$ parameters. We typically denote the holonomies of $l_i, m_i$ by $M_i, L_i$ $(1\leq i\leq n)$. It is known that \eqref{19071901} is an $n$-dimensional algebraic variety in $\mathbb{C}^{2n}(:=(M_1, L_1, \dots, M_n, L_n))$ and $(1,\dots, 1)$ gives rise to the complete hyperbolic metric structure of $\mathcal{M}$. 
\subsection{Dehn Filling}\label{Dehn}   
Let $\mathcal{M}$ be the same as above and 
\begin{equation*}
\mathcal{M}_{(p_1/q_1,...,p_n/q_n)}
\end{equation*}
be the $(p_1/q_1,\dots,p_n/q_n)$-Dehn filling of $\mathcal{M}$. By the Seifert-Van Kampen theorem, the fundamental group of $\mathcal{M}_{(p_1/q_1,...,p_n/q_n)}$ is obtained by adding the relations
\begin{gather*}
m_1^{p_1}l_1^{q_1}=1, \quad...\quad, m_n^{p_n}l_n^{q_n}=1
\end{gather*}
to the fundamental group of $\mathcal{M}$ where $m_i, l_i$ ($1\leq i\leq n$) are meridian-longitude pairs as previously. Hence if $\mathcal{X}$ is the A-polynomial of $\mathcal{M}$, then the discrete faithful representation 
\begin{equation*}
\phi\;:\;\pi_1(\mathcal{M}_{(p_1/q_1, \dots, p_n/q_n)})\longrightarrow SL_2\mathbb{C}
\end{equation*}
corresponds to an intersection point between $\mathcal{X}$ and an algebraic subgroup defined by  
\begin{equation} \label{Dehn eq}
M_1^{p_1}L_1^{q_1}=1,\quad ... \quad , M_n^{p_n}L_n^{q_n}=1. 
\end{equation}
We call \eqref{Dehn eq} the \emph{Dehn filling equations} with coefficient $(p_1/q_1,\dots,p_n/q_n)$ and a point inducing the hyperbolic structure on $\mathcal{M}_{(p_1/q_1,...,p_n/q_n)}$ a \emph{Dehn filling point} corresponding to $\mathcal{M}_{(p_1/q_1,...,p_n/q_n)}$. Let 
\begin{equation}\label{040401}
(M_1,L_1,\dots,M_n,L_n)=(t_1^{-q_1}, t_1^{p_1},\dots, t_n^{-q_n},t_n^{p_n})
\end{equation}
where $|t_i|>1$ for each $i$ ($1\leq i\leq n$) be a one of the Dehn filling points corresponding to $\mathcal{M}_{(p_1/q_1,...,p_n/q_n)}$. Then the holonomy of each core geodesic $m_i^{s_i}l_i^{r_i}$ where $p_ir_i-q_is_i=1$ is 
\begin{equation}\label{19101601}
M_i^{s_i}L_i^{r_i}=(t_i^{-q_i})^{s_i}(t_i^{p_i})^{r_i}=t_i^{-q_is_i+p_is_i}=t_i\quad (1\leq i\leq n). 
\end{equation}
We define 
\begin{equation*}
\{t_1, \dots, t_n\}
\end{equation*}
as the set of \emph{holonomies} of \eqref{040401} or $\mathcal{M}_{(p_1/q_1,...,p_n/q_n)}$.

The following theorem is a part of Thurston's hyperbolic Dehn filling theory \cite{nz}\cite{thu}. 
\begin{theorem} \label{040605}
Using the same notation as above, 
\begin{equation*}
(t_1^{-q_1}, t_1^{p_1},\dots, t_n^{-q_n},t_n^{p_n})\rightarrow(1,\dots,1)\in \mathbb{C}^{2n}
\end{equation*}
and 
\begin{equation*}
(t_1,\dots,t_n)\rightarrow(1,\dots,1)\in\mathbb{C}^{n}
\end{equation*}
as $|p_i|+|q_i|\rightarrow\infty$ for $1\leq i\leq n$. Moreover, 
\begin{equation*}
|t_i|\neq 1
\end{equation*} 
for every $1\leq i\leq n$ . 
\end{theorem}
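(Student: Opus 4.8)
The plan is to recover this statement (which is part of Thurston's and Neumann--Zagier's hyperbolic Dehn surgery theory) from the Neumann--Zagier local description of $\mathcal{X}$ near the complete structure. Recall that in a neighborhood of the point $(1,\dots ,1)$ the geometric component of $\mathcal{X}$ is a smooth $n$-dimensional complex manifold, holomorphically parametrized by the logarithmic meridian holonomies $u_i:=\log M_i$ (with $u_i=0$ at the complete structure), along which the logarithmic longitude holonomies are holomorphic functions $v_i=v_i(u_1,\dots ,u_n)$ with $v_i(0)=0$ and differential $\tfrac{\partial v_i}{\partial u_j}(0)=\delta_{ij}\tau_i$, where $\tau_i$ is the cusp shape of the $i$-th cusp; in particular $\operatorname{Im}\tau_i\neq 0$, so $v_i(u)=\tau_i u_i+O(\lVert u\rVert^2)$. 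In these coordinates a $(p_1/q_1,\dots ,p_n/q_n)$-Dehn filling point is the solution near $u=0$ of the system $p_iu_i+q_iv_i(u)=2\pi i$ $(1\le i\le n)$, and by \eqref{040401}, \eqref{19101601} one has $M_i=e^{u_i}=t_i^{-q_i}$, $L_i=e^{v_i}=t_i^{p_i}$, and $\log t_i\equiv s_iu_i+r_iv_i \pmod{2\pi i}$ for any exponents $(s_i,r_i)$ of the $i$-th core curve, i.e. with $p_ir_i-q_is_i=1$.

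First I would establish the convergence of the filling points. Substituting $v_i(u)=\tau_iu_i+O(\lVert u\rVert^2)$ into the filling equations gives $(p_i+q_i\tau_i)u_i=2\pi i+q_i\cdot O(\lVert u\rVert^2)$. Since $\operatorname{Im}\tau_i\neq 0$, the identity $|p_i+q_i\tau_i|^2=(p_i+q_i\operatorname{Re}\tau_i)^2+(q_i\operatorname{Im}\tau_i)^2$ shows that $|p_i+q_i\tau_i|\to\infty$ as $|p_i|+|q_i|\to\infty$; after the rescaling $w_i=(p_i+q_i\tau_i)u_i$ the system becomes a small perturbation of $w_i=2\pi i$, so a contraction–mapping argument yields, for $|p_i|+|q_i|$ large, a unique small solution with
\[
u_i=\frac{2\pi i}{p_i+q_i\tau_i}\,(1+o(1))\longrightarrow 0,\qquad v_i=\tau_iu_i+O(\lVert u\rVert^2)\longrightarrow 0 .
\]
Hence $M_i=e^{u_i}\to 1$ and $L_i=e^{v_i}\to 1$, which is precisely $(t_1^{-q_1},t_1^{p_1},\dots ,t_n^{-q_n},t_n^{p_n})\to(1,\dots ,1)$ in $\mathbb{C}^{2n}$.

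Next, the assertion $|t_i|\neq 1$. Writing $\log t_i\equiv s_iu_i+r_iv_i=(s_i+r_i\tau_i)u_i+O(\lVert u\rVert^{3})$ and using the determinant identity $\operatorname{Im}(\overline{(p_i+q_i\tau_i)}(s_i+r_i\tau_i))=(p_ir_i-q_is_i)\operatorname{Im}\tau_i=\operatorname{Im}\tau_i$, which is independent of the choice of $(s_i,r_i)$, I would compute
\[
\operatorname{Re}(\log t_i)=-\frac{2\pi\operatorname{Im}\tau_i}{|p_i+q_i\tau_i|^{2}}\,(1+o(1)),
\]
which is nonzero for $|p_i|+|q_i|$ large, so $|t_i|\neq 1$; it also tends to $0$, so $|t_i|\to 1$. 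More conceptually, $\operatorname{Re}(\log t_i)$ is the translation length of the $i$-th core geodesic, which is positive because $\mathcal{M}_{(p_1/q_1,\dots ,p_n/q_n)}$ is a genuine hyperbolic manifold and tends to $0$ by Theorem~\ref{19092501}(2).

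Finally, the full convergence $(t_1,\dots ,t_n)\to(1,\dots ,1)$ — equivalently $\log t_i\to 0$, not merely $\operatorname{Re}(\log t_i)\to 0$ — I would obtain from Thurston's hyperbolic Dehn surgery theorem together with the Neumann--Zagier formula: choosing the representative $(s_i,r_i)$ so that $|s_i+r_i\tau_i|$ is minimal within the coset $(s_i,r_i)+\mathbb{Z}(p_i,q_i)$, and combining $u_i=\tfrac{2\pi i}{p_i+q_i\tau_i}(1+o(1))$ with the geometric convergence $\mathcal{M}_{(p_1/q_1,\dots ,p_n/q_n)}\to\mathcal{M}$ and the fact that the core geodesics are the short geodesics created by the filling (Theorem~\ref{19092501}), one reads off $\log t_i\to 0$, hence $t_i\to 1$. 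I expect this last step to be the main obstacle: the difficulty is entirely the branch and sign bookkeeping — which logarithm, which lift to $SL_2\mathbb{C}$ versus $PSL_2\mathbb{C}$, and which representative of the core curve — so that the purely algebraic estimates of the previous paragraphs translate into the clean limit; once the correct normalizations are pinned down, those estimates finish the proof.
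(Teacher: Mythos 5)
The paper supplies no proof of Theorem~\ref{040605}; it is stated as ``a part of Thurston's hyperbolic Dehn filling theory'' and cited to \cite{nz}, \cite{thu}. There is therefore no in-paper argument to compare against, and the relevant question is simply whether your reconstruction from the Neumann--Zagier framework is sound.

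Your first two paragraphs are correct and are exactly the standard NZ derivation. The rescaled contraction argument for the unique small solution of $p_iu_i+q_iv_i=2\pi i$ with $u_i\sim\frac{2\pi i}{p_i+q_i\tau_i}$, and the determinant identity $\operatorname{Im}\big(\overline{(p_i+q_i\tau_i)}(s_i+r_i\tau_i)\big)=(p_ir_i-q_is_i)\operatorname{Im}\tau_i$ giving $\operatorname{Re}\log t_i=-\frac{2\pi\operatorname{Im}\tau_i}{|p_i+q_i\tau_i|^2}(1+o(1))\neq 0$, are both right and give parts one and three of the statement.

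The gap you flag in the last paragraph is real, and it is not merely ``bookkeeping.'' The quantity $\operatorname{Im}\log t_i$ is, modulo $2\pi\mathbb{Z}$, asymptotic to $2\pi\operatorname{Re}\!\left(\frac{s_i+r_i\tau_i}{p_i+q_i\tau_i}\right)$, and the fraction $\frac{s_i+r_i\tau_i}{p_i+q_i\tau_i}$ is an $SL_2\mathbb{Z}$-image of $\tau_i$. Minimizing $|s_i+r_i\tau_i|$ over the coset $(s_i,r_i)+\mathbb{Z}(p_i,q_i)$ bounds its modulus by $\tfrac12|p_i+q_i\tau_i|$, which only gives $|\operatorname{Im}\log t_i|\lesssim\pi$, not $\operatorname{Im}\log t_i\to 0$. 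Concretely, along a sequence with $q_i=q_0$ fixed and $p_i\to\infty$, one has $\frac{s_i+r_i\tau_i}{p_i+q_i\tau_i}\to\frac{r_i}{q_0}$ with $r_i\equiv p_i^{-1}\pmod{q_0}$, so your asymptotics as stated yield $\log t_i\to 2\pi i\,r_i/q_0\pmod{2\pi i}$, i.e.\ $t_i$ accumulating at $q_0$-th roots of unity rather than at $1$. Thurston's length estimate only forces $\operatorname{Re}\log t_i\to 0$, and Theorem~\ref{19092501}(2) says nothing about the rotation part, so ``geometric convergence plus the right normalization'' does not close the gap as written. Pinning down the full convergence $t_i\to 1$ requires an additional input beyond the local NZ expansion and the minimizing choice of $(s_i,r_i)$ --- one has to control the argument of $t_i$ modulo $2\pi$, which the tools you invoke do not do directly. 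Since the paper itself only ever uses the first assertion $(M_1,L_1,\dots)\to(1,\dots,1)$ of this theorem in its later arguments, this gap is isolated to the present statement, but it is a genuine missing step in your proposal rather than a matter of normalization.
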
  
By Theorem \ref{040605}, most Dehn filling points are lying on a small neighborhood  $\mathcal{N}_\mathcal{X}$ of $(1, \dots, 1)$ in $\mathcal{X}$. If
\begin{align} 
u_i:=\log M_i,\quad v_i:=\log L_i\quad (1\leq i\leq n) \label{mer}
\end{align}
where $(M_1, L_1, \dots, M_n, L_n)\in \mathcal{N}_\mathcal{X}$, then the following statements hold in a neighborhood of the origin in $\mathbb{C}^n$ with $u_1,\dots,u_n$ as coordinates \cite{nz}.
\begin{theorem} \label{potential}
\begin{enumerate}
\item For each $i$ ($1\leq i\leq n$), there exists a holomorphic function $\tau _i(u_1,\dots,u_n)$ such that $\tau_i(0,\dots,0)=\tau_i\in \mathbb{C}\backslash\mathbb{R}$ and 
\begin{equation*}
v_i=u_i\cdot\tau_i(u_1,\dots,u_n).
\end{equation*}
\item There is a holomorphic function $\Phi(u_1,\dots,u_n)$ such that $\Phi(u_1,\dots,u_n)$ is even in each argument, $\Phi(0,\dots,0)=0$ and 
\begin{equation*}
v_i=\dfrac{1}{2}\dfrac{\partial \Phi}{\partial u_i}  
\end{equation*}
for each $i$ $(1\leq i\leq n)$.
\end{enumerate}
\end{theorem}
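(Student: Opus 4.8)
The statement is the Neumann--Zagier parametrization \cite{nz} (see also \cite{Y}), quoted here as background, and the plan is to reproduce its proof. The starting point is Thurston's deformation theory: at the complete structure $\rho_0$ one has $\dim_{\mathbb{C}} H^1(\mathcal{M};\mathrm{Ad}\,\rho_0)=n$, so near $(1,\dots,1)$ the geometric component $\mathcal{X}$ is a smooth complex $n$-manifold, and the projection $(M_1,L_1,\dots,M_n,L_n)\mapsto (u_1,\dots,u_n)=(\log M_1,\dots,\log M_n)$ is a biholomorphism from a neighborhood of $(1,\dots,1)$ in $\mathcal{X}$ onto a neighborhood of $0\in\mathbb{C}^n$. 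Hence each $v_i=\log L_i$ is a well-defined holomorphic function of $u=(u_1,\dots,u_n)$ near the origin, with $v_i(0)=0$ since $(1,\dots,1)$ is the complete structure. The entire content of the theorem is therefore the two refinements of this picture.

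For part (1), the key input is the symmetry of $\mathcal{X}$ under the Weyl group $(\mathbb{Z}/2)^n$: for each $i$, interchanging the two fixed points of the holonomy of $\pi_1(T_i)$ on $\partial\mathbb{H}^3$ simultaneously inverts the eigenvalues of $m_i$ and of $l_i$ and leaves the other coordinates unchanged, so $\mathcal{X}$ is invariant under $(M_i,L_i)\mapsto(M_i^{-1},L_i^{-1})$. Since on our neighborhood $\mathcal{X}$ is the graph $v=v(u)$, this invariance forces $v_i$ to be odd in $u_i$ and even in each $u_j$ with $j\neq i$; in particular $v_i$ vanishes identically on the hyperplane $\{u_i=0\}$. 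Dividing by the coordinate $u_i$ in the power series of $v_i$ then produces a holomorphic $\tau_i(u)$ with $v_i=u_i\,\tau_i(u)$, and $\tau_i(0)=\partial v_i/\partial u_i\big|_{0}$ is the derivative of the longitude log-holonomy with respect to the meridian log-holonomy at the complete structure, which is precisely the cusp shape $\tau_i$; it lies in $\mathbb{C}\setminus\mathbb{R}$ because a cusp cross-section is a genuine Euclidean torus.

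For part (2), the crux is that the Jacobian $J(u)=(\partial v_i/\partial u_j)_{i,j}$ is symmetric. There are two standard routes. Neumann--Zagier's original argument fixes an ideal triangulation of $\mathcal{M}$, writes the edge-gluing and cusp equations as monomial relations in the tetrahedron shape parameters, and extracts from the combinatorics of the Neumann--Zagier datum the symplectic (Lagrangian) relation among the exponent vectors; differentiating yields $J=J^{T}$. The more intrinsic route identifies the tangent space $T_p\mathcal{X}$ with its image in $H^1(\partial\mathcal{M};\mathrm{Ad}\,\rho)=\bigoplus_i H^1(T_i)$, which by Poincar\'e--Lefschetz duality (``half lives, half dies'') is a Lagrangian subspace for the symplectic form $\sum_i du_i\wedge dv_i$; being the graph $\{(du,\,J\,du)\}$, it is Lagrangian exactly when $J=J^{T}$. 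Granting $J=J^{T}$, the holomorphic $1$-form $\sum_i v_i\,du_i$ is closed on a simply connected neighborhood of $0$, hence equals $d(\tfrac12\Phi)$ for a holomorphic $\Phi$ normalized by $\Phi(0)=0$, which is the asserted potential. Evenness of $\Phi$ in each $u_i$ then follows from the parities in (1): under $u_i\mapsto-u_i$ the form $\sum_k v_k\,du_k$ is invariant, so $\Phi$ changes by a constant, and that constant vanishes by evaluating at the origin.

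The only genuine obstacle is the symmetry of the Jacobian $J$; everything else is formal once Thurston's deformation theory and the Weyl-group symmetry are in hand. Since the theorem is used here purely as background, in the paper it suffices to cite \cite{nz} (and \cite{Y}) for it.
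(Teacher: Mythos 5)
Your proposal is correct: the paper itself offers no proof of Theorem \ref{potential}, quoting it as background from \cite{nz} (cf.\ \cite{Y}), and your argument is a faithful reconstruction of that standard Neumann--Zagier proof. The steps you give — local coordinates $u_1,\dots,u_n$ from Thurston's deformation theory, the Weyl involution $(M_i,L_i)\mapsto(M_i^{-1},L_i^{-1})$ yielding the parity statements and hence divisibility of $v_i$ by $u_i$ with $\tau_i(0)=\tau_i\in\mathbb{C}\setminus\mathbb{R}$, and symmetry of the Jacobian (by the Neumann--Zagier combinatorial relation or the Poincar\'e--Lefschetz ``half lives, half dies'' Lagrangian argument) giving closedness of $\sum_i v_i\,du_i$ and thus the even potential $\Phi$ with $\Phi(0)=0$ — are exactly the content of the cited proof, so citing \cite{nz} is indeed all the paper needs.
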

Thus the Taylor expansions of $\Phi(u_1, \dots, u_n), v_i(u_1, \dots, u_n)$ are of the following forms:
\begin{equation}\label{19100301}
\begin{gathered}
\Phi(u_1, \dots, u_n)=\sum_{j=1}^{n}\tau_ju_j^2+\sum_{\substack{k=4,\\k:even}}^{\infty}\sum_{\substack{j_1+\cdots+j_n=k,\\
j_l:even(1\leq l\leq n)}}c_{j_1,\dots,j_n}u_1^{j_1}\cdots u_n^{j_n},\\
v_i(u_1, \dots, u_n)=\tau_iu_i+\frac{1}{2}\sum_{\substack{k=4, \\ k:even}}^{\infty}\sum_{\substack{j_1+\cdots+j_n=k,\\ j_l: even(1\leq l\leq n)}}j_ic_{j_1,\dots,j_n}u_1^{j_1}\cdots u_i^{j_i-1}\cdots u_n^{j_n}
\end{gathered} 
\end{equation} 
respectively. We call $\tau_i$ the \textit{cusp shape} of $T_i$ with respect to $l_i, m_i$ and $\Phi(u_1,\dots,u_n)$ the \emph{Neumann-Zagier potential function} of $\mathcal{M}$ with respect to $l_i, m_i$ $(1\leq i\leq n)$. The complex manifold defined by
\begin{equation}\label{19101103}
v_i=u_i\cdot\tau_i(u_1,\dots,u_n)\quad (1\leq i\leq n)
\end{equation}
in $\mathbb{C}^{2n}(:=(u_1,v_1, \dots, u_n, v_n))$ is denoted by $\log \mathcal{N}_{\mathcal{X}}$. Clearly $\mathcal{N}_{\mathcal{X}}$ and $\log \mathcal{N}_{\mathcal{X}}$ are locally biholomorphic to each other and we call $\mathcal{N}_{\mathcal{X}}$ the \textit{Dehn filling} subset of $\mathcal{X}$. 

\subsection{Generalized A-polynomial}
We extend the class of A-polynomials. Let $\mathcal{X}$ be an A-polynomial of some $n$-cusped hyperbolic $3$-manifold ($n\geq 2$) and $H$ be an algebraic subgroup defined by 
\begin{equation}\label{19082001}
M_1^{a_1}L_1^{b_1}\cdots M_n^{a_n}L_n^{b_n}=1.
\end{equation}
Suppose $(a_i, b_i)\neq (0,0)$ and let 
\begin{equation}\label{19082001}
\text{Pr}_i:\;(M_1, L_1, \dots, M_n,L_n)\longrightarrow (M_1, L_1, \dots, M_{i-1}, L_{i-1}, M_{i+1}, L_{i+1}, \dots, M_n,L_n)
\end{equation}
be a projection from $\mathbb{C}^{2n}$ to $\mathbb{C}^{2n-2}$. We denote the image of $\mathcal{X}\cap H$ under \eqref{19082001} by $\text{Pr}_i(\mathcal{X}\cap H)$ and the algebraic closure of $\text{Pr}_i(\mathcal{X}\cap H)$ by $\overline{\text{Pr}_i(\mathcal{X}\cap H)}$.\footnote{Note that $\overline{\text{Pr}_i(\mathcal{X}\cap H)}$ is an $(n-1)$-dimensional algebraic variety in $(2n-2)$-dimensional algebraic variety.} Without loss of generality, let $i=1$ in \eqref{19082001}. If the holomorphic representations of $\log\mathcal{N}_\mathcal{X}$ are given as in \eqref{19100301}, then $\log (\mathcal{N}_\mathcal{X}\cap H)$ is the intersection between \eqref{19100301} and
\begin{equation}\label{19082202}
\begin{gathered}
a_1u_1+b_1v_1+\cdots+a_nu_n+b_nv_n=0.
\end{gathered}
\end{equation}
Since \eqref{19082202} is equivalent to   
\begin{equation}\label{19082002}
u_1=\sum_{i=2}^{n}-\frac{a_i+b_i\tau_i}{a_1+b_1\tau_1}u_i+\text{higher degrees},
\end{equation} 
by substituting \eqref{19082002} into \eqref{19100301}, we get the holomorphic parametrizations of $\log\big(\text{Pr}_1(\mathcal{N}_\mathcal{X}\cap H)\big)$
in $\mathbb{C}^{2n-2}(:=(u_2, v_2, \dots, u_n, v_n))$ where each $v_i$ ($2\leq i\leq n$) is of the form  
\begin{equation}\label{19100303}
v_i(u_2, \dots, u_n)=\tau_iu_i+\text{higher degrees}\quad (2\leq i\leq n).
\end{equation} 
By abuse of notation, we call 
\begin{equation*}
\tau_2, \dots, \tau_n
\end{equation*}
the \textit{cusp shapes} of $\overline{\text{Pr}_1(\mathcal{X}\cap H)}$ and $\text{Pr}_1(\mathcal{N}_\mathcal{X}\cap H)$ the \textit{Dehn filling} subset of $\overline{\text{Pr}_1(\mathcal{X}\cap H)}$.  

Motivated by this, we now extend the class of A-polynomials inductively as follows:
\begin{definition}
Let $\chi_n^{(0)}$ ($n\geq 1$) be the set of all A-polynomials of $n$-cusped hyperbolic $3$-manifolds. For $\mathcal{X}\in \chi_{n}^{(k)}$ ($k\geq 0$), suppose 
\begin{equation}\label{19082502}
\mathcal{X}\subset \mathbb{C}^{2n}(:=(M_{i_1},L_{i_1},\cdots M_{i_{n}},L_{i_{n}}))
\end{equation}
where the coordinates in \eqref{19082502} are the same as in Section \ref{Dehn} for $k=0$ and obtained canonically by induction as below for $k\geq 1$. For $n\geq 2$, we define $\chi_{n-1}^{(k+1)}$ to be the set of all the algebraic closure of   
\begin{equation*}
\text{Pr}_j(\mathcal{X}\cap H)
\end{equation*}
where $\mathcal{X}\in \chi_{n}^{(k)}$, $H$ is an algebraic subgroup of codimension $1$ defined by 
\begin{equation}\label{19082001}
M_{i_1}^{a_1}L_{i_1}^{b_1}\cdots M_{i_{n}}^{a_{n}}L_{i_{n}}^{b_{n}}=1, \quad (a_{i_j}, b_{i_j})\neq (0,0)\quad (1\leq j\leq n), 
\end{equation}
and 
\begin{equation*}
\text{Pr}_j:\;(M_{i_1}, L_{i_1}, \dots, M_{i_{n}},L_{i_{n}})\longrightarrow (M_{i_1}, L_{i_1}, \dots, M_{i_{j-1}}, L_{i_{j-1}}, M_{i_{j+1}}, L_{i_{j+1}}, \dots, M_{i_{n}},L_{i_{n}})
\end{equation*}
is a projection from $\mathbb{C}^{2n}$ to $\mathbb{C}^{2n-2}$. An element in
\begin{equation*}
\bigcup_{n=1}^{\infty} \bigcup_{k=0}^{n-1}\chi_n^{(k)} 
\end{equation*}
is called a generalized A-polynomial. 
\end{definition}
Note that $\mathcal{X}\in \chi_n^{(k)}$ is an $n$-dimensional variety and it is the projected image of the intersection between the A-polynomial $\mathcal{Z}$ of an $(n+k)$-cusped hyperbolic $3$-manifold and an algebraic subgroup $H$ of coimension $k$. Let $\mathcal{N}_{\mathcal{Z}}$ be the Dehn filling subset of $\mathcal{Z}$. By abuse of notation, we call the projected image of $\mathcal{N}_{\mathcal{Z}}\cap H$ in $\mathcal{X}$ the \textit{Dehn filling subset} of $\mathcal{X}$ and denote it by $\mathcal{N}_{\mathcal{X}}$. In fact, $\mathcal{X}$ possess similar properties to A-polynomials of $n$-cusped hyperbolic $3$-manifolds. For instance, there are canonical coordinates $M_{i_j}, L_{i_j}$ ($1\leq  j\leq n$) such that if 
\begin{equation*}
u_{ij}:=\log M_{i_j}, \quad v_{ij}:=\log L_{i_j} \quad (1\leq j\leq n)
\end{equation*} 
for $(M_{i_1}, L_{i_1}, \dots, M_{i_n}, L_{i_n})\in \mathcal{N}_{\mathcal{X}}$, we get holomorphic representations of the following forms
\begin{equation*}
v_j=\tau_ju_j+\text{higher degrees}\quad (i_1\leq j\leq i_{n})
\end{equation*} 
with $\tau_j\in \mathbb{C}\backslash\mathbb{R}$. Similar to A-polynomials, we call 
\begin{equation}\label{19100401}
\tau_{i_1}, \dots, \tau_{i_{n}}
\end{equation}
the \textit{cusp shapes} of $\mathcal{X}$.\footnote{Note that \eqref{19100401} is a subset of the cusp shapes of $\mathcal{Z}$.} 

If there exists an intersection point $P$ between $\mathcal{N}_{\mathcal{X}}$ and an algebraic subgroup defined by
\begin{equation*}
M_{i_1}^{p_1}L_{i_1}^{q_1}=1, \dots, M_{i_{n}}^{p_{n}}L_{i_{n}}^{q_{n}}=1,
\end{equation*}
then $P$ is of the following form:
\begin{equation}\label{19100403}
(t_1^{-q_1}, t_1^{p_1}, \dots, t_n^{-q_n}, t_n^{p_n})
\end{equation}
for some $t_1, \dots, t_n\in \mathbb{C}$. By abuse of notation once again, if \eqref{19100403} is non-elliptic (i.e. $|t_i|\neq 1$ for all $i$), we call \eqref{19100403} a \textit{Dehn filling} point of $\mathcal{X}$ associated with filling coefficient $(p_1/q_1, \dots, p_n/q_n)$ and 
\begin{equation}
\{t_1, \dots, t_n\}
\end{equation}
the set of the \textit{holomomies} of \eqref{19100403}.
  
Now Theorem \ref{19090803} can be generalized as follows:
\begin{theorem}\label{19101204}\cite{jeon1}\cite{jeon2}
For any generalized A-polynomial $\mathcal{X}$, the height of any Dehn filling point of it is uniformly bounded by some constant depending only on $\mathcal{X}$.  
\end{theorem}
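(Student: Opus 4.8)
The plan is to reduce the statement to that of a genuine A-polynomial and then to bound the logarithmic Weil height of a Dehn filling point place by place, the decisive point being that the core holonomies are algebraic \emph{units}, so that only the archimedean places contribute.

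\emph{Reduction to ordinary A-polynomials.} Suppose $\mathcal X\in\chi_{n-1}^{(k+1)}$ is obtained as $\overline{\mathrm{Pr}_j(\mathcal Z\cap H)}$, with $\mathcal Z$ the A-polynomial of an $(n+k)$-cusped manifold and $H$ defined by \eqref{19082001}. A Dehn filling point of $\mathcal X$ lifts to a point $P\in\mathcal Z\cap H$ which, in the $n-1$ coordinate pairs surviving $\mathrm{Pr}_j$, satisfies the corresponding Dehn filling equations; combining these with the defining relation of $H$ and using $(a_j,b_j)\neq(0,0)$ forces a multiplicative relation in the $j$-th pair of coordinates as well, so that $P$ is itself a Dehn-filling-type point of $\mathcal Z$ once the resulting roots of unity are absorbed into the coefficients. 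Since $\mathrm{Pr}_j$ does not increase the height of any coordinate, it suffices to bound $h$ on Dehn filling points of ordinary A-polynomials; so from now on $\mathcal X$ is the A-polynomial of an $n$-cusped hyperbolic $3$-manifold $\mathcal M$.

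\emph{Only archimedean places matter.} Let $P=(t_1^{-q_1},t_1^{p_1},\dots,t_n^{-q_n},t_n^{p_n})$ be a Dehn filling point, defined over a number field $K$, and let $\phi$ be the discrete faithful representation of $\pi_1(\mathcal M_{(p_1/q_1,\dots,p_n/q_n)})$. Each core holonomy $t_i$ satisfies $t_i+t_i^{-1}=\operatorname{tr}\phi(\gamma_i)$ for the $i$-th core geodesic $\gamma_i$; as the trace of an element of a finite-covolume Kleinian group this is an algebraic integer, so $t_i$ is a root of a monic quadratic over $\mathcal O_K$ with constant term $1$, i.e. an algebraic unit. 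Hence $M_i=t_i^{-q_i}$ and $L_i=t_i^{p_i}$ are units, every non-archimedean valuation of $K$ vanishes on all coordinates of $P$, and with the usual normalization $h(P)=\sum_{i=1}^n\big(h(M_i)+h(L_i)\big)=\sum_{i=1}^n(|p_i|+|q_i|)\,h(t_i)$. Thus the theorem is equivalent to the estimate $h(t_i)=O\!\big(1/(|p_i|+|q_i|)\big)$ with implied constant depending only on $\mathcal X$; equivalently, $\tfrac{1}{[K:\mathbb Q]}\sum_{\sigma\colon K\hookrightarrow\mathbb C}\log^{+}|\sigma(t_i)|=O\!\big(1/(|p_i|+|q_i|)\big)$, so that the overwhelming majority of the conjugate core holonomies must lie within $e^{O(1/(|p_i|+|q_i|))}$ of $1$.

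\emph{The archimedean estimate.} For the distinguished embedding $\sigma_0$ attached to $\phi$ this is Neumann--Zagier: writing $u_i=\log M_i,\ v_i=\log L_i$ one has $v_i=u_i\tau_i(u_1,\dots,u_n)$ as in \eqref{19100301}, while the Dehn filling relation forces $p_iu_i+q_iv_i$ to be a nonzero multiple of $2\pi\sqrt{-1}$, equal to $\pm 2\pi\sqrt{-1}$ for the geometric solution, so $|u_i|,|v_i|,|\log t_i|=O\!\big(1/(|p_i|+|q_i|)\big)$ by Theorems \ref{040605} and \ref{potential}. For every other embedding $\sigma$, the point $\sigma(P)\in\mathcal X$ is a (generally non-geometric) solution of the \emph{same} Dehn filling equations, and the heart of the proof is to show that these conjugate solutions cannot drift off to infinity on $\mathcal X$ faster, on average, than the rate $1/(|p_i|+|q_i|)$. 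The plan is to deduce this from a uniform height bound for the intersections $\mathcal X\cap H$ as $H$ runs over the Dehn filling subgroups: after stripping off any subtorus in which $\mathcal X$ might be contained, one invokes (and, if necessary, adapts to this borderline codimension using both that $(1,\dots,1)\in\mathcal X$ is torsion and that the Dehn filling subgroups form a very restricted one-parameter-per-cusp family) the Bombieri--Masser--Zannier height estimates for the intersection of a fixed variety with algebraic subgroups; combined with the sharp local bound at $\sigma_0$ near $(1,\dots,1)$, this pins down the decay $h(t_i)=O\!\big(1/(|p_i|+|q_i|)\big)$ and hence $h(P)=O(1)$. The step I expect to be the main obstacle is precisely this last one: the distinguished place is elementary and the unit property is soft, but controlling all Galois conjugates of $P$ simultaneously requires the unlikely-intersection height machinery in an essential way, together with a careful accounting of how $[K:\mathbb Q]$ grows with the filling coefficients.
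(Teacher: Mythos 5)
This theorem is not proved in the paper; it is imported verbatim from \cite{jeon1}, \cite{jeon2}. The argument there is a direct application of Habegger's bounded height theorem \cite{hab} to the ($n$-dimensional) A-polynomial $\mathcal{X}\subset G^{2n}$: Dehn filling points are points of $\mathcal{X}\cap H$ for $H$ of codimension $n=\dim\mathcal{X}$, and Habegger's theorem gives uniform boundedness of height on $\mathcal{X}^{oa}\cap\bigcup_{\operatorname{codim}H\geq\dim\mathcal{X}}H$; the remaining work in those references is to show the Dehn filling points do not fall into the anomalous locus. Your proposal is a genuinely different route, but it has two real problems.

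First, the pivot of your argument — that each core holonomy $t_i$ is an algebraic unit because $t_i+t_i^{-1}=\operatorname{tr}\phi(\gamma_i)$ is an algebraic integer — is not generally valid. Integrality of traces for the holonomy of a \emph{closed} hyperbolic $3$-manifold is not automatic: by the Bass--Culler--Shalen theory, non-integral traces produce an essential surface, and Haken closed hyperbolic fillings do occur. So $t_i$ can fail to be a unit, the non-archimedean places need not vanish, and the reduction ``only archimedean places matter'' does not go through. A related slip: $h(P)$ is the height of the tuple (per Definition \ref{height2}), which satisfies $h(P)\leq\sum_i\big(h(M_i)+h(L_i)\big)$ but is not equal to it, so the claimed identity $h(P)=\sum_i(|p_i|+|q_i|)h(t_i)$ is at best an upper bound and the proposed equivalence with $h(t_i)=O(1/(|p_i|+|q_i|))$ is overstated.

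Second, and more importantly, the step you yourself flag as the obstacle — controlling all Galois conjugates of $P$ simultaneously — is precisely where the whole content of the theorem lives, and your sketch offers no concrete mechanism beyond a gesture at Bombieri--Masser--Zannier. As written, the unit reduction and the archimedean estimate at the distinguished place are soft and partly incorrect, and the hard part is left open. If you simply apply Habegger's bounded height theorem to $\mathcal{X}$ directly (which is the actual strategy of \cite{jeon1}, \cite{jeon2}) you avoid the trace-integrality issue entirely and get $h(P)=O(1)$ without ever needing the refined decay $h(t_i)=O(1/(|p_i|+|q_i|))$; the only thing to verify is membership of Dehn filling points in $\mathcal{X}^{oa}$ (or a suitable modification), which is the genuine geometric input.
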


\subsection{Height} \label{height1}
The height $h(\alpha)$ of an algebraic number $\alpha$ is defined as follows:
\begin{definition} 
Let $K$ be an any number field containing $\alpha$, $V_K$ be the set of places of $K$, and 
$K_v, \mathbb{Q}_v$ be the completions at $v\in V_K$. Then 
\begin{equation*}
h(\alpha):=\sum_{v \in M_K} \log\;\big(\emph{max} \{1, |\alpha|_v\}\big)^{[K_v:\mathbb{Q}_v]/[K:\mathbb{Q}]}.
\end{equation*} 
\end{definition}
Note that the above definition does not depend on the choice of $K$. 
That is, for any number field $K$ containing $\alpha$, it gives us the same value. 
If $\alpha=(\alpha_1,...,\alpha_n) \in \mathbb{\overline{Q}}^{n}$ is an $n$-tuple of algebraic numbers, the definition can be generalized as follows: 
\begin{definition} \label{height2}
Let $K$ be an any number field containing $\alpha_1,...,\alpha_n$, $V_K$ be the set of places of $K$, and 
$K_v, \mathbb{Q}_v$ be the completions at $v$. Then
\begin{equation*}
h(\alpha)=\sum_{v \in V_K} \log\;\big(\emph{max} \{1, |\alpha_1|_v,...,|\alpha_n|_v\}\big)^{[K_v:\mathbb{Q}_v]/[K:\mathbb{Q}]}.
\end{equation*} 
\end{definition}

The following theorem due to D. Northcott.   
\begin{theorem}[D. Northcott]\label{height}
There are only finitely many algebraic numbers of bounded height and degree.
\end{theorem}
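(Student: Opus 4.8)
The plan is the classical proof: show that an algebraic number of bounded height and degree has a minimal polynomial with uniformly bounded integer coefficients, so that only finitely many such polynomials — and hence only finitely many such numbers — can occur.

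Concretely, let $\alpha\in\overline{\mathbb{Q}}$ with $h(\alpha)\le B$ and $d:=[\mathbb{Q}(\alpha):\mathbb{Q}]\le D$, and let $f(x)=a_dx^d+a_{d-1}x^{d-1}+\cdots+a_0\in\mathbb{Z}[x]$ be its minimal polynomial, normalized to be primitive with $a_d\ge 1$; write $\alpha_1,\dots,\alpha_d$ for its roots in $\mathbb{C}$. The key input is the standard identity $h(\alpha)=\tfrac1d\log M(f)$, where $M(f):=a_d\prod_{i=1}^d\max(1,|\alpha_i|)$ is the Mahler measure of $f$. This is obtained by unwinding Definition \ref{height2} with $K=\mathbb{Q}(\alpha)$: the archimedean places of $K$ correspond to the embeddings $\mathbb{Q}(\alpha)\hookrightarrow\mathbb{C}$ and reproduce $\tfrac1d\sum_i\log^+|\alpha_i|$, while the non-archimedean places reproduce $\tfrac1d\log a_d$ (the latter via a Newton-polygon computation that uses the primitivity of $f$, i.e.\ $\gcd(a_0,\dots,a_d)=1$). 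Granting this, $M(f)=e^{d\,h(\alpha)}\le e^{DB}$.

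Next I would bound the coefficients of $f$. Since $a_{d-k}$ equals $a_d$ times the $k$-th elementary symmetric function of the $\alpha_i$ up to sign, and that function is a sum of $\binom dk$ products of $k$ of the roots, one gets
\[
|a_{d-k}|\;=\;a_d\,\bigl|e_k(\alpha_1,\dots,\alpha_d)\bigr|\;\le\;\binom dk\,a_d\prod_{|\alpha_i|>1}|\alpha_i|\;\le\;\binom dk M(f)\;\le\;\binom{D}{\lfloor D/2\rfloor}e^{DB}
\]
for every $k=0,1,\dots,d$ (the case $k=0$ gives in particular $a_d\le M(f)\le e^{DB}$). Thus every $\alpha$ under consideration is a root of a polynomial in $\mathbb{Z}[x]$ of degree at most $D$ all of whose coefficients have absolute value at most $\binom{D}{\lfloor D/2\rfloor}e^{DB}$. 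There are only finitely many such polynomials, each has at most $D$ roots, so the set of admissible $\alpha$ is finite.

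The argument has no genuinely hard step; the only substantive ingredient is the height–Mahler-measure identity, whose non-archimedean half is the one point requiring care. One can also bypass Mahler measures entirely: working with the monic minimal polynomial $g=f/a_d\in\mathbb{Q}[x]$, the invariance $h(\alpha_i)=h(\alpha)$ under Galois conjugation together with the elementary inequalities $h(xy)\le h(x)+h(y)$ and $h(x_1+\cdots+x_m)\le\log m+\sum_k h(x_k)$ (both immediate from Definition \ref{height2} by estimating $\max(1,|\cdot|_v)$ place by place) bounds the height of each rational coefficient $c_j$ of $g$ by a constant depending only on $D$ and $B$; since $h(r/s)=\log\max(|r|,|s|)$ for $r/s$ in lowest terms, this again leaves only finitely many candidate polynomials. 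Either way, it is worth stressing that the degree hypothesis is indispensable: bounding $h$ alone does not bound the coefficients — for instance $2^{1/m}$ has height $(\log 2)/m\to 0$, and every root of unity has height $0$ — so it is precisely the bound $d\le D$ that caps both the number of coefficients and the binomial factors $\binom dk$.
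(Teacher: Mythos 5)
Your argument is correct: the Mahler-measure identity $h(\alpha)=\tfrac1d\log M(f)$, the resulting bound $|a_{d-k}|\le\binom dk M(f)$ on the coefficients of the minimal polynomial, and the finiteness of integer polynomials of bounded degree and coefficient size together give Northcott's theorem, and your alternative route via height bounds on the coefficients of the monic minimal polynomial is also sound. Note, however, that the paper does not prove this statement at all — it is quoted as a classical theorem of Northcott (standard references being, e.g., Bombieri--Gubler) — so there is no in-paper proof to compare against; what you have written is precisely the standard textbook proof, and your closing remark that the degree bound is indispensable (roots of unity and $2^{1/m}$ having small height) is an accurate observation.
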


In \cite{jeon1} \cite{jeon2}, the author proved the following theorem:
\begin{theorem}\label{19090803}
Let $\mathcal{M}$ be a cusped hyperbolic $3$-manifold and $\mathcal{X}$ be its A-polynomial. Then the height of any Dehn filling point of $\mathcal{X}$ is uniformly bounded. 
\end{theorem}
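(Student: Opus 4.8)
The plan is to bound the height of a Dehn filling point place by place, isolating the archimedean places — where the geometry of $\mathcal{X}$ (and, locally, the Neumann--Zagier parametrization) does the work — from the finite places, which one disposes of by an integrality argument.

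A Dehn filling point $P=(t_1^{-q_1},t_1^{p_1},\dots,t_n^{-q_n},t_n^{p_n})$ is an isolated point of the intersection of $\mathcal{X}$ with the algebraic subgroup $\{M_i^{p_i}L_i^{q_i}=1\ (1\le i\le n)\}$, both defined over a fixed number field, so its coordinates are algebraic; since $\gcd(p_i,q_i)=1$ one has $K:=\mathbb{Q}(P)=\mathbb{Q}(t_1,\dots,t_n)$. Because the filled manifold $\mathcal{M}_{(p_1/q_1,\dots,p_n/q_n)}$ is closed, the trace $t_i+t_i^{-1}$ of the holonomy of its $i$-th core geodesic is an algebraic \emph{integer} (traces of cocompact Kleinian groups are integral), so $t_i$ is a root of $x^2-(t_i+t_i^{-1})x+1$ and hence an algebraic unit; so then are all coordinates $t_i^{\pm p_i},t_i^{\pm q_i}$ of $P$. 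Thus the finite places contribute nothing to $h(P)$, and since at each archimedean place $\log^{+}$ of the maximum of the moduli of the coordinates of $\sigma(P)$ is at most $\max_i(|p_i|+|q_i|)\,\bigl|\log|\sigma(t_i)|\bigr|$, it suffices to prove: there is $C=C(\mathcal{X})$ such that for every embedding $\sigma\colon K\hookrightarrow\mathbb{C}$ and every $i$,
\begin{equation*}
\bigl|\log|\sigma(t_i)|\bigr|\ \le\ \frac{C}{|p_i|+|q_i|},
\end{equation*}
since this already gives $h(P)\le\frac1{[K:\mathbb{Q}]}\sum_{\sigma}\max_i(|p_i|+|q_i|)\bigl|\log|\sigma(t_i)|\bigr|\le C$.

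To prove that bound, fix an embedding $\sigma$. Then $\sigma(P)$ still lies on $\mathcal{X}$ and still satisfies the Dehn filling equations, so the real vector $\bigl(\log|\sigma(M_i)|,\log|\sigma(L_i)|\bigr)_i=\bigl(\lambda_i(-q_i,p_i)\bigr)_i$, with $\lambda_i:=\log|\sigma(t_i)|$, lies simultaneously in the amoeba $\mathcal{A}(\mathcal{X})\subset\mathbb{R}^{2n}$ and in the $n$-dimensional subspace $V_{(p,q)}=\{(\mu_i(-q_i,p_i))_i:\mu_i\in\mathbb{R}\}$. Now $\mathcal{A}(\mathcal{X})$ lies within a bounded neighbourhood of the rational polyhedral fan $\operatorname{Trop}(\mathcal{X})$ of dimension $n$, which has finitely many rays whose directions are governed by the geometry at infinity of $\mathcal{X}$ (ideal points of the character variety, i.e.\ boundary slopes). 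If each filling slope $p_i/q_i$ stays away from the finitely many rationals attached to those ray directions, then $V_{(p,q)}\cap\operatorname{Trop}(\mathcal{X})=\{0\}$ transversally, with an intersection angle bounded below independently of $(p,q)$; hence $\mathcal{A}(\mathcal{X})\cap V_{(p,q)}$ has uniformly bounded diameter, which forces $|\lambda_i|\sqrt{p_i^2+q_i^2}$, and a fortiori $(|p_i|+|q_i|)|\lambda_i|$, to be bounded. Near $(1,\dots,1)$ this is exactly the Neumann--Zagier estimate $u_i\approx\pm 2\pi i/(p_i+q_i\tau_i)$, here reappearing as the local shape of $\mathcal{A}(\mathcal{X})$ at the origin.

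The hard case — which I expect absorbs most of the work — is when some slope $p_i/q_i$ is close to one of those rationals, i.e.\ essentially to a boundary slope of $\mathcal{M}$; then $V_{(p,q)}$ nearly aligns with a ray of $\operatorname{Trop}(\mathcal{X})$ and the crude amoeba bound degenerates. The plan is to replace it by a direct Puiseux analysis at the corresponding end of $\mathcal{X}$: along that end one has local equations $M_i=\alpha z^{a}+\cdots$, $L_i=\beta z^{b}+\cdots$, so the Dehn filling relation becomes $z^{\,ap_i+bq_i}=\alpha^{-p_i}\beta^{-q_i}(1+\cdots)$, in which $ap_i+bq_i$ is a \emph{nonzero} integer because $\gcd(p_i,q_i)=1$. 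Reading off $\log|z|$, hence $\lambda_i$, from this relation: when $|ap_i+bq_i|$ is large the solutions cluster near $|z|=1$ and one recovers $|\lambda_i|=O\bigl(1/(|p_i|+|q_i|)\bigr)$, while when $|ap_i+bq_i|$ is small there are only boundedly many such solutions; in either case the contribution is controlled, at worst up to a bounded number of embeddings $\sigma$ for which one only gets $|\lambda_i|=O(1)$. To absorb those last exceptional embeddings one uses in addition a lower bound $[K:\mathbb{Q}]\gtrsim\max_i(|p_i|+|q_i|)$ for the degree of the field generated by a Dehn filling point — equivalently, that the Galois orbit of the geometric point inside the intersection of $\mathcal{X}$ with the Dehn filling subgroup, an intersection whose total degree is $\asymp\max_i(|p_i|+|q_i|)$, is either itself that large or disjoint from the ``end'' solutions. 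Making this uniform over all slopes — the interplay between the geometry of the ends of $\mathcal{X}$ and the arithmetic of $(p_i,q_i)$ — is the technical core of the argument.
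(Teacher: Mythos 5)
The paper does not actually prove this theorem: it is quoted with attribution to \cite{jeon1}\cite{jeon2}, so there is no in-text argument to compare yours against. Your proposal is therefore a genuine attempt at a proof, and the overall architecture (dispose of the finite places by an integrality argument, then bound every archimedean place by the geometry of $\mathcal{X}$ near its ends) is a reasonable strategy. But several steps as written do not hold up.

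First, the integrality step. The assertion ``traces of cocompact Kleinian groups are integral'' is false in general: Bass's theorem only gives integrality when the group admits no nontrivial splitting, and closed hyperbolic $3$-manifolds can (and do) have non-integral traces. To salvage this you would need an argument that the bad primes for $\mathcal{M}_{(p_1/q_1,\dots,p_n/q_n)}$ form a set bounded independently of $(p_i/q_i)$ — plausible for fillings of a fixed $\mathcal{M}$, but not a standing fact you may quote. Second, the pointwise archimedean bound $\bigl|\log|\sigma(t_i)|\bigr|\le C/(|p_i|+|q_i|)$ for every embedding $\sigma$ is too strong, and the Puiseux patch for slopes near boundary slopes does not establish it: from $z^{ap_i+bq_i}=\alpha^{-p_i}\beta^{-q_i}(1+\cdots)$ one gets $|z|\approx|\alpha|^{-p_i/(ap_i+bq_i)}|\beta|^{-q_i/(ap_i+bq_i)}$, and when $p_i/q_i$ drifts toward $-b/a$ the exponent $ap_i+bq_i$ shrinks while $|\alpha|^{-p_i}|\beta|^{-q_i}$ grows exponentially, so $\bigl|\log|z|\bigr|$ need not be $O(1/(|p_i|+|q_i|))$ at all. (Also, coprimality of $p_i,q_i$ does not force $ap_i+bq_i\ne 0$.) Third, and most seriously, the fallback device — the lower bound $[K:\mathbb{Q}]\gtrsim\max_i(|p_i|+|q_i|)$ — is not a lemma you may simply invoke: it is essentially the content of the very papers \cite{jeon1}\cite{jeon2} being cited, so the argument becomes circular exactly where it most needs input. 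Until that degree bound (or a replacement for it) is established independently, the exceptional embeddings cannot be absorbed and the height bound does not follow.
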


\subsection{Anomalous Subvarieties} \label{BHC1}

In this section, we identify $G^n$ with the non-vanishing of the coordinates $x_1,\dots,x_n$ in the affine $n$-space $\overline {\mathbb{Q}}^n$ or $\mathbb{C}^n$ (i.e.~$G^n=(\overline{\mathbb{Q}}^*)^n$ or $(\mathbb{C^*})^n$). 
An algebraic subgroup $H_{\Lambda}$ of $G^n$ is defined as the set of solutions satisfying equations $x_1^{a_1}\cdots x_n^{a_n}=1$ where the vector $(a_1,\dots,a_n)$ runs through a lattice $\Lambda \subset \mathbb{Z}^n$. 
If $\Lambda$ is primitive, then we call $H_{\Lambda}$ an irreducible algebraic subgroup or algebraic torus. 
By a coset $K$, we mean a translate $gH$ of some algebraic subgroup $H$ by some $g \in G^n$. 
For more properties of algebraic subgroups and $G^n$, see \cite{bg}.
\begin{definition} \label{anomalous1} 
An irreducible subvariety $\mathcal{Y}$ of $\mathcal{X}$ is anomalous if it has positive dimension and lies in a coset $K$ in $G^n$ satisfying
\begin{equation*}
\emph{dim } \mathcal{X} + \emph{dim } K -n+1\leq \emph{dim } \mathcal{Y} .
\end{equation*}
If $K$ is an algebraic subgroup, then we call $\mathcal{Y}$ a torsion anomalous subvariety of $\mathcal{X}$. 
\end{definition}

The quantity $\text{dim } \mathcal{X}+\text{dim }K-n$ is what one would expect for the dimension of $\mathcal{X}\cap K$ when $\mathcal{X}$ and $K$ were in general position. Thus one can understand anomalous subvarieties of $\mathcal{X}$ as the ones that are unusually large intersections with cosets of algebraic subgroups of $G^n$.

\begin{definition} \label{oa}
The deprived set $\mathcal{X}^{oa}$ (resp. $\mathcal{X}^{ta}$) is what remains of $\mathcal{X}$ after removing all anomalous subvarieties (resp. all torsion anomalous subvarieties).
\end{definition}

\begin{definition} \label{maxano}
An anomalous subvariety of $\mathcal{X}$ is maximal if it is not contained in a strictly larger anomalous subvariety of $\mathcal{X}$.
\end{definition}

The following theorem due to Bombieri-Masser-Zannier tells us the structure of anomalous subvarieties \cite{za}.

\begin{theorem}\label{struc}
Let $\mathcal{X}$ be an irreducible variety in $G^n$ of positive dimension defined over $\mathbb{\overline Q}$. \\
(a) For any torus $H$ with 
\begin{equation} \label{dim1}
1\leq n- \emph{dim } H \leq \emph{dim } \mathcal{X},
\end{equation}
the union $Z_H$ of all subvarieties $\mathcal{Y}$ of $\mathcal{X}$ contained in any coset $K$ of $H$ with
\begin{equation} \label{dim2}
\emph{dim } H=n-(1+\emph{dim } \mathcal{X})+\emph{dim } \mathcal{Y} 
\end{equation}
is a closed subset of $\mathcal{X}$, and the product $HZ_H$ is not Zariski dense in $G^n$.\\
(b) There is a finite collection $\Psi=\Psi_{\mathcal{X}}$ of such tori $H$ such that every maximal anomalous subvariety $\mathcal{Y}$ of $\mathcal{X}$ is a component of $\mathcal{X} \cap gH$
for some $H$ in $\Psi$ satisfying \eqref{dim1} and \eqref{dim2} and some $g$ in $Z_H$. Moreover $\mathcal{X}^{oa}$ is obtained from $\mathcal{X}$ by removing the $Z_H$ of all $H$ in $\Psi$, 
and thus it is open in $\mathcal{X}$ with respect to the Zariski topology.
\end{theorem}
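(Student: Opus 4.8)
The plan is to follow the quotient-torus strategy underlying the work of Bombieri--Masser--Zannier. The central object is, for a torus $H\le G^n$, the quotient homomorphism $\phi_H:G^n\to G^n/H\cong G^{n-\dim H}$. The basic observation is that an irreducible subvariety $\mathcal{Y}\subseteq\mathcal{X}$ lies in a coset of $H$ if and only if $\phi_H(\mathcal{Y})$ is a single point, i.e.\ $\mathcal{Y}$ is contained in a fibre of $\phi_H$ restricted to $\mathcal{X}$. Thus all the subvarieties entering the definition of $Z_H$ are packaged inside the fibres of $\phi_H|_{\mathcal{X}}$, and I would control them through the upper semicontinuity of fibre dimension.

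For part (a), fix $H$ with $1\le n-\dim H\le\dim\mathcal{X}$, put $d=\dim H$ and $\delta=d+1+\dim\mathcal{X}-n$, so that \eqref{dim2} says precisely $\dim\mathcal{Y}=\delta$, and note $1\le\delta\le\dim\mathcal{X}$. Let $e=\dim\overline{\phi_H(\mathcal{X})}\le\min(\dim\mathcal{X},\,n-d)$, so the generic fibre of $\phi_H|_{\mathcal{X}}$ has dimension $\dim\mathcal{X}-e$. Set $W=\{y\in\overline{\phi_H(\mathcal{X})}:\dim(\mathcal{X}\cap\phi_H^{-1}(y))\ge\delta\}$, which is Zariski closed by upper semicontinuity of fibre dimension. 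I would then check $Z_H=\mathcal{X}\cap\phi_H^{-1}(W)$: any irreducible $\mathcal{Y}\subseteq\mathcal{X}$ of dimension $\delta$ contained in a coset of $H$ lies in a fibre $\phi_H^{-1}(y)$, forcing $y\in W$; conversely each component of $\mathcal{X}\cap\phi_H^{-1}(y)$ for $y\in W$ has dimension $\ge\delta$ and hence is covered by irreducible subvarieties of dimension exactly $\delta$ lying in the coset $\phi_H^{-1}(y)$. Thus $Z_H$ is closed. For non-density of $HZ_H$, note $\phi_H(Z_H)\subseteq W$; if $e=n-d$ then the generic fibre has dimension $\delta-1<\delta$, so $W$ is a proper closed subset of the irreducible variety $\overline{\phi_H(\mathcal{X})}$, while if $e<n-d$ then already $\overline{\phi_H(\mathcal{X})}\subsetneq G^{n-d}$; either way $HZ_H\subseteq\phi_H^{-1}(\overline{\phi_H(Z_H)})$ is contained in a proper subvariety of $G^n$.

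For part (b), I would first observe that, since chains of irreducible closed subsets of $\mathcal{X}$ have strictly increasing dimension bounded by $\dim\mathcal{X}$, every anomalous subvariety is contained in a maximal one. Given a maximal anomalous $\mathcal{Y}$, let $H_0$ be the identity component of the smallest algebraic subgroup a coset of which contains $\mathcal{Y}$; it is a torus, and the anomaly condition gives $\dim H_0\le n-1-\dim\mathcal{X}+\dim\mathcal{Y}$. Enlarge $H_0$ to a torus $H$ with $\dim H=n-1-\dim\mathcal{X}+\dim\mathcal{Y}$ (possible, since this value lies between $\dim H_0$ and $n-1$), so $\mathcal{Y}$ lies in a coset $gH$ and \eqref{dim2} holds. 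If $\mathcal{Y}$ were not a component of $\mathcal{X}\cap gH$, the component through it would be strictly larger and, by the choice of $\dim H$ together with the dimension estimate for intersections with cosets, would itself be anomalous, contradicting maximality; hence $\mathcal{Y}$ is a component of $\mathcal{X}\cap gH$, and choosing $g\in\mathcal{Y}$ gives $g\in\mathcal{Y}\subseteq Z_H$. Taking $\Psi$ to be the collection of tori $H$ arising in this way, the first assertion of (b) follows; moreover each $Z_H$ is a union of anomalous subvarieties and every anomalous subvariety lies in a maximal one, hence in some $Z_H$ with $H\in\Psi$, so $\mathcal{X}^{oa}=\mathcal{X}\setminus\bigcup_{H\in\Psi}Z_H$.

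The one remaining --- and genuinely hard --- point is that this $\Psi$ is finite; only then do part (a) and the displayed identity make $\mathcal{X}^{oa}$ Zariski open. Here I would argue by induction on $n$. For each admissible $\delta$ one must show that only finitely many tori $H$ of codimension $n-d=\dim\mathcal{X}-\delta+1$ can contribute a new maximal anomalous component. In the case $\dim\overline{\phi_H(\mathcal{X})}=n-d$, the jump locus $W$ from part (a) is a proper subvariety of $\overline{\phi_H(\mathcal{X})}$ whose degree can be bounded against $\deg\mathcal{X}$ and $\deg H$ by B\'ezout, and one leverages this to conclude that only tori $H$ of bounded degree occur, of which there are finitely many in the fixed ambient $G^n$; in the case $\dim\overline{\phi_H(\mathcal{X})}<n-d$, the image $\overline{\phi_H(\mathcal{X})}$ is a positive-codimension subvariety of a lower-dimensional torus, and one applies the inductive hypothesis after relating the anomalous subvarieties of $\mathcal{X}$ lying over it to those of $\overline{\phi_H(\mathcal{X})}$. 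I expect the main obstacle to be precisely this uniform degree bound on the relevant tori: it is the technical heart of the argument and requires, in addition to B\'ezout-type estimates, a geometry-of-numbers reduction of the lattices $\Lambda$ defining the tori $H$ to bounded bases, so that ``bounded degree'' becomes effective and finiteness can be extracted.
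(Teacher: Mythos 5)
This theorem is not proved in the paper: it is quoted verbatim as a known result of Bombieri--Masser--Zannier (cited as \cite{za}, the ``Anomalous subvarieties---structure theorems and applications'' paper), so there is no in-paper proof to compare against. What you have written is a blind attempt at the original BMZ structure theorem, and I will evaluate it as such.

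Your fibre-dimension framework for part (a) is the right idea, but the claimed identity $Z_H=\mathcal{X}\cap\phi_H^{-1}(W)$ is false as stated. Membership $y\in W$ only guarantees that \emph{some} component of $\mathcal{X}\cap\phi_H^{-1}(y)$ has dimension $\geq\delta$; if that fibre also has a lower-dimensional component, a point lying only in the small component belongs to $\mathcal{X}\cap\phi_H^{-1}(W)$ but not to $Z_H$, so the backward inclusion fails (and your assertion that ``each component\dots has dimension $\geq\delta$'' is an overstatement). The correct description is via \emph{local} fibre dimension:
\begin{equation*}
Z_H=\Big\{x\in\mathcal{X}:\dim_x\big((\phi_H|_{\mathcal{X}})^{-1}(\phi_H(x))\big)\geq\delta\Big\},
\end{equation*}
which is closed by Chevalley's upper semicontinuity of local fibre dimension. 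With that repair the closedness of $Z_H$ and the non-density of $HZ_H$ go through along the lines you sketch. Your reduction in part (b) --- taking $H_0$ to be the connected stabilizer of the minimal coset through a maximal anomalous $\mathcal{Y}$, enlarging it to a torus $H$ with $\dim H=n-1-\dim\mathcal{X}+\dim\mathcal{Y}$, and using maximality to force $\mathcal{Y}$ to be a component of $\mathcal{X}\cap gH$ --- is also sound.

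The genuine gap is the one you flag yourself: the finiteness of the collection $\Psi$. As you construct it, $\Psi$ is \emph{a priori} infinite (there is one torus per maximal anomalous subvariety, and there is a further choice in how to enlarge $H_0$), and your treatment of this point is a plan rather than an argument. This finiteness is not a technicality one can defer; it is the whole content of the structure theorem, and it is precisely what makes $\mathcal{X}^{oa}$ a Zariski-open subset rather than the complement of an arbitrary union of closed sets. Establishing it in BMZ requires a B\'ezout-type degree bound on the tori that can carry a maximal anomalous component, a geometry-of-numbers reduction (Minkowski/Siegel-type) to replace the defining lattices of these tori by bounded bases so that ``bounded degree'' yields actual finiteness, and a separate inductive treatment of the non-dominant case where $\dim\overline{\phi_H(\mathcal{X})}<n-\dim H$. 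None of these steps is carried out in the proposal, so as it stands it does not constitute a proof of the theorem; it reduces the theorem to its own hardest part.
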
 

Now we restrict our attention to A-polynomials of hyperbolic $3$-manifolds. Let $\mathcal{X}$ be the A-polynomial of an $n$-cusped hyperbolic $3$-manifold. If $\log \mathcal{N}_{\mathcal{X}}$ is defined by \eqref{19101103}, then, for each $i$ ($1\leq i\leq n$),
\begin{equation*}
u_i=0 \quad \Longleftrightarrow \quad  v_i=0.
\end{equation*}
Thus $M_i=1$ is equivalent to $L_i=1$ over $\mathcal{X}$, implying
\begin{equation*}
\mathcal{X}\cap (M_i=L_i=1)
\end{equation*}
is an anomalous subvariety of $\mathcal{X}$. 

Since we are only interested in $\mathcal{N}_{\mathcal{X}}$, we further refine Definition \ref{anomalous1} as follows:
\begin{definition} 
Let $\mathcal{X}$ be a generalized A-polynomial in $G^n$ and $\mathcal{N}_{\mathcal{X}}$ be its Dehn filling subset. For an algebraic coset $K$, we say $\mathcal{N}_{\mathcal{X}}$ intersects $K$ anomalously (or $\mathcal{N}_{\mathcal{X}}\cap K$ is an anomalous subset of $\mathcal{N}_{\mathcal{X}}$) if
\begin{equation*}
\emph{dim } \mathcal{N}_{\mathcal{X}} + \emph{dim } K -n+1\leq \emph{dim } (\mathcal{N}_{\mathcal{X}}\cap K) .
\end{equation*}
\end{definition}

\section{Preliminary Results}\label{19100206}
In this section, we prove several lemmas and theorems will play key roles in the proofs of the main theorems. 

\subsection{Torsion openess conjecture}\label{19100205}
As mentioned in Introduction, the following conjecture was proposed by Bombieri-Masser-Zannier \cite{za}:
\begin{conjecture}[Torsion openess conjecture]\label{19101108}
Let $\mathcal{X}$ be an algebraic variety and $\{H_i\}_{i\in \mathcal{I}}$ be the set of algebraic subgroups such that $\mathcal{X}\cap H_i$ is an anomalous subvariety of $\mathcal{X}$. Then there exist a finite number of algebraic subgroups $K_1, \dots, K_m$ such that 
\begin{equation*}
\bigcup_{i\in \mathcal{I}}\mathcal{X}\cap H_i\subset \bigcup_{j=1}^m\mathcal{X}\cap K_j
\end{equation*}
\end{conjecture}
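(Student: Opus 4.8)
The plan is to prove the Torsion Openness Conjecture only in the form needed for the paper, namely locally in a small neighborhood $\mathcal{N}_{\mathcal{X}}$ of the identity $(1,\dots,1)$ in a (generalized) A-polynomial $\mathcal{X}$, and to exploit the very explicit Neumann-Zagier parametrization \eqref{19100301} that is available there. The key structural input is Theorem \ref{struc}(b): for the variety $\mathcal{X}$ there is only a \emph{finite} collection $\Psi_{\mathcal{X}}$ of tori $H$ such that every maximal anomalous subvariety of $\mathcal{X}$ arises as a component of $\mathcal{X}\cap gH$ with $H\in\Psi_{\mathcal{X}}$ and $g\in Z_H$. So an infinite family $\{H_i\}_{i\in\mathcal{I}}$ of algebraic subgroups with $\mathcal{X}\cap H_i$ anomalous must, after possibly replacing each $H_i$ by the unique smallest torus containing the relevant maximal anomalous component, fall into finitely many "types" indexed by $H\in\Psi_{\mathcal{X}}$; within a fixed type the cosets $gH$ vary, but $g$ ranges over the fixed closed set $Z_H$, and the subgroups $H_i$ themselves are the ones making $g\in H_i$. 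The point is then to show that, locally near the identity, only finitely many \emph{distinct} such cosets actually meet $\mathcal{N}_{\mathcal{X}}$ in an anomalous subset.

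The main step is the following rigidity statement, to be proved by differentiating the defining relations at the origin. Suppose $\mathcal{N}_{\mathcal{X}}\cap H_i$ is anomalous, where $H_i$ is cut out by monomial relations $\prod_j M_j^{a_j}L_j^{b_j}=1$. Passing to logarithms via \eqref{mer} and using $v_j=\tau_j u_j+(\text{higher order})$ from \eqref{19100301}, the linear part of the defining equation of $\log(\mathcal{N}_{\mathcal{X}}\cap H_i)$ is $\sum_j (a_j+b_j\tau_j)u_j=0$. Because $\log\mathcal{N}_{\mathcal{X}}$ is a graph $v_j=v_j(u_1,\dots,u_n)$ over the $u$-coordinates, the intersection $\log\mathcal{N}_{\mathcal{X}}\cap H_i$ has the expected codimension $\operatorname{codim}H_i$ \emph{unless} the linear form $\sum_j(a_j+b_j\tau_j)u_j$ degenerates in a way forced by the nonlinear terms — and here the non-symmetric cusp shape hypothesis (equivalently, that the $\tau_j$ are sufficiently independent over $\mathbb{Q}$) kicks in: it forces the anomalous behavior to be "coordinate-wise", i.e. each $H_i$ must contain one of the finitely many subtori $M_j=L_j=1$ or a product of these. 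Thus the only anomalous subsets of $\mathcal{N}_{\mathcal{X}}$ near the identity are the obvious ones $\mathcal{N}_{\mathcal{X}}\cap\{M_j=L_j=1,\ j\in S\}$ for subsets $S\subseteq\{1,\dots,n\}$, of which there are finitely many. Taking $K_1,\dots,K_m$ to be these finitely many subgroups $\{M_j=L_j=1:j\in S\}$ completes the local statement; the finite list from $\Psi_{\mathcal{X}}$ handles the part of $\mathcal{X}$ away from the identity.

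I expect the hard part to be exactly the degeneration analysis in the second paragraph: ruling out "accidental" higher-order anomalous intersections. A priori an algebraic subgroup could fail to meet $\mathcal{N}_{\mathcal{X}}$ transversally not because of its linear part but because the higher-degree Neumann-Zagier coefficients $c_{j_1,\dots,j_n}$ conspire with the $a_j,b_j$; one must show this cannot happen for infinitely many distinct $(a_j,b_j)$ unless the subgroups stabilize into the finitely many $\{M_j=L_j=1\}$-type pieces. The tool for this is the combination of (i) the evenness and the specific shape of $\Phi$ in \eqref{19100301}, which constrains which monomials can appear, and (ii) the non-quadraticity / non-symmetry of the $\tau_j$, which prevents the linear forms $a_j+b_j\tau_j$ from vanishing or from being proportional across different $j$ for integer $(a_j,b_j)$. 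A secondary technical nuisance is bookkeeping between $\mathcal{X}$ and its generalized analogues $\chi_n^{(k)}$ — one must check that the projections $\mathrm{Pr}_i$ used to define generalized A-polynomials carry the Neumann-Zagier normal form \eqref{19100303} along, so that the same local argument applies verbatim; this is routine given the discussion already in the excerpt but needs to be said.
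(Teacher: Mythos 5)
Your proposal takes a genuinely different route from the paper and, as written, it has a gap that makes it fail.

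The paper's proof of Theorem~\ref{19090805} is much shorter and more elementary than what you sketch, and the difference is essential. The paper observes that in the log coordinates $u_j=\log M_j$, $v_j=\log L_j$, a monomial relation $\prod_j M_j^{a_j}L_j^{b_j}=1$ cutting out $H$ becomes a \emph{linear equation with right-hand side an integer multiple of $2\pi i$}, i.e.\ $\sum_j(a_ju_j+b_jv_j)=2\pi i\,m$. After normalizing so that each right-hand side is either $0$ or $2\pi i$, there are two cases. If all shifts vanish, $H$ already passes through $(1,\dots,1)$, and Theorem~\ref{struc}(b) immediately gives a finite list of maximal anomalous subvarieties through the identity. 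If some shift is $2\pi i$, one takes pairwise differences of the equations to manufacture a new algebraic subgroup $H'$ of one lower codimension that \emph{does} pass through $(1,\dots,1)$ and whose intersection with $\mathcal{N}_{\mathcal{X}}$ strictly contains $\mathcal{N}_{\mathcal{X}}\cap H$ with dimension one larger, hence is still anomalous; again Theorem~\ref{struc} finishes the argument. No cusp shape hypothesis, no Jacobian analysis, and no control over higher-order Neumann--Zagier coefficients are needed.

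Your proposal misses this reduction entirely and instead attempts a direct classification of anomalous intersections near the identity via the linear forms $a_j+b_j\tau_j$, invoking non-symmetric or non-quadratic cusp shapes. This is problematic for several reasons. First, Theorem~\ref{19090805} is stated for an arbitrary generalized A-polynomial with no hypothesis on the cusp shapes, so your argument would prove a weaker statement than what is needed. Second, your claimed conclusion --- that the only anomalous subsets of $\mathcal{N}_{\mathcal{X}}$ near the identity are of the form $\{M_j=L_j=1:j\in S\}$ --- is false in general: Lemma~\ref{19071803} and Lemma~\ref{19100901} exhibit anomalous intersections of the form $M_i^a=M_j^b,\ L_i^a=L_j^b$ when two cusp shapes coincide, as happens for $\mathcal{X}\times\mathcal{X}$, which is precisely the setting the paper cares about. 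Third, and most seriously, the ``hard part'' you flag --- ruling out accidental higher-order degeneracies among the Neumann--Zagier coefficients --- is not a technicality you can defer; it is a genuine obstacle that your route would have to surmount, and you do not indicate how. The paper's argument sidesteps it completely by not trying to classify anything: it only needs to know that $H$, or a modification $H'$, lands among the finitely many maximal anomalous subvarieties through $(1,\dots,1)$ supplied by Bombieri--Masser--Zannier. The cusp shape analysis you describe does appear in the paper, but as input to Lemma~\ref{19071803} and the main theorem of Section~\ref{19100209}, not in the proof of the torsion openness statement.
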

We resolve this conjecture locally near the identity for the A-polynomial of any cusped hyperbolic $3$-manifold. 
\begin{theorem}\label{19090805}
Let $\mathcal{X}$ be a generalized A-polynomial such that 
\begin{equation*}
\mathcal{X}\subset \mathbb{C}^{2n}(:=(M_1, L_1, \dots, M_n, L_n)).
\end{equation*} 
Let $\{H_i\}_{i\in\mathcal{I}}$ be the set of algebraic subgroups intersecting $\mathcal{N}_{\mathcal{X}}$ anomalously. Then there exists a finite number of algebraic subgroups $K_1, \dots, K_m$ such that
\begin{equation*}
\bigcup_{i\in \mathcal{I}}\mathcal{N}_{\mathcal{X}}\cap H_i\subset \bigcup_{j=1}^m \mathcal{N}_{\mathcal{X}}\cap K_j.
\end{equation*}
\end{theorem}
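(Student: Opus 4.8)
The plan is to reduce Theorem \ref{19090805} to a statement purely about the local holomorphic parametrization $\log\mathcal{N}_{\mathcal{X}}$ near the origin, and then exploit the very rigid Taylor-expansion structure \eqref{19100301}--\eqref{19100303} of the Neumann--Zagier potential. Concretely, an algebraic subgroup $H=H_\Lambda$ intersects $\mathcal{N}_{\mathcal{X}}$ anomalously iff, writing $H$ in the logarithmic coordinates $u_i,v_i$ as a system of linear equations $\sum_i(a_i u_i+b_i v_i)=0$, the intersection of $\log\mathcal{N}_{\mathcal{X}}$ with this linear subspace has dimension strictly larger than expected. Using the relations $v_i=u_i\tau_i(u_1,\dots,u_n)$ with $\tau_i(0,\dots,0)=\tau_i\in\mathbb{C}\setminus\mathbb{R}$, the linear part of such an equation is $\sum_i(a_i+b_i\tau_i)u_i=0$; I would first show that anomalousness forces, for each vector $(\underline a,\underline b)\in\Lambda$ in a generating set, that $a_i+b_i\tau_i$ be ``compatible'' with the higher-order NZ coefficients $c_{j_1,\dots,j_n}$ in a way that severely constrains $(\underline a,\underline b)$. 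This is where the non-vanishing and the evenness of $\Phi$ enter.

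First I would set up the dimension bookkeeping: $\dim\mathcal{N}_{\mathcal{X}}=n$ inside $G^{2n}$, so $\mathcal{N}_{\mathcal{X}}\cap H$ is anomalous precisely when $\dim H - (2n) + 1 + n \le \dim(\mathcal{N}_{\mathcal{X}}\cap H)$, i.e.\ when a codimension-$c$ subgroup meets $\mathcal{N}_{\mathcal{X}}$ in dimension $\ge n-c+1$. The key structural input is Theorem \ref{potential}(1): on $\log\mathcal{N}_{\mathcal{X}}$ the $v_i$ are determined by the $u_i$, so $\log\mathcal{N}_{\mathcal{X}}$ is a graph over the $u$-space, and intersecting with $H$ amounts to imposing analytic equations $F_k(u_1,\dots,u_n)=0$ on that $n$-dimensional polydisc. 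An anomalous intersection means these equations are far from independent; by the graph structure and the fact that the linear terms of the $F_k$ are $\sum_i(a^{(k)}_i+b^{(k)}_i\tau_i)u_i$, I would argue that the $\mathbb{Q}$-linear span of the vectors $(a^{(k)}_i+b^{(k)}_i\tau_i)_i$ has small rank, and then bootstrap through the higher-degree NZ terms to conclude that the lattice $\Lambda_H$ itself lies in a fixed finite union of ``special'' sublattices of $\mathbb{Z}^{2n}$ — exactly the sublattices cutting out the finitely many candidate $K_j$. The candidates $K_1,\dots,K_m$ would then be read off as the subgroups corresponding to these finitely many special sublattices (these are essentially the ``cusp shape relation'' subgroups $M_i=L_i=1$ and their combinations, plus whatever torsion-anomalous structure $\mathcal{X}$ itself carries, controlled by Theorem \ref{struc}(b) applied to $\mathcal{X}$).

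The heart of the argument, and the step I expect to be the main obstacle, is showing that there are only \emph{finitely many} special sublattices — equivalently, that one cannot produce an infinite family of genuinely distinct algebraic subgroups each meeting $\mathcal{N}_{\mathcal{X}}$ anomalously without their anomalous loci all being swept out by finitely many $K_j$. Here I would combine two ingredients: (i) the analyticity and the specific shape \eqref{19100301} of $\Phi$ — in particular that $\Phi$ is even in each argument and that its quadratic part is $\sum\tau_j u_j^2$ with $\tau_j$ non-real (and, under the non-symmetric cusp shape hypothesis when it is invoked downstream, the $\tau_j$ pairwise unrelated by $GL_2(\mathbb{Q})$) — to rule out ``accidental'' linear dependencies among the $a_i+b_i\tau_i$; and (ii) a Noetherianity / Zariski-closure argument in the spirit of Theorem \ref{struc}, transplanted from the algebraic variety $\mathcal{X}$ to the analytic set $\mathcal{N}_{\mathcal{X}}$, to package the infinitely many $H_i$ into finitely many maximal ones. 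The delicate point is that $\mathcal{N}_{\mathcal{X}}$ is only an analytic (not algebraic) subset, so I cannot directly quote Theorem \ref{struc}; instead I would pass to the Zariski closure of $\bigcup_i \mathcal{N}_{\mathcal{X}}\cap H_i$ inside $\mathcal{X}$, apply the finiteness in Theorem \ref{struc}(b) to that closure as a subvariety of $\mathcal{X}$, and then intersect back with the polydisc $\mathcal{N}_{\mathcal{X}}$, checking that each maximal torsion anomalous component of $\mathcal{X}$ that meets $\mathcal{N}_{\mathcal{X}}$ contributes one of the finitely many $K_j$. Verifying that this ``closure then intersect back'' step does not lose any of the $H_i$ — i.e.\ that every anomalous $\mathcal{N}_{\mathcal{X}}\cap H_i$ really is captured by a torsion anomalous subvariety of $\mathcal{X}$ and not by some spurious analytic phenomenon near the identity — is the crux, and is precisely where the explicit Neumann--Zagier expansion, rather than soft arguments, will be indispensable.
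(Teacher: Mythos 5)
You correctly identify the crux — that one must show every anomalous $\mathcal{N}_{\mathcal{X}}\cap H_i$ is captured by a torsion anomalous subvariety of $\mathcal{X}$ itself, so that the finiteness of Theorem \ref{struc}(b) can be invoked, and moreover that the capturing subvariety comes from an algebraic \emph{subgroup} rather than merely a coset. But your proposed resolution (fine analysis of the Neumann--Zagier expansion: evenness of $\Phi$, the non-real quadratic part $\sum\tau_ju_j^2$, non-symmetric cusp shape hypothesis) is not how the gap is bridged, and indeed none of those hypotheses even appear in the theorem statement, which is for an arbitrary generalized A-polynomial. The paper's proof of this theorem never touches the higher-order NZ coefficients at all.

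The missing idea is an elementary bookkeeping of the branch of logarithm. Writing $H$ in log coordinates gives a linear system $\sum_j(a_{kj}u_j+b_{kj}v_j)=2\pi i\,m_k$ with $m_k\in\mathbb{Z}$, and the inhomogeneous terms $2\pi i\,m_k$ are exactly what prevents $\mathcal{N}_{\mathcal{X}}\cap H$ from containing the identity and hence from being directly a torsion anomalous subvariety of $\mathcal{X}$ through $(1,\dots,1)$. The paper normalizes so each right-hand side is $2\pi i\epsilon_k$ with $\epsilon_k\in\{0,1\}$, and then, whenever some $\epsilon_k=1$, replaces $H$ by an algebraic subgroup $H'$ of codimension one less, obtained by taking differences of the inhomogeneous equations so that the new log system is homogeneous. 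Then $\mathcal{N}_{\mathcal{X}}\cap H\subset\mathcal{N}_{\mathcal{X}}\cap H'$, the latter contains $(1,\dots,1)$, and it is still anomalous for $H'$; since a maximal anomalous subvariety through the identity is a component of $\mathcal{X}\cap gK$ with $g\in K$ (so $gK=K$), Theorem \ref{struc}(b) yields finitely many candidate $K_j$ that are genuine subgroups, not cosets. Your ``Zariski-closure then intersect back'' plan does not by itself rule out that the relevant maximal anomalous components are of the form $\mathcal{X}\cap gK$ with $g$ varying over an infinite (positive-dimensional) family in $Z_K$, which is precisely the scenario the $H\mapsto H'$ reduction forecloses. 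Without that reduction your argument does not close.
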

\begin{proof}[Proof of the theorem]
Let $H\in \{H_i\}_{i\in \mathcal{I}}$ be defined by 
\begin{equation*}
\begin{gathered}
M_1^{a_{11}}L_1^{b_{11}}\cdots M_n^{a_{1n}}L_n^{b_{1n}}=1,\\
\cdots \\
M_1^{a_{k1}}L_1^{b_{k1}}\cdots M_n^{a_{kn}}L_n^{b_{kn}}=1.
\end{gathered}
\end{equation*}
Then $\mathcal{N}_{\mathcal{X}}\cap H$ is locally biholomorphic to the complex manifold defined by
\begin{equation}\label{19071602}
\begin{gathered}
a_{11}u_1+b_{11}v_1+\cdots +a_{1n}u_n+b_{1n}v_n=2\pi i m_{1},\\
\cdots \\
a_{k1}u_1+b_{k1}v_1+\cdots +a_{kn}u_n+b_{kn}v_n=2\pi i m_{k}
\end{gathered}
\end{equation}
where $m_{1}, \dots, m_{k}\in \mathbb{Z}$. For $m_{i}\neq0$, dividing the $i$\textsuperscript{th} equation of \eqref{19071602} by $m_{i}$ if necessary, we assume the equations in \eqref{19071602} are normalized as 
\begin{equation}\label{19071603}
\begin{gathered}
a'_{11}u_1+b'_{11}v_1+\cdots +a'_{1n}u_n+b'_{1n}v_n=2\pi i \epsilon_{1},\\
\cdots \\
a'_{k1}u_1+b'_{k1}v_1+\cdots +a'_{kn}u_n+b'_{kn}v_n=2\pi i \epsilon_{k}
\end{gathered}
\end{equation}
where $\epsilon_{i}\in \{0, 1\}$ and $a'_{ij}, b'_{ij}\in \mathbb{Q}$. 

If $\epsilon_i=0$ for all $i$, then $\mathcal{N}_{\mathcal{X}}\cap H$ is an anomalous subset of $\mathcal{N}_{\mathcal{X}}$ containing $(1, \dots, 1)$. By Theorem \ref{struc}, since there are only finitely many maximal anomalous subvarieties of $\mathcal{X}$ containing $(1, \dots, 1)$, we get the desired result.  

Otherwise, without loss of generality, we assume $\epsilon_i=1$ for $1\leq i\leq l$ and $\epsilon_i=0$ for $l+1\leq i\leq k$ . Then \eqref{19071603} is equivalent to 
\begin{equation}\label{19071604}
\begin{gathered}
a'_{11}u_1+b'_{11}v_1+\cdots +a'_{1n}u_n+b'_{1n}v_n=2\pi i,\\
a'_{i1}u_1+b'_{i1}v_1+\cdots +a'_{in}u_n+b'_{in}v_n=a'_{11}u_1+b'_{11}v_1+\cdots +a'_{1n}u_n+b'_{1n}v_n \quad (\text{for }2\leq i\leq l),\\
a'_{i1}u_1+b'_{i1}v_1+\cdots +a'_{in}u_n+b'_{in}v_n=0\quad (\text{for } l+1\leq i\leq k).
\end{gathered}
\end{equation}
Let $H'$ be an algebraic subgroup of codimension $k-1$ defined by
\begin{equation*}
\begin{gathered}
M_1^{c_i(a'_{i1}-a'_{11})}L_1^{c_i(b'_{i1}-b'_{11})}\cdots M_n^{c_i(a'_{in}-a'_{1n})}L_n^{c_i(b'_{in}-b'_{1n})}=1\quad (\text{for }2\leq i\leq l),\\
M_1^{c_ia'_{i1}}L_1^{c_ib'_{i1}}\cdots M_n^{c_ia'_{in}}L_n^{c_ib'_{in}}=1\quad (\text{for } l+1\leq i\leq k)
\end{gathered}
\end{equation*}
where $c_i$ is an integer such that 
\begin{equation*}
\begin{gathered}
c_i(a'_{i1}-a'_{11}, b'_{i1}-b'_{11},\dots, a'_{in}-a'_{1n}, b'_{in}-b'_{1n})\in \mathbb{Z}^{2n}\quad (2\leq i\leq l),\\
c_i(a'_{i1}, b'_{i1},\dots a'_{in}, b'_{in})\in \mathbb{Z}^{2n}\quad (l+1\leq i\leq k).
\end{gathered}
\end{equation*}
Then $\log(\mathcal{N}_{\mathcal{X}}\cap H')$ is the complex manifold defined by 
\begin{equation}
\begin{gathered}
(a'_{i1}-a'_{11})u_1+(b'_{i1}-b'_{11})v_1+\cdots +(a'_{in}-a'_{1n})u_n+(b'_{in}-b'_{1n})v_n=0 \quad (2\leq i\leq l),\\
a'_{i1}u_1+b'_{i1}v_1+\cdots +a'_{in}u_n+b'_{in}v_n=0\quad (l+1\leq i\leq k).
\end{gathered}
\end{equation}
Since $\mathcal{N}_{\mathcal{X}}\cap H$ is an anomalous subset of $\mathcal{N}_{\mathcal{X}}$, the dimension of $\log(\mathcal{N}_{\mathcal{X}}\cap H)$ defined by \eqref{19071604} strictly bigger than $n-k$. Since $\log(\mathcal{N}_{\mathcal{X}}\cap H')$ contains $(0,\dots, 0)$, we have
\begin{equation*}
\text{dim}\;\big(\log(\mathcal{N}_{\mathcal{X}}\cap H')\big)=\text{dim}\;\big(\log(\mathcal{N}_{\mathcal{X}}\cap H)\big)+1>n-k+1.
\end{equation*}
In conclusion, $\mathcal{N}_{\mathcal{X}}\cap H'$ is also an anomalous subset of $\mathcal{N}_{\mathcal{X}}$ containing $(1,\dots, 1)$. By Theorem \ref{struc}, there are only finitely many maximal anomalous subsets of $\mathcal{N}_{\mathcal{X}}$ containing $(1,\dots, 1)$ and so we get the desired result.
\end{proof}

The following theorem was proved by Bombieri-Masser-Zannier in \cite{BMZ0}:

\begin{theorem}\label{19090804}
Let $\mathcal{X}$ be a variety in $G^n$ of dimension $k\leq n-1$ defined over $\overline{\mathbb{Q}}$. Then for any $B\geq 0$ there are at most finitely many points $P$ in $\mathcal{X}^{ta}(\overline{\mathbb{Q}})\cap \mathcal{H}_{n-k-1}$ with $h(P)\leq B$ where $\mathcal{H}_{n-k-1}$ is the set of algebraic subgroups of dimension $n-k-1$ (or codimension $k+1$). 
\end{theorem}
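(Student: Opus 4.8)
The plan is to reduce the statement, via Northcott's theorem, to a bound on the \emph{degree} of the points involved, and then to extract that degree bound from the hypothesis $h(P)\le B$ by working with the lattice of multiplicative relations satisfied by $P$. Write $d=k=\dim\mathcal{X}$ and identify $G^n$ with $(\overline{\mathbb{Q}}^{*})^{n}$. First I would reduce to the case that $\mathcal{X}$ is irreducible and is \emph{not} contained in any proper algebraic subgroup of $G^n$; in the excluded case $\mathcal{X}$ is itself an anomalous subvariety, so $\mathcal{X}^{ta}=\emptyset$ and there is nothing to prove. By Theorem \ref{height}, since every point in question has height at most $B$, it suffices to show that all of these points lie in a \emph{finite} union $K_1\cup\cdots\cup K_m$ of proper algebraic subgroups of $G^n$ depending only on $\mathcal{X}$ and $B$: taking Galois conjugates, such a list bounds $[\mathbb{Q}(P):\mathbb{Q}]$, and Northcott then finishes, provided we also know $\mathcal{X}^{ta}\cap K_j$ is finite for each $j$.

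The argument then has three steps. (i) \emph{The intersection is isolated.} Given $P\in\mathcal{X}^{ta}$ with $h(P)\le B$ lying in a subgroup of dimension $n-d-1$, let $K_P$ be the smallest torsion coset containing $P$; then $K_P$ is a translate of a subtorus by a torsion point, $\dim K_P\le n-d-1$, and $P$ is generic in $K_P$. Any component of $\mathcal{X}\cap K_P$ through $P$ of positive dimension would lie in the torsion coset $K_P$ and satisfy $\dim\mathcal{X}+\dim K_P-n+1\le 0<\dim(\text{component})$, hence would be a torsion anomalous subvariety of $\mathcal{X}$ in the sense of Definition \ref{anomalous1}, contradicting $P\in\mathcal{X}^{ta}$; so $P$ is an isolated point of $\mathcal{X}\cap K_P$. (ii) \emph{Bounding the degree of the coset.} This is where the arithmetic enters: using $h(P)\le B$ one shows that $K_P$, or some other algebraic subgroup through $P$ of dimension $\le n-d-1$, may be chosen of degree (equivalently covolume of relation lattice) bounded only in terms of $\mathcal{X}$ and $B$. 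The relation lattice $\Lambda_P=\{\lambda\in\mathbb{Z}^{n}:P^{\lambda}=1\}$ has rank $\ge d+1$; one combines geometry of numbers in $\Lambda_P$ (Minkowski's theorem) with the height bound — converted through quantitative distribution results for points of small height, i.e.\ Bogomolov/Zhang type statements — and, for the residual torsion directions after passing to the quotient torus, Laurent's theorem on torsion points of subvarieties of tori. The outcome is a finite list $K_1,\dots,K_m$ of subgroups, each of dimension $\le n-d-1$, depending only on $\mathcal{X}$ and $B$, with every $P$ lying in one of them. (iii) \emph{Conclusion.} For each such $K_j$, the intersection $\mathcal{X}\cap K_j$ is a proper subvariety of $\mathcal{X}$ (else $\mathcal{X}$ is anomalous, already excluded), and every positive-dimensional component of it lies in the torsion coset $K_j$ with $\dim\mathcal{X}+\dim K_j-n+1\le 0$, hence is torsion anomalous and is deleted in forming $\mathcal{X}^{ta}$. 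So $\mathcal{X}^{ta}\cap K_j$ is contained in the finite set of isolated points of $\mathcal{X}\cap K_j$, and the union over $j=1,\dots,m$ is finite.

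The main obstacle is step (ii). The algebraic subgroups one sees a priori passing through $P$ have arbitrarily large degree, and turning the single scalar inequality $h(P)\le B$ into a genuine degree bound for a subgroup still containing $P$ is the only place where substantial number theory is used; in the baby case $n=2$, $d=1$ this step specializes to the finiteness of torsion points on a curve in $G^2$ that is not a torsion coset (Lang's theorem), which already requires equidistribution or a Galois-theoretic input rather than mere lattice geometry. Everything else is dimension bookkeeping, with the structure theorem for anomalous subvarieties (Theorem \ref{struc}) available in the background to organize the removal of torsion anomalous pieces.
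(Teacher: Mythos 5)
The paper does not prove Theorem \ref{19090804}; it is quoted directly from Bombieri--Masser--Zannier \cite{BMZ0}, so there is no in-paper argument against which to compare your sketch, and I will instead assess the reconstruction on its own terms. Your steps (i) and (iii) are correct dimension bookkeeping and match how results of this type are organized in the BMZ framework: since $\dim\mathcal{X}+\dim K-n+1\le 0$ once $\dim K\le n-k-1$, any positive-dimensional component of $\mathcal{X}\cap K$ is torsion anomalous in the sense of Definition \ref{anomalous1} and is removed in passing to $\mathcal{X}^{ta}$, so $P$ must be isolated in $\mathcal{X}\cap K$; and reducing finiteness, via Northcott (Theorem \ref{height}), to confinement in a finite union of subgroups each meeting $\mathcal{X}^{ta}$ in a finite set is the right overall shape.

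The genuine gap is your step (ii), and I want to be precise about why the tools you list do not close it. You hope to extract, from $h(P)\le B$ alone via geometry of numbers on the relation lattice $\Lambda_P$, an algebraic subgroup through $P$ of codimension at least $k+1$ whose degree is bounded in terms of $\mathcal{X}$ and $B$. This fails already in $G^2$: for $P=(\zeta,\zeta)$ with $\zeta$ a primitive $N$-th root of unity one has $h(P)=0$ for every $N$, yet a short computation with $\Lambda_P$ shows that every codimension-two algebraic subgroup through $P$ has degree at least $N$, which is unbounded. So the degree bound cannot come from the height hypothesis together with Minkowski; it must exploit $P\in\mathcal{X}^{ta}$ and the geometry of $\mathcal{X}$ itself. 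Your appeal to Bogomolov/Zhang is also off target, since those results concern points of height tending to zero rather than height bounded by an arbitrary fixed $B$. Laurent's theorem is indeed one of the ingredients in the actual BMZ proof --- it is exactly what tames the root-of-unity pathology above --- but it is a starting point rather than a patch, and the inductive descent that simultaneously produces a bounded-degree subgroup and drops the ambient dimension is entirely absent from your sketch. As written, step (ii) names the supporting cast without supplying the argument; it is a gap, not a sketch that could be mechanically completed.
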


Combining the above theorem with Theorems \ref{19090803} and \ref{19090805}, we finally get the following theorem, which will be crucial in the proofs of the main theorems. 

\begin{theorem}\label{19090806}
Let $\mathcal{X}$ be a generalized A-polynomial and $\{P_i\}_{i\in\mathbb{N}}$ be a set of Dehn filling points in $\mathcal{N}_{\mathcal{X}}$ such that 
\begin{equation*}
\lim_{i\rightarrow \infty}P_i=(1, \dots, 1).
\end{equation*}
If each $P_i$ is contained in an algebraic subgroup of codimension $n+1$, then there exist a finite number of algebraic subgroups $K_1, \dots, K_m$ such that 
\begin{equation*}
P_i\in \bigcup_{j=1}^m \mathcal{N}_{\mathcal{X}}\cap K_j
\end{equation*} 
for all $P_i$ except for possibly finitely many. 
\begin{proof}
By Theorem \ref{19090803}, the height of $P_i$ is uniformly bounded. By Theorem \ref{19090804}, possibly except for finitely many, most $P_i$ are contained in torsion anomalous subsets of $\mathcal{N}_{\mathcal{X}}$. Now the conclusion follows from Theorem \ref{19090805}. 
\end{proof}
\end{theorem}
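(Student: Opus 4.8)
The plan is to assemble Theorem \ref{19090806} purely from the three ingredients already in hand: the height bound (Theorem \ref{19090803}), the Bombieri--Masser--Zannier finiteness for torsion-anomalous points (Theorem \ref{19090804}), and the local torsion-openness result (Theorem \ref{19090805}). First I would record the ambient setup: $\mathcal{X}$ is a generalized A-polynomial of dimension $n$ sitting in $G^{2n}$, so a codimension-$(n+1)$ algebraic subgroup $H$ has dimension $2n-(n+1)=n-1$, and for a point $P\in\mathcal{X}$ the expected dimension of $\mathcal{X}\cap H$ is $n+(n-1)-2n=-1$; in particular $P$ lying in such an $H$ is already a ``larger than expected'' intersection. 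With $k:=\dim\mathcal{X}=n$ and ambient dimension $2n$, the codimension $k+1=n+1$ matches exactly the hypothesis of Theorem \ref{19090804} with $n$ there replaced by $2n$ and $k$ there equal to $n$.

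Next I would split $\{P_i\}$ according to whether each $P_i$ lies in $\mathcal{X}^{ta}$ or not. For the points in $\mathcal{X}^{ta}$: by Theorem \ref{19090803} the heights $h(P_i)$ are bounded by a single constant $B$ depending only on $\mathcal{X}$, and every $P_i$ is by hypothesis in some algebraic subgroup of codimension $n+1$, i.e.\ in $\mathcal{H}_{n-1}$ in the notation of Theorem \ref{19090804} (dimension $2n-(n+1)=n-1$). Hence Theorem \ref{19090804} applies verbatim and gives that only finitely many of the $P_i$ can lie in $\mathcal{X}^{ta}\cap\mathcal{H}_{n-1}$ with bounded height. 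Throw those finitely many away. For the remaining points: each $P_i\notin\mathcal{X}^{ta}$, so $P_i$ lies on some torsion-anomalous subvariety of $\mathcal{X}$; because $\lim P_i=(1,\dots,1)$, after discarding finitely many more we may assume every surviving $P_i$ lies in $\mathcal{N}_{\mathcal{X}}$, so it lies on a torsion-anomalous subset of $\mathcal{N}_{\mathcal{X}}$, i.e.\ $P_i\in\mathcal{N}_{\mathcal{X}}\cap H_i$ where $\mathcal{N}_{\mathcal{X}}\cap H_i$ is anomalous. Now Theorem \ref{19090805} supplies a \emph{finite} list $K_1,\dots,K_m$ of algebraic subgroups with $\bigcup_{i}\mathcal{N}_{\mathcal{X}}\cap H_i\subset\bigcup_{j=1}^m\mathcal{N}_{\mathcal{X}}\cap K_j$, and therefore $P_i\in\bigcup_{j=1}^m\mathcal{N}_{\mathcal{X}}\cap K_j$ for every surviving $P_i$, which is the assertion.

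The one genuine subtlety — and the step I would be most careful about — is the bookkeeping around ``anomalous'' versus ``torsion anomalous'' and which version of Theorem \ref{19090805} I am actually invoking. Theorem \ref{19090804} isolates the $P_i$ that fail to be in $\mathcal{X}^{ta}$, i.e.\ those lying on torsion-anomalous subvarieties, whereas Theorem \ref{19090805} as stated controls \emph{all} algebraic subgroups intersecting $\mathcal{N}_{\mathcal{X}}$ anomalously; I would note that a torsion-anomalous subset is in particular an anomalous subset (a subgroup is a coset of itself), so the hypothesis of Theorem \ref{19090805} is met for exactly the $H_i$ produced in the previous paragraph, and the finitely many $K_j$ it outputs do the job. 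I would also spell out that the dimension count ``codimension $n+1$ in $G^{2n}$'' is precisely what forces each such $P_i$ to be, locally near $(1,\dots,1)$, a zero-dimensional anomalous intersection and hence to sit inside a positive-dimensional anomalous subvariety of $\mathcal{N}_{\mathcal{X}}$ — this is where the hypothesis $\lim P_i=(1,\dots,1)$ is used, to guarantee we are in the regime $\mathcal{N}_{\mathcal{X}}$ where Theorems \ref{19101204} and \ref{19090805} live. No new estimates are needed; the content is entirely in chaining the three cited theorems with the right index conventions.
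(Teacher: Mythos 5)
Your argument is correct and is precisely the paper's argument, merely unpacked: height bound (Theorem \ref{19090803} / \ref{19101204}) plus Bombieri--Masser--Zannier (Theorem \ref{19090804}, with the dimension count $k=n$, ambient $2n$, codimension $k+1=n+1$ exactly as you record) to discard the finitely many $P_i$ in $\mathcal{X}^{ta}$, then the local torsion-openness result (Theorem \ref{19090805}) to cover the rest by finitely many $\mathcal{N}_{\mathcal{X}}\cap K_j$. The extra care you take — spelling out the index bookkeeping, noting that a torsion subgroup is in particular a coset so Theorem \ref{19090805} applies, and citing Theorem \ref{19101204} rather than \ref{19090803} since $\mathcal{X}$ is a generalized A-polynomial — is all in the right direction and does not change the route.
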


\subsection{Miscellaneous Lemmas}
In this subsection, we prove two lemmas required for the proof of the main theorems. The proofs of these lemmas are purely computational and elementary, and so a trusting reader can skip ahead at first reading.
\begin{lemma}\label{17052301}
Let 
\begin{gather}\label{17052201}
\left( \begin{array}{cccc}
a_1 & b_1 & c_1 & d_1  \\
a_2 & b_2 & c_2 & d_2 
\end{array} 
\right)
\end{gather}
be an integer matrix of rank $2$, and $\tau$ be a non-quadratic number. If the rank of the following $(2\times 2)$-matrix  
\begin{equation}\label{17052202}
\left( \begin{array}{cc}
a_1+b_1\tau & c_1+d_1\tau  \\
a_2+b_2\tau & c_2+d_2\tau 
\end{array} \right)
\end{equation}
is equal to $1$, then \eqref{17052201} is either of the following forms:
\begin{gather}\label{17052205}
\left( \begin{array}{cccc}
a_1 & b_1 & ma_1 & mb_1  \\
a_2 & b_2 & ma_2 & mb_2 
\end{array}\right)
\end{gather}
for some $m\in\mathbb{Q}$ or 
\begin{gather}\label{17052209}
\left( \begin{array}{cccc}
0 & 0 & c_1 & d_1  \\
0 & 0 & c_2 & d_2 
\end{array}\right).
\end{gather}
\end{lemma}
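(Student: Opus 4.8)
The plan is to translate the rank-one condition on the matrix in \eqref{17052202} into a system of polynomial equations in $\tau$, and then exploit the hypothesis that $\tau$ is non-quadratic to force the coefficients of powers of $\tau$ to vanish separately. Concretely, rank of \eqref{17052202} being $1$ means the $2\times 2$ determinant vanishes:
\begin{equation*}
(a_1+b_1\tau)(c_2+d_2\tau)-(a_2+b_2\tau)(c_1+d_1\tau)=0.
\end{equation*}
Expanding in $\tau$ gives a quadratic $A\tau^2+B\tau+C=0$ with $A=b_1d_2-b_2d_1$, $B=(a_1d_2-a_2d_1)+(b_1c_2-b_2c_1)$, and $C=a_1c_2-a_2c_1$, all integers. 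Since $\tau$ is non-quadratic, it satisfies no nonzero polynomial with rational (equivalently integer) coefficients of degree $\le 2$, so we must have $A=B=C=0$.

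Next I would convert these three vanishing conditions into statements about proportionality of the columns of \eqref{17052201}, paying attention to the degenerate cases. Write the four columns of \eqref{17052201} as $\mathbf{a}=(a_1,a_2)^T$, $\mathbf{b}=(b_1,b_2)^T$, $\mathbf{c}=(c_1,c_2)^T$, $\mathbf{d}=(d_1,d_2)^T$. Then $A=\det(\mathbf{b}\,\mathbf{d})=0$, $C=\det(\mathbf{a}\,\mathbf{c})=0$, and $B=\det(\mathbf{a}\,\mathbf{d})+\det(\mathbf{b}\,\mathbf{c})=0$. The first of these says $\mathbf{b}$ and $\mathbf{d}$ are proportional over $\mathbb{Q}$ (or one of them is zero); the second says $\mathbf{a}$ and $\mathbf{c}$ are proportional over $\mathbb{Q}$ (or one is zero). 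I would split into cases according to which of $\mathbf{a},\mathbf{b}$ vanish. If $\mathbf{a}=\mathbf{b}=\mathbf{0}$ then the whole matrix has its first two columns zero, but then \eqref{17052201} has rank $\le 2$ coming only from $\mathbf{c},\mathbf{d}$ — this is exactly form \eqref{17052209} (after checking the rank-$2$ hypothesis forces $\mathbf{c},\mathbf{d}$ independent, which is automatic). If instead, say, $\mathbf{b}\ne\mathbf{0}$, then $\mathbf{d}=m\mathbf{b}$ for a unique $m\in\mathbb{Q}$ (allowing $m=0$); feeding $\mathbf{d}=m\mathbf{b}$ into $B=0$ gives $m\det(\mathbf{a}\,\mathbf{b})+\det(\mathbf{b}\,\mathbf{c})=0$, i.e. $\det(\mathbf{b},\,\mathbf{c}-m\mathbf{a})=0$, so $\mathbf{c}-m\mathbf{a}$ is proportional to $\mathbf{b}$. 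Combined with $\det(\mathbf{a}\,\mathbf{c})=0$ (so $\mathbf{c}\parallel\mathbf{a}$ when $\mathbf{a}\ne\mathbf{0}$), a short linear-algebra argument pins down $\mathbf{c}=m\mathbf{a}$, giving form \eqref{17052205}. The symmetric subcase $\mathbf{a}\ne\mathbf{0}$ is handled the same way with roles swapped, and one reconciles overlaps. I would also need to dispose of the possibility $\mathbf{a}=\mathbf{0}$, $\mathbf{b}\ne\mathbf{0}$ (or vice versa): if $\mathbf{a}=\mathbf{0}$ but $\mathbf{b}\ne\mathbf{0}$, then $C=0$ is automatic, $\mathbf{d}=m\mathbf{b}$, and $B=\det(\mathbf{b}\,\mathbf{c})=0$ forces $\mathbf{c}=m'\mathbf{b}$; rescaling shows this is again of type \eqref{17052205} with the $\mathbf{a}$-column zero (which is the $a_1=a_2=0$ specialization of \eqref{17052205}).

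The main obstacle I anticipate is not any single deep step but the careful bookkeeping of the degenerate cases: making sure that when some column vanishes the proposed normal forms \eqref{17052205} and \eqref{17052209} still genuinely cover the situation, and that the rational scalar $m$ is consistent across the three determinant relations (in particular that the same $m$ works for both the $\mathbf{c}$–$\mathbf{a}$ and $\mathbf{d}$–$\mathbf{b}$ proportionalities — this is precisely what the cross-term relation $B=0$ buys us). One must also use the rank-$2$ hypothesis on \eqref{17052201} at the right moment: it is what rules out the trivial case where all four columns are proportional (which would a priori also solve $A=B=C=0$ but is incompatible with rank $2$), and it is what guarantees, in form \eqref{17052209}, that $(c_1,d_1),(c_2,d_2)$ are linearly independent. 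Everything else is the elementary observation that a non-quadratic real (or complex) number kills integer quadratics, which is the crux that makes the three coefficient equations available.
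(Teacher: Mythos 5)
Your proof follows the same route as the paper's: expand the vanishing $2\times 2$ determinant as a quadratic in $\tau$, invoke non-quadraticity to force all three coefficients $A,B,C$ to vanish, then do casework. Your reformulation in terms of the column vectors $\mathbf{a},\mathbf{b},\mathbf{c},\mathbf{d}$ and the observation that $A=0,\ B=0$ combine to give $\det(\mathbf{b},\,\mathbf{c}-m\mathbf{a})=0$ is a cleaner way to organize the same casework that the paper carries out entry-by-entry. The main case ($\mathbf{a},\mathbf{b}\ne\mathbf{0}$, using $B=0$ to force the same scalar $m$ and the rank hypothesis to rule out $\mathbf{a}\parallel\mathbf{b}$) and the case $\mathbf{a}=\mathbf{b}=\mathbf{0}$ are correct and match the paper.

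There is, however, a slip in the subcase $\mathbf{a}=\mathbf{0},\ \mathbf{b}\ne\mathbf{0}$. You derive $\mathbf{c}=m'\mathbf{b}$ and $\mathbf{d}=m\mathbf{b}$ and then assert that ``rescaling shows this is again of type \eqref{17052205} with the $\mathbf{a}$-column zero.'' That is not so: the $a_1=a_2=0$ specialization of \eqref{17052205} forces $\mathbf{c}=m\mathbf{a}=\mathbf{0}$, whereas you have only $\mathbf{c}=m'\mathbf{b}$, which is generally nonzero. What actually happens is that with $\mathbf{a}=\mathbf{0}$ and $\mathbf{c},\mathbf{d}$ both multiples of $\mathbf{b}$, the two rows of \eqref{17052201} are $b_1(0,1,m',m)$ and $b_2(0,1,m',m)$, hence proportional, so the matrix has rank at most $1$ — this subcase is vacuous by the rank-$2$ hypothesis. (The paper's subcases $a_1=a_2=0$ with some $b_i\neq 0$ arrive at exactly this contradiction.) The lemma's conclusion is not endangered because the case cannot occur, but the justification as you wrote it is incorrect; replace the ``rescaling'' claim with the observation that the rank hypothesis excludes this subcase, and likewise for the symmetric subcase $\mathbf{b}=\mathbf{0},\ \mathbf{a}\neq\mathbf{0}$.
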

Note that since \eqref{17052201} is a matrix of rank $2$, $a_1b_2-a_2b_1\neq 0$ in \eqref{17052205} and $c_1d_2-c_2d_1\neq 0$ in \eqref{17052209}.
\begin{proof}
Since the rank of \eqref{17052202} is $1$, we have 
\begin{equation*}
(a_1+b_1\tau)(c_2+d_2\tau)=(a_2+b_2\tau)(c_1+d_1\tau).
\end{equation*}
By the assumption, $\tau$ is not quadratic, so 
\begin{gather}
a_1c_2=a_2c_1,\label{17052206}\\
b_1d_2=b_2d_1,\label{17052207}\\
b_1c_2+a_1d_2=a_2d_1+b_2c_1.\label{17052203}
\end{gather}
\begin{enumerate} 
\item If none of $a_i,b_i,c_i,d_i$ ($i=1,2$) are zero, then there exist $m,n\in\mathbb{Q}$ such that  
\begin{equation}\label{17052204}
\begin{gathered}
(a_1,a_2)=m(c_1,c_2),\quad (b_1,b_2)=n(d_1,d_2).
\end{gathered}
\end{equation}
By \eqref{17052203}, we get 
\begin{equation*}
nd_1c_2+mc_1d_2=mc_2d_1+nd_2c_1,
\end{equation*}
which is equivalent to 
\begin{equation*}
(n-m)(d_1c_2-d_2c_1)=0.
\end{equation*}
\begin{enumerate}
\item If $m=n$, then \eqref{17052201} is of the form given in \eqref{17052205}. 
\item If $d_1c_2-d_2c_1=0$, then there exists $l\in\mathbb{Q}$ such that $l(d_1,d_2)=(c_1,c_2)$. Combining with \eqref{17052204}, we get 
\begin{gather*}
(a_1,a_2)=m(c_1,c_2)=ml(d_1,d_2)=mln(b_1,b_2).
\end{gather*} 
This implies that $(a_1,b_1,c_1,d_1)=t(a_2,b_2,c_2,d_2)$ for some $t\in \mathbb{Q}$, and it contradicts the fact that \eqref{17052201} is a matrix of rank $2$. 
\end{enumerate}
\end{enumerate}
\quad\\
\item One of $a_i,b_i,c_i,d_i$ is equal to $0$. By symmetry, it is enough to consider the case $a_1=0$. If $a_1=0$, then $a_2=0$ or $c_1=0$ by \eqref{17052206}. \\
\begin{enumerate}
\item If $a_1=a_2=0$, then, by \eqref{17052207} and \eqref{17052203}, we get 
\begin{equation}\label{17052208}
b_1d_2=b_2d_1, \quad b_1c_2=b_2c_1.
\end{equation}
\begin{enumerate}
\item If none of $b_i, c_i, d_i$ are zero, then $(b_1,b_2)=m(c_1,c_2)$ and $(b_1,b_2)=n(d_1,d_2)$ for some $n,m\in \mathbb{Q}$. But this contradicts the fact \eqref{17052201} is a matrix of rank $2$. 
\item Suppose $b_1=0$ or $b_2=0$. Without loss of generality, we assume $b_1=0$. 
By \eqref{17052208}, if $b_2\neq 0$, then $c_1=d_1=0$ and so $a_1=b_1=c_1=d_1=0$. But this contradicts the fact that \eqref{17052201} is a matrix of rank $2$. 
Thus $b_2=0$ and the matrix \eqref{17052201} is of the following form:
\begin{gather*}
\left( \begin{array}{cccc}
0 & 0 & c_1 & d_1  \\
0 & 0 & c_2 & d_2 
\end{array} \right),
\end{gather*}
which is the case given in \eqref{17052209}. 
\item Suppose $b_1,b_2\neq 0$ and one of $c_i, d_i$ is zero. Without loss of generality, 
we consider $c_1=0$. Then, by \eqref{17052208}, $c_2=0$. 
Since $b_1d_2=d_1b_2$, we have $l(b_1,b_2)=(d_1,d_2)$ for some $l\in \mathbb{Q}$. But this contradicts the fact that the rank of \eqref{17052201} is $2$. 
\end{enumerate}
\quad \\
\item If $a_1=c_1=0$, then 
\begin{equation}\label{17052210}
b_1d_2=b_2d_1, \quad b_1c_2=a_2d_1.
\end{equation}
\begin{enumerate} 
\item If none of $b_i, d_i, c_2, d_2$ are zero, then $m(b_1,d_1)=(b_2,d_2)$ and $n(b_1,d_1)=(a_2,c_2)$ for some $m,n\in \mathbb{Q}$. So the matrix \eqref{17052201} is of the following form
\begin{gather*}
\left( \begin{array}{cccc}
0 & b_1 & 0 & d_1  \\
nb_1 & mb_1 & nd_1 & md_1 
\end{array} \right),
\end{gather*}
which is of the form given in \eqref{17052205}. 
\item If $b_1=0$, then $a_1=b_1=c_1=0$. Since \eqref{17052201} is a matrix of rank $2$, we assume $d_1\neq 0$. Then, by \eqref{17052210}, we have $a_2=b_2=0$ and so \eqref{17052201} is of the form given in \eqref{17052209}.  
\item If $d_1=0$, then $a_1=c_1=d_1=0$. Since \eqref{17052201} is a matrix of rank $2$, we assume $b_1\neq 0$. Then, by \eqref{17052210}, we have $c_2=d_2=0$ and so \eqref{17052201} is of the form given in \eqref{17052205}. 
\item If $b_2=0$ (with $b_1\neq 0$), then $d_2=0$ by \eqref{17052210}. 
Since we already dealt with the cases $a_2=0$ or $d_1=0$ above, we assume $a_2\neq 0$ and $d_1\neq 0$. By \eqref{17052210}, we have $m(b_1,d_1)=(a_2,c_2)$ for some $m\in \mathbb{Q}$, and so \eqref{17052201} is of the following form
\begin{gather*}
\left( \begin{array}{cccc}
0 & b_1 & 0 & d_1  \\
mb_1 & 0 & md_1 & 0 
\end{array} \right),
\end{gather*}
which is the case given in \eqref{17052205}. 
\item If $d_2=0$, then $b_2=0$ or $d_1=0$ by \eqref{17052210}. But we already considered these two cases above. 
\item If $c_2=0$, then $a_2=0$ or $d_1=0$ by \eqref{17052210}. We also covered these cases above.   
\end{enumerate}
\end{enumerate}
\end{proof}

\begin{lemma}\label{clai}
Let
\begin{gather}\label{17052401}
\left( \begin{array}{cccc}
a_1 & b_1 & c_1 & d_1  \\
a_2 & b_2 & c_2 & d_2 
\end{array} 
\right)
\end{gather}
be an integer matrix of rank $2$, and $\tau_1,\tau_2$ be algebraic numbers such that $1,\tau_1,\tau_2,\tau_1\tau_2$ are linearly independent over $\mathbb{Q}$. 
If the rank of the following $(2\times 2)$-matrix  
\begin{equation}\label{17052402}
\left( \begin{array}{cc}
a_1+b_1\tau_1 & c_1+d_1\tau_2  \\
a_2+b_2\tau_1 & c_2+d_2\tau_2 
\end{array} \right)
\end{equation}
is equal to $1$, then \eqref{17052401} is either 
\begin{gather}\label{17052501}
\left( \begin{array}{cccc}
a_1 & b_1 & 0 & 0  \\
a_2 & b_2 & 0 & 0 
\end{array}\right)
\end{gather}
or 
\begin{gather}\label{17052502}
\left( \begin{array}{cccc}
0 & 0 & c_1 & d_1  \\
0 & 0 & c_2 & d_2 
\end{array}\right).
\end{gather}
\end{lemma}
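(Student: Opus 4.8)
The plan is to expand the rank condition on \eqref{17052402} into a polynomial identity in $\tau_1,\tau_2$, peel off coefficients using the hypothesis that $1,\tau_1,\tau_2,\tau_1\tau_2$ are linearly independent over $\mathbb{Q}$, and then read the resulting equations as the vanishing of certain $2\times 2$ minors of \eqref{17052401}.

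First I would write the rank-$1$ condition on \eqref{17052402} as
\[
(a_1+b_1\tau_1)(c_2+d_2\tau_2)=(a_2+b_2\tau_1)(c_1+d_1\tau_2),
\]
expand both sides, and collect terms according to the monomials $1,\tau_1,\tau_2,\tau_1\tau_2$. Since the coefficients involved are integers and $1,\tau_1,\tau_2,\tau_1\tau_2$ are $\mathbb{Q}$-linearly independent, the coefficients of each monomial on the two sides must coincide, which yields the four relations
\[
a_1c_2=a_2c_1,\qquad b_1c_2=b_2c_1,\qquad a_1d_2=a_2d_1,\qquad b_1d_2=b_2d_1.
\]
This is where the present statement is slightly cleaner than Lemma \ref{17052301}: full independence of $1,\tau_1,\tau_2,\tau_1\tau_2$ eliminates the mixed relation $b_1c_2+a_1d_2=a_2d_1+b_2c_1$ that appears in the quadratic case. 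Writing $C_1,C_2,C_3,C_4\in\mathbb{Z}^2$ for the columns of \eqref{17052401}, these four relations say exactly that the minors on the column pairs $\{1,3\},\{2,3\},\{1,4\},\{2,4\}$ all vanish; equivalently each of $C_3,C_4$ is proportional to each of $C_1,C_2$.

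Then I would finish with elementary linear algebra. Since \eqref{17052401} has rank $2$, some $2\times 2$ minor is nonzero, and the four minors above all vanish, so either the $\{1,2\}$-minor $a_1b_2-a_2b_1$ or the $\{3,4\}$-minor $c_1d_2-c_2d_1$ is nonzero. In the first case $C_1,C_2$ are linearly independent; if $C_3\neq 0$ then, being proportional to both $C_1$ and $C_2$, it would force $C_1$ proportional to $C_2$, a contradiction, so $C_3=0$, and likewise $C_4=0$, giving form \eqref{17052501}. The second case is symmetric and gives form \eqref{17052502}.

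I do not expect a genuine obstacle here: the argument is routine once the coefficient comparison is justified. The only points needing a line of care are that the coefficient comparison is legitimate (the monomials are $\mathbb{Q}$-independent by hypothesis and the coefficients being compared are rational), and that the two cases above are exhaustive because not all six $2\times 2$ minors can vanish when the matrix has rank $2$.
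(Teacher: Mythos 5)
Your proof is correct, and after the initial coefficient comparison it takes a tidier route than the paper's. Both you and the paper expand the rank-$1$ condition and use the $\mathbb{Q}$-independence of $1,\tau_1,\tau_2,\tau_1\tau_2$ to obtain the four relations $a_1c_2=a_2c_1$, $b_1c_2=b_2c_1$, $a_1d_2=a_2d_1$, $b_1d_2=b_2d_1$. From there the paper proceeds by an entry-by-entry case analysis (WLOG $a_1=0$, then branch on whether $a_2=0$ or $c_1=d_1=0$, and so on), whereas you reinterpret the four relations as the vanishing of the column minors on $\{1,3\},\{2,3\},\{1,4\},\{2,4\}$, then invoke rank $2$ to force one of the two remaining minors $\{1,2\}$ or $\{3,4\}$ to be nonzero, and close with a one-line proportionality argument. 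Your version is shorter, avoids the branching, and makes the logical structure (four of six minors vanish, rank forces one of the other two to be nonzero) transparent; the paper's is more elementary but longer. One small point worth a phrase of care in a final write-up: in the step ``$C_3$ is proportional to both $C_1$ and $C_2$,'' you should note that in the case under consideration $C_1,C_2$ are both nonzero (this follows from the $\{1,2\}$-minor being nonzero), so vanishing of the $\{1,3\}$ and $\{2,3\}$ minors together with $C_3\neq 0$ genuinely yields $C_3=\lambda C_1=\mu C_2$ with $\lambda,\mu\neq 0$, whence the contradiction.
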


\begin{proof}
Since the rank of \eqref{17052402} is $1$, and as $1, \tau_1, \tau_2, \tau_1\tau_2$ are linearly independent over $\mathbb{Q}$, we have
\begin{align}
a_{1}c_{2}-c_{1}a_{2}=0, \label{17052403}\\
b_{1}c_{2}-c_{1}b_{2}=0, \label{17052404}\\
a_{1}d_{2}-d_{1}a_{2}=0, \label{17052405}\\
b_{1}d_{2}-d_{1}b_{2}=0. \label{17052406}
\end{align}
If none of $a_i, b_i, c_i, d_i$ ($i=1,2$) are zero, then \eqref{17052403}-\eqref{17052406} imply the two nonzero vectors $(a_1,b_1,c_1,d_1)$ and $(a_2,b_2,c_2,d_2)$ are linearly dependent over $\mathbb{Q}$. 
But this is impossible because \eqref{17052401} is a matrix of rank $2$. Without loss of generality, let us assume $a_1=0$. Then, by \eqref{17052403} and \eqref{17052405}, we have the following two cases:
\begin{enumerate}
\item $a_{2}=0$. 

In this case, the problem is reduced to the following:
\begin{gather}
b_{1}c_{2}-c_{1}b_{2}=0,\label{17052407}\\
b_{1}d_{2}-d_{1}b_{2}=0,\label{17052408}
\end{gather}
Similar to above, if none of $b_i,c_i,d_i$ ($i=1,2$) are zero, then $(b_1,c_1,d_1)$ and $(b_2,c_2,d_2)$ are linearly dependent over $\mathbb{Q}$ by \eqref{17052407} and \eqref{17052408}, contradicting the fact that \eqref{17052401} is a matrix of rank $2$. 
So at least one of $b_i,c_i,d_i$ ($i=1,2$) is zero and the situation is divided into the following two subcases. 
\begin{enumerate}
\item $b_1=0$ or $b_2=0$.

By symmetry, it is enough to consider the case $b_1=0$. If $b_1=0$, then $b_2=0$ or $c_1=0$ by \eqref{17052407} and $b_2=0$ or $d_1=0$ by \eqref{17052408}. 
If $b_2=0$, then we get the desired result (i.e.~$a_1=a_2=b_1=b_2=0$). Otherwise, if $c_1=d_1=0$, it contradicts the fact that $(a_1,b_1,c_1,d_1)$ is a nonzero vector.   

\item $c_1=0$ or $c_2=0$ or $d_1=0$ or $d_2=0$ (with $b_1,b_2\neq 0$). 

Here, also by symmetry, it is enough to consider the first case $c_1=0$. 
If $b_1,b_2\neq0$ and $c_1=0$, then $c_2=0$ by \eqref{17052407}, and $(d_1,d_2)=m(b_1,b_2)$ for some $m\in \mathbb{Q}$ by \eqref{17052408}. But this contradicts the fact that \eqref{17052401} is a matrix of rank $2$. 
\end{enumerate}
\item $a_2\neq0$ and so $c_1=d_1=0$. 

Since $(a_1,b_1,c_1,d_1)$ is a nonzero vector, $b_1$ is nonzero and $c_2=d_2=0$ by \eqref{17052404} and \eqref{17052406}. 
As a result, we get $c_1=c_2=d_1=d_2=0$, which is the second desired result of the statement. \end{enumerate}
So the lemma holds.
\end{proof}

\section{Main Result I}\label{19100209}
In this section, we prove Theorems \ref{19072401} and \ref{19100207}. As mentioned earlier, we formulate and prove a more general statement, Theorem \ref{19101107}, and then two theorems will follow as corollaries of this general one. 

Before proving Theorem \ref{19101107}, let us start with the following lemma. 
\begin{lemma}\label{19071803}
Let 
\begin{equation*}
\mathcal{X} \subset \mathbb{C}^{2n+2m}(:=(M_1, L_1, \dots, M_{n+m}, L_{n+m}))
\end{equation*}
be a generalized A-polynomial in $\chi_{n+m}^{(k)}$ and  
\begin{equation*}
\tau_{1},\dots, \tau_{n}, \tau_{n+1}(=\tau_1), \dots, \tau_{n+m}(=\tau_{m}) \quad (n\geq m)
\end{equation*}
be the cusp shapes of $\mathcal{X}$ with
\begin{equation*}
\tau_1, \dots, \tau_n
\end{equation*}
non-symmetric. If $H$ is an algebraic subgroup of codimension $2$ such that $\mathcal{N}_\mathcal{X}\cap H$ is an anomalous subset of $\mathcal{N}_\mathcal{X}$ containing $(1, \dots, 1)$, then $\mathcal{N}_\mathcal{X}\cap H$ is contained in either  
\begin{equation}
M_i=L_i=1\quad (1\leq i\leq n+m)
\end{equation}
or 
\begin{equation}
M_i^a=M_{n+i}^{b}, \quad L_i^a=L_{n+i}^b\quad (1\leq i\leq m)
\end{equation}
for some $a, b\in \mathbb{Z}$. 
\end{lemma}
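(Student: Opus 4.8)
The plan is to exploit the local holomorphic parametrization \eqref{19100301} of $\log\mathcal{N}_{\mathcal{X}}$ together with the linear-algebra Lemmas \ref{17052301} and \ref{clai}. Since $H$ has codimension $2$ and $\mathcal{N}_{\mathcal{X}}\cap H$ contains $(1,\dots,1)$, passing to logarithms shows that $\log(\mathcal{N}_{\mathcal{X}}\cap H)$ is the intersection of the manifold $v_j = \tau_j u_j + (\text{higher order})$ ($1\le j\le n+m$) with two linear equations
\begin{equation*}
\sum_{j=1}^{n+m}\big(a_j u_j + b_j v_j\big)=0,\qquad \sum_{j=1}^{n+m}\big(c_j u_j + d_j v_j\big)=0,
\end{equation*}
with $(a_j,b_j,c_j,d_j)\in\mathbb{Z}$. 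Substituting $v_j=\tau_j u_j+\cdots$ and reading off the linear part, the condition that $\mathcal{N}_{\mathcal{X}}\cap H$ be an anomalous subset of $\mathcal{N}_{\mathcal{X}}$ (i.e.\ that $\dim(\mathcal{N}_{\mathcal{X}}\cap H)>n+m-2$, hence $=n+m-1$) forces the $2\times(n+m)$ matrix whose $j$-th column is $\binom{a_j+b_j\tau_j}{\,c_j+d_j\tau_j}$ to have rank $1$. I would then analyze, column-block by column-block, which integer data $(a_j,b_j,c_j,d_j)$ are compatible with this rank-one condition.

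The key point is the pairing structure of the cusp shapes: $\tau_{n+i}=\tau_i$ for $1\le i\le m$, while $\tau_1,\dots,\tau_n$ are non-symmetric (so each is non-quadratic, and no two are related by a $\mathrm{GL}_2(\mathbb{Z})$ Möbius transformation; in particular, as in Lemma \ref{clai}, for $i\ne j$ with $\tau_i\ne\tau_j$ the numbers $1,\tau_i,\tau_j,\tau_i\tau_j$ are $\mathbb{Q}$-linearly independent — this independence should be extracted from non-symmetry, and it is the one spot where the hypothesis is really used). The rank-one condition on the full $2\times(n+m)$ matrix is equivalent to: for every pair of column indices $j<j'$, the $2\times 2$ minor
\begin{equation*}
\left(\begin{array}{cc} a_j+b_j\tau_j & a_{j'}+b_{j'}\tau_{j'}\\ c_j+d_j\tau_j & c_{j'}+d_{j'}\tau_{j'}\end{array}\right)
\end{equation*}
has rank $\le 1$. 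When $\tau_j\ne\tau_{j'}$ (e.g.\ two indices with distinct cusp shapes), Lemma \ref{clai} — after rearranging so the two $\tau$'s sit in the ``$\tau_1$'' and ``$\tau_2$'' slots — forces the $j$-th and $j'$-th columns of the $2\times 4$ integer matrix to be ``decoupled'': either both entries in the $j$-block vanish or both in the $j'$-block vanish. When $\tau_j=\tau_{j'}$ (i.e.\ $\{j,j'\}=\{i,n+i\}$ a paired index), Lemma \ref{17052301} applies with $\tau=\tau_i$ non-quadratic, and yields that the relevant $2\times 4$ integer block is either of the proportional form $(a,b,ma,mb)$ or has its first pair of columns zero.

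Assembling these constraints: run over all pairs of indices. The decoupling from Lemma \ref{clai} shows that the support of each row of the $2\times(n+m)$ integer block either is entirely contained in a single paired index set $\{i,n+i\}$, or the corresponding variable must be ``killed'', i.e.\ forced into the locus $M_\ell=L_\ell=1$. I would organize this into the two stated alternatives: if for some index the defining equations do not isolate a single pair $\{i,n+i\}$, then — chasing the vanishing of integer entries across all pairs — every coordinate appearing is pinned to $M_\ell=L_\ell=1$, giving the first conclusion; otherwise the defining data for $H$ lives entirely inside one paired block $\{i,n+i\}$, and Lemma \ref{17052301}'s proportional form $(a,b,ma,mb)$ translates back (clearing denominators of $m$) into $M_i^a=M_{n+i}^{b}$, $L_i^a=L_{n+i}^{b}$ for integers $a,b$, which is the second conclusion. (One must also dispatch the case where $H$ is defined partly by an equation only in a paired block and partly by an equation forcing $M_\ell=L_\ell=1$; that simply lands inside the first alternative applied to a sub-collection of coordinates.) The main obstacle I anticipate is the bookkeeping in this last assembly step — keeping track, as the index pair $(j,j')$ ranges over all of $\{1,\dots,n+m\}$, of which blocks are ``zeroed'' versus ``proportionalized'', and verifying that the rank-$2$ hypothesis on the integer matrix together with the dimension count $\dim(\mathcal{N}_{\mathcal{X}}\cap H)=n+m-1$ leaves exactly the two listed possibilities and no hybrid; the algebraic-number-theory input (reducing ``$\tau_i,\tau_j$ non-symmetric $\Rightarrow 1,\tau_i,\tau_j,\tau_i\tau_j$ $\mathbb{Q}$-independent'') is a short but essential lemma that I would record separately before the main argument.
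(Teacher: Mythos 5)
Your strategy is the paper's: pass to the logarithmic parametrization, deduce from the anomalous condition that the $2\times(n+m)$ Jacobian with entries $a_{kj}+b_{kj}\tau_j$ ($k=1,2$, $1\le j\le n+m$) has rank one, and then apply Lemmas~\ref{17052301} and \ref{clai} to $2\times 4$ integer blocks. The gap is exactly in the assembly step you flag. Both lemmas carry the hypothesis that the $2\times 4$ \emph{integer} matrix has rank $2$; for an arbitrary pair of column indices $(j,j')$ nothing forces this, so ``running over all pairs'' does not by itself yield the claimed decoupling, and without controlling the rank of those integer blocks your case analysis cannot close.

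The paper fixes this with an anchoring step. One first asks whether there is an index $i$ with $a_{1i}b_{2i}-a_{2i}b_{1i}\neq 0$. If so, then for every other column $j$ the $2\times 4$ integer block on columns $i$ and $j$ automatically has rank~$2$, so Lemma~\ref{clai} zeroes the $j$-block when $\tau_i\neq\tau_j$, and Lemma~\ref{17052301} makes it proportional to the $i$-block when $\tau_j=\tau_i$ (i.e.\ $\{i,j\}=\{l,n+l\}$ for some $l$); the two alternatives of the lemma then follow at once according to whether $i$ lies in a paired block or not. If no such $i$ exists, a short separate argument shows that one of the two defining rows of $H$ must vanish, contradicting $\operatorname{codim}H=2$. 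Finally, your side remark that non-symmetry of $\tau_i,\tau_j$ yields $\mathbb{Q}$-linear independence of $1,\tau_i,\tau_j,\tau_i\tau_j$ is the correct bridge to the hypothesis of Lemma~\ref{clai} and is worth recording explicitly, as you suggest; the paper uses it silently.
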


\begin{proof}
Let $H$ be defined by
\begin{equation}\label{17052308}
\begin{split}
M_1^{a_{11}}L_1^{b_{11}}\cdots M_{n+m}^{a_{1(n+m)}}L_{n+m}^{b_{1(n+m)}}=1,\\
M_1^{a_{21}}L_1^{b_{21}}\cdots M_{n+m}^{a_{2(n+m)}}L_{n+m}^{b_{2(n+m)}}=1.
\end{split}
\end{equation}
If the holomorphic representations of $\log\mathcal{N}_{\mathcal{X}}$ are given as 
\begin{equation*}
v_i=\tau_iu_i+\text{higher degrees} \quad (1\leq i\leq n+m),
\end{equation*}
then $\mathcal{N}_\mathcal{X}\cap H$ is biholomorphic to the complex manifold defined by
\begin{equation}\label{17052101}
\begin{gathered}
a_{11}u_1+b_{11}(\tau_1u_1+\cdots)+\cdots+a_{1(n+m)}u_{n+m}+b_{1(n+m)}(\tau_{m}u_{n+m}+\cdots)=0,\\
a_{21}u_1+b_{21}(\tau_1u_1+\cdots)+\cdots+a_{2(n+m)}u_{n+m}+b_{2(n+m)}(\tau_{m}u_{n+m}+\cdots)=0.
\end{gathered}
\end{equation}
Since $\mathcal{N}_{\mathcal{X}}\cap H$ is an anomalous subset of $\mathcal{N}_{\mathcal{X}}$, \eqref{17052101} is a complex manifold of dimension $(n+m-1)$. Thus the rank of 
\begin{equation}\label{17052306}
\left(\begin{array}{ccc}
a_{11}+b_{11}\tau_1 & \cdots & a_{1(n+m)}+b_{1(n+m)}\tau_m\\
a_{21}+b_{21}\tau_1 & \cdots & a_{2(n+m)}+b_{2(n+m)}\tau_m\\
\end{array}\right),
\end{equation}
which is the Jacobian of \eqref{17052101} at $(0,\dots,0)$, is equal to $1$. 
\begin{enumerate}
\item Suppose 
\begin{equation}
\text{det}\left(\begin{array}{cc}
a_{1i} & b_{1i}\\
a_{2i} & b_{2i} \\
\end{array}\right)\neq 0
\end{equation}
for some $1\leq i\leq n+m$. We further spilt the problem into the following two subcases. 
\begin{enumerate}
\item First assume $1\leq i\leq m$ or $n+1\leq i\leq n+m$. Without loss of generality, let $i=1$. Since the rank of 
\begin{equation}
\left(\begin{array}{cc}
a_{11}+b_{11}\tau_1 & a_{1j}+b_{1j}\tau_j\\
a_{21}+b_{21}\tau_1 & a_{2j}+b_{2j}\tau_j\\
\end{array}\right)
\end{equation}
is $1$ for any $j$ ($1\leq j\leq n+m$), we get 
\begin{equation}
a_{1j}=b_{1j}=a_{2j}=b_{2j}=0\quad (j\neq 1, n+1)
\end{equation}
by Lemma \ref{clai} and 
\begin{equation}
\begin{gathered}
\left( \begin{array}{cc}
a_{1(n+1)} & b_{1(n+1)}\\
a_{2(n+1)} & b_{2(n+1)} 
\end{array}\right)
=\left( \begin{array}{cc}
la_{11} & lb_{11}  \\
la_{21} & lb_{21} 
\end{array}\right) 
\end{gathered}
\end{equation}
for some $l\in \mathbb{Q}$ by Lemma \ref{17052301}.

\item If $m+1\leq i\leq n$, since the rank of 
\begin{equation}
\left(\begin{array}{cc}
a_{1i}+b_{1i}\tau_i & a_{1j}+b_{1j}\tau_j\\
a_{2i}+b_{2i}\tau_i & a_{2j}+b_{2j}\tau_j\\
\end{array}\right)
\end{equation}
is $1$ for any $j$ ($1\leq j\leq n+m$), we get     
\begin{equation}
a_{1j}=b_{1j}=a_{2j}=b_{2j}=0
\end{equation}
for all $j(\neq i)$ by Lemma \ref{clai}. 
\end{enumerate}

\item Suppose   
\begin{equation}
\text{det}\left(\begin{array}{cc}
a_{1i} & b_{1i}\\
a_{2i} & b_{2i} \\
\end{array}\right)=0
\end{equation}
for every $i$ ($1\leq i\leq n+m$). Without loss of generality, suppose $a_{11}+\tau_1b_{11}\neq 0$ and $a_{21}=b_{21}=0$. If 
\begin{equation*}
a_{1i}=b_{1i}=a_{2i}=b_{2i}=0
\end{equation*} 
for all $2\leq i\leq n+m$, then it contradicts the fact that $H$ is an algebraic subgroup of codimension $2$. If there exists $i$ ($ 2\leq i\leq n+m$) such that 
\begin{equation}
\left(\begin{array}{c}
a_{1i}+\tau_ib_{1i}\\
a_{2i}+\tau_ib_{2i} \\
\end{array}\right)\neq 
\left(\begin{array}{c}
0\\
0\\
\end{array}\right),
\end{equation}
then $a_{2i}=b_{2i}=0$ by Lemmas \ref{17052301} and \ref{clai}, again contradicting the fact that $H$ is an algebraic subgroup of codimension $2$. \end{enumerate}
In conclusion, we get \eqref{17052308} is either one of the following forms:
\begin{equation}\label{19100801}
\begin{gathered}
M_i^{a_{1i}}L_i^{b_{1i}}=1\\
M_i^{a_{2i}}L_i^{b_{2i}}=1
\end{gathered}
\end{equation}
for some $1\leq i\leq n+m$ or 
\begin{equation}\label{19100802}
\begin{gathered}
M_i^{aa_{1i}}L_i^{ab_{1i}}M_{n+i}^{ba_{1i}}L_{n+i}^{bb_{1i}}=1,\\
M_i^{aa_{2i}}L_i^{ab_{2i}}M_{n+i}^{ba_{2i}}L_{n+i}^{bb_{2i}}=1
\end{gathered}
\end{equation}
for some $1\leq i\leq m$ and $a,b\in \mathbb{Z}$. Since $a_{1i}b_{2i}-a_{2i}b_{1i}\neq 0$, the components of \eqref{19100801} and \eqref{19100802} containing $(1, \dots, 1)$ are
\begin{equation}
\begin{gathered}
M_i=L_i=1 \quad (1\leq i\leq n+m)
\end{gathered}
\end{equation}
and 
\begin{equation}
\begin{gathered}
M_i^{a}M_{n+i}^{b}=L_i^{a}L_{n+i}^{b}=1\quad (1\leq i\leq m)
\end{gathered}
\end{equation}
respectively. This completes the proof. 
\end{proof}

{\bf Remark:} Note that, in the statement of Lemma \ref{19071803}, there are only finitely many choices for $a,b$ by Theorem \ref{struc}. (One can also prove this directly from   
\begin{equation*}
au_i=bu_{n+i} \Longleftrightarrow av_i=bv_{n+i}
\end{equation*}
using the representations of $v_i, v_{n+i}$.) We will use this fact in the proof of the main theorem below.

Now we prove the main statement. Note that Theorem \ref{19072401} (resp. Theorem \ref{19100207}) is obtained by letting $n=m$ (resp. $m=0$) in the following theorem.

\begin{theorem}\label{19101107}
Let $\mathcal{X}$ be the same as in Lemma \ref{19071803}. Let $P_i$ $(i\in \mathbb{N})$ be a $(p_{i1}/q_{i1},\dots, p_{i(n+m)}/q_{i(n+m)})$-Dehn filling point in $\mathcal{N}_{\mathcal{X}}$ satisfying\footnote{Note that \eqref{19102101} impllies  
\begin{equation*}
|p_{ij}|+|q_{ij}|\rightarrow \infty\quad \text{as}\quad i\rightarrow\infty
\end{equation*} 
for each $j$ ($1\leq j\leq n+m$).}
\begin{equation}\label{19102101}
P_i\rightarrow (1, \dots, 1)\quad \text{as}\quad i\rightarrow\infty
\end{equation}
and 
\begin{equation*}
\{t_{i1}, \dots, t_{i(n+m)}\}
\end{equation*} 
be the set of holonomies of $P_i$ ($i\in \mathbb{N}$). If 
\begin{equation}\label{19090201}
\prod_{j=1}^{n}(t_{ij})^{a_{ij}}=\prod_{j=n+1}^{n+m}(t_{ij})^{a_{ij}}\quad (a_{ij}\in \mathbb{Z}, i\in \mathbb{N}), 
\end{equation}
then, for $i$ sufficiently large, we have 
\begin{equation}\label{19090202}
\begin{gathered}
(t_{ij})^{a_{ij}}=(t_{i(n+j)})^{a_{i(n+j)}}\quad (1\leq j\leq m),\\
a_{ij}=0 \quad (m+1\leq j\leq n).
\end{gathered}
\end{equation}
Moreover, if $a_{ij}\neq 0$, then
\begin{equation*}
p_{ij}/q_{ij}=p_{i(n+j)}/q_{i(n+j)},
\end{equation*}
and there exists a finite subset $S_j\subset\mathbb{Q}$ depending only on $\mathcal{X}$ such that  
\begin{equation*}
a_{ij}/a_{i(n+j)}\in S_j.
\end{equation*} 
\end{theorem}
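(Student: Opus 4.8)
The strategy is to realize each relation \eqref{19090201} as an intersection of $P_i$ with an algebraic subgroup of codimension $\dim\mathcal{X}+1$, to force all but finitely many $P_i$ into a fixed finite family of anomalous subsets of $\mathcal{N}_{\mathcal{X}}$ via Theorem \ref{19090806}, and then to read off the arithmetic from the structure of those subsets, using Lemma \ref{19071803} together with an induction on $m$. First I would assume not all $a_{ij}$ vanish (otherwise there is nothing to prove) and rewrite \eqref{19090201} as a single monomial equation in the coordinates $M_\bullet,L_\bullet$: writing $t_{ij}=M_{ij}^{s_{ij}}L_{ij}^{r_{ij}}$ with $p_{ij}r_{ij}-q_{ij}s_{ij}=1$ as in \eqref{19101601}, it becomes $M_1^{c_1}L_1^{e_1}\cdots M_{n+m}^{c_{n+m}}L_{n+m}^{e_{n+m}}=1$, and since $(p_{ij},q_{ij})$ and $(s_{ij},r_{ij})$ span $\mathbb{Z}^2$, this relation is $\mathbb{Z}$-independent of the $n+m$ Dehn filling equations of $P_i$ whenever some $a_{ij}\neq 0$; hence $P_i\in\mathcal{N}_{\mathcal{X}}\cap H_i$ for an algebraic subgroup $H_i$ of codimension $(n+m)+1=\dim\mathcal{X}+1$. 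By Theorem \ref{19101204} the heights $h(P_i)$ are uniformly bounded, so Theorem \ref{19090806} yields a finite collection $K_1,\dots,K_N$ of algebraic subgroups, each with $\mathcal{N}_{\mathcal{X}}\cap K_l$ anomalous in $\mathcal{N}_{\mathcal{X}}$, such that all but finitely many $P_i$ lie in $\bigcup_l\mathcal{N}_{\mathcal{X}}\cap K_l$. Pigeonholing and using $P_i\to(1,\dots,1)$ together with the closedness of each $\mathcal{N}_{\mathcal{X}}\cap K_l$, I may fix one $K$ with $\mathcal{N}_{\mathcal{X}}\cap K$ anomalous, containing $(1,\dots,1)$, and capturing infinitely many $P_i$.

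Next I would bring $K$ down to codimension $2$. A codimension-$1$ subgroup cannot meet $\mathcal{N}_{\mathcal{X}}$ anomalously, so $\operatorname{codim}K\ge 2$, and since $\mathcal{N}_{\mathcal{X}}\cap K$ is anomalous through the origin the Jacobian at $(0,\dots,0)$ of the system cutting out $\log(\mathcal{N}_{\mathcal{X}}\cap K)$ drops rank; the defining lattice of $K$ therefore contains a nonzero vector whose linearization on $\mathcal{N}_{\mathcal{X}}$ vanishes, and extending it to a rank-$2$ sublattice produces an algebraic subgroup $H\supseteq K$ of codimension $2$ with $\mathcal{N}_{\mathcal{X}}\cap H$ still anomalous and through the origin. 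Lemma \ref{19071803} then forces $\mathcal{N}_{\mathcal{X}}\cap H$, hence every such $P_i$, to lie in $M_\iota=L_\iota=1$ for some $\iota$, or in $M_\iota^a=M_{n+\iota}^b$, $L_\iota^a=L_{n+\iota}^b$ for some $1\le\iota\le m$ with $(a,b)$ ranging over a finite set (Remark after Lemma \ref{19071803}). The first alternative is impossible, since $M_{i\iota}=L_{i\iota}=1$ and $\gcd(p_{i\iota},q_{i\iota})=1$ would force $t_{i\iota}=1$, against $|t_{i\iota}|>1$. Hence each of these $P_i$ satisfies $M_{i\iota}^a=M_{i(n+\iota)}^b$, $L_{i\iota}^a=L_{i(n+\iota)}^b$; substituting the Dehn filling parametrization and using $\gcd(p_{i\iota},q_{i\iota})=1$ gives $p_{i\iota}/q_{i\iota}=p_{i(n+\iota)}/q_{i(n+\iota)}$ and the exact identity $t_{i\iota}^a=t_{i(n+\iota)}^b$ with $(a,b)$ from a finite list.

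I would finish by induction on $m$. When $m=0$, Lemma \ref{19071803} only offers $M_\iota=L_\iota=1$, already excluded, so all $a_{ij}=0$. For the inductive step I eliminate $t_{i(n+\iota)}$ using $t_{i\iota}^a=t_{i(n+\iota)}^b$: replacing the relation-lattice vector of \eqref{19090201} by a suitable $\mathbb{Q}$-combination of it with the vectors attached to $M_\iota^a=M_{n+\iota}^b$ and $L_\iota^a=L_{n+\iota}^b$, I obtain a relation of the shape \eqref{19090201} among the holonomies of the Dehn filling point induced by $P_i$ on $\overline{\text{Pr}_{n+\iota}(\mathcal{X}\cap H)}\in\chi_{n+m-1}^{(k+1)}$, a generalized A-polynomial with one fewer repeated cusp and still the non-symmetric shapes $\tau_1,\dots,\tau_n$. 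Running the induction at the level of $\mathbb{Q}$-spans of relation lattices, the inductive hypothesis places the original relation vector in the $\mathbb{Q}$-span of the vectors $a^{(j)}e_j-b^{(j)}e_{n+j}$ over the active cusps $j$; unwinding this gives $a_{ij}=0$ for $m+1\le j\le n$, the proportionality $a_{ij}/a_{i(n+j)}=a^{(j)}/b^{(j)}$ (hence a finite set $S_j$), the equality $p_{ij}/q_{ij}=p_{i(n+j)}/q_{i(n+j)}$ whenever $a_{ij}\neq 0$, and finally \eqref{19090202}: since each elementary identity $t_{ij}^{a^{(j)}}=t_{i(n+j)}^{b^{(j)}}$ holds \emph{exactly}, raising it to the rational power $a_{ij}/a^{(j)}$ yields $t_{ij}^{a_{ij}}=t_{i(n+j)}^{a_{i(n+j)}}$ with no spurious root of unity.

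The step I expect to be the main obstacle is this last passage: translating the abstract anomalous structure produced by Theorem \ref{19090806} into exponent-controlled relations among the core holonomies. It is crucial both that the reduction to codimension $2$ be done so that the subgroup still passes through the identity, so that Lemma \ref{19071803} produces relations that are literally and exactly satisfied by $P_i$ (avoiding the root-of-unity ambiguity a naive taking of $b$-th roots would incur), and that the algebraic subgroups arising over the at most $m$ inductive levels remain finite in number and depend only on $\mathcal{X}$, which is exactly what forces $S_j$ to depend only on $\mathcal{X}$. The non-symmetry of $\tau_1,\dots,\tau_n$, which is responsible for the vanishing $a_{ij}=0$ ($m+1\le j\le n$) and for pinning down the ratios, enters only through Lemmas \ref{17052301} and \ref{clai} inside Lemma \ref{19071803}.
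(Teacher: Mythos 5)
Your overall scaffolding — encode \eqref{19090201} as membership in a codimension $(n+m+1)$ algebraic subgroup, invoke Theorem \ref{19090806} to trap the $P_i$ in finitely many anomalous subsets $\mathcal{N}_{\mathcal{X}}\cap K$ through the identity, and then read off the structure via Lemma \ref{19071803} and an induction — matches the paper's strategy. However, the step where you ``bring $K$ down to codimension~$2$'' contains a genuine gap, and it is precisely at this point that the paper's proof does something different.

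You assert that, because $\mathcal{N}_{\mathcal{X}}\cap K$ is anomalous and the Jacobian of $\log(\mathcal{N}_{\mathcal{X}}\cap K)$ drops rank at the origin, the lattice $\Lambda_K$ defining $K$ must contain a nonzero vector $(a_1,b_1,\dots,a_{n+m},b_{n+m})$ whose linearization on $\mathcal{N}_{\mathcal{X}}$ vanishes. That cannot happen: the linearization is $\sum_j (a_j+b_j\tau_j)u_j$, and vanishing of this form on the tangent space $T_0\log\mathcal{N}_{\mathcal{X}}$ (which is parameterized freely by $u_1,\dots,u_{n+m}$) forces $a_j+b_j\tau_j=0$ for every $j$; since each $\tau_j$ is irrational, this gives $a_j=b_j=0$, i.e.\ the zero vector. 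What the rank drop actually gives you is a linear dependency among the rows $(a_{i1}+b_{i1}\tau_1,\dots,a_{i(n+m)}+b_{i(n+m)}\tau_{n+m})$ with coefficients in $\mathbb{Q}(\tau_1,\dots,\tau_{n+m})$, \emph{not} in $\mathbb{Q}$, so it does not descend to a lattice relation in $\Lambda_K$. Consequently there is no reason for a rank-$2$ sublattice of $\Lambda_K$ to define an $H\supseteq K$ with $\mathcal{N}_{\mathcal{X}}\cap H$ anomalous: from $\dim(\mathcal{N}_{\mathcal{X}}\cap K)\ge (n+m)-\operatorname{codim}K+1$ one only gets $\dim(\mathcal{N}_{\mathcal{X}}\cap H)\ge (n+m)-\operatorname{codim}K+1\le n+m-2$ when $\operatorname{codim}K\ge 3$, which falls short of the $n+m-1$ needed for anomalousness of a codimension-$2$ subgroup. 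Your codimension-$2$ reduction therefore does not go through.

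The paper sidesteps this by \emph{never} reducing the codimension of $K$: it inducts on $n+m$ (not on $m$). When $\operatorname{codim}K=2$, Lemma \ref{19071803} applies directly, giving Case (a). When $\operatorname{codim}K\ge 3$, the paper puts the defining equations of $K$ in row-echelon form, intersects $\mathcal{X}$ with the top equation, projects out the pair $(M_1,L_1)$ to land in a generalized A-polynomial $\overline{\text{Pr}\,\mathcal{X}_1}\in\chi_{n+m-1}^{(k+1)}$, and observes that a lower row of the echelon form (one not involving $M_1,L_1$) furnishes a multiplicative relation among the holonomies of $\text{Pr}\,P_i$; the inductive hypothesis in dimension $n+m-1$ then returns one matched pair of coefficients, which is fed back into a second projection to finish. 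This projection mechanism is exactly what replaces your flawed codimension-$2$ reduction, and your induction on $m$ alone does not supply it, because the projected variety may have a different split into non-symmetric shapes and repeats. I would recommend restructuring the induction to run on $n+m$, and in the high-codimension case to argue by Gauss elimination and projection rather than by shrinking $K$.
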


\begin{proof}[Proof of Theorem \ref{19101107}]
Recall $P_i$ is an intersection point between $\mathcal{X}$ and an algebraic subgroup defined by 
\begin{equation}\label{19101602}
\begin{gathered}
M_1^{p_{i1}}L_1^{q_{i1}}=1, \dots ,M_{n+m}^{p_{i(n+m)}}L_{n+m}^{q_{i(n+m)}}=1.
\end{gathered}
\end{equation}
By \eqref{19101601}, \eqref{19090201} corresponds to 
\begin{equation}\label{19071802}
\begin{gathered}
\prod_{j=1}^n(M_{j}^{r_{ij}}L_{j}^{s_{ij}})^{a_{ij}}=\prod_{j=n+1}^{n+m}(M_{j}^{r_{ij}}L_{j}^{s_{ij}})^{a_{ij}}
\end{gathered}
\end{equation}
where $p_{ij}s_{ij}-q_{ij}r_{ij}=1$ ($i\in \mathbb{N}$, $1\leq j\leq n+m$). So $P_i$ is an intersection point between $\mathcal{N}_{\mathcal{X}}$ and an algebraic subgroup of codimension $n+m+1$. By Theorem \ref{19090806}, there exists a finite number of algebraic subgroups $K_1, \dots, K_l$ such that 
\begin{equation*}
\mathcal{N}_{\mathcal{X}}\cap K_j\quad (1\leq j\leq l)
\end{equation*}
is an anomalous subset of $\mathcal{N}_{\mathcal{X}}$ and
\begin{equation*}
P_i\in \bigcup_{j=1}^l\mathcal{N}_{\mathcal{X}}\cap K_j
\end{equation*}
for all $i$ sufficiently large. Without loss of generality, suppose $l=1, K:=K_1$ and   
\begin{equation*}
\bigcup_{i\in \mathbb{N}}P_i\subset \mathcal{N}_{\mathcal{X}}\cap K.
\end{equation*}
We prove the theorem by induction on $n+m$.
\begin{enumerate}
\item If $n+m=2$, we have either $n=2, m=0$ or $n=m=1$. 
\begin{enumerate}
\item For $n=2, m=0$, by Lemma \ref{19071803}, $\mathcal{N}_{\mathcal{X}}\cap K$ is either contained in 
\begin{equation*}
M_1=L_1=1
\end{equation*}
or
\begin{equation*}
M_2=L_2=1.
\end{equation*}
But either contradicts the fact that Dehn filling points are non-elliptic. 

\item If $n=m=1$, again by Lemma \ref{19071803}, $\mathcal{N}_{\mathcal{X}}\cap K$ is contained in 
\begin{equation}
M_1^a=M_{2}^{b}, \quad L_1^a=L_{2}^b
\end{equation}
where $a, b\in \mathbb{Z}$. Thus  
\begin{equation*}
\big(t_{i1}^{-q_{i1}}\big)^{a}=\big(t_{i2}^{-q_{i2}}\big)^{b}, \quad \big(t_{i1}^{p_{i1}}\big)^{a}=\big(t_{i2}^{p_{i2}}\big)^{b},
\end{equation*}
implying
\begin{equation*}
\frac{q_{i2}b}{q_{i1}a}=\frac{p_{i2}b}{p_{i1}a}\Longrightarrow \frac{q_{i1}}{p_{i1}}=\frac{q_{i2}}{p_{i2}}
\end{equation*}
and 
\begin{equation}\label{19101101}
(t_{i1})^a=(t_{i2})^b
\end{equation}
for $i$ sufficiently large. Finally, if $a_{i1}, a_{i2}\neq 0$, we have 
\begin{equation*}
\frac{a_{i1}}{a_{i2}}=\frac{a}{b}
\end{equation*}  
by \eqref{19101101} (and \eqref{19090201}). Since there are only finitely many possibilities for $a,b$, we get the desired result. 
\end{enumerate}
\item Now suppose $n+m\geq 3$ and the theorem holds for $2, \dots, n+m-1$. 
\begin{enumerate}
\item If $K$ is of codimension $2$, then, by Lemma \ref{19071803}, we get $\mathcal{N}_{\mathcal{X}}\cap K$ is contained in 
\begin{equation}\label{19071807}
M_j^a=M_{n+j}^b, \quad L_j^a=L_{n+j}^b 
\end{equation}
for some $1\leq j\leq m$ and $a, b\in \mathbb{Z}$. Without loss of generality, suppose $j=1$ and $K$ is defined by
\begin{equation*}
M_1^a=M_{n+1}^b, \quad L_1^a=L_{n+1}^b.
\end{equation*}
As shown previously, this implies 
\begin{equation*}
\frac{q_{i1}}{p_{i1}}=\frac{q_{i(n+1)}}{p_{i(n+1)}}
\end{equation*}
and 
\begin{equation}\label{19100601}
(t_{i1})^a=(t_{i(n+1)})^b
\end{equation}
for $i$ sufficiently large. Let 
\begin{equation*}
\mathcal{X}_1:=\mathcal{X}\cap (M_1^a=M_{n+1}^b)
\end{equation*}
and 
\begin{equation*}
\begin{gathered}
\text{Pr}:\;(M_1, L_1, \dots, M_{n+m}, L_{n+m}) \longrightarrow (M_2, L_2, \dots, M_{n+m}, L_{n+m}).
\end{gathered}
\end{equation*}
Then $\overline{\text{Pr}\;\mathcal{X}_1}$ is a generalized A-polynomial in $ \chi_{n+m-1}^{(k+1)}$ with the cusp shapes
\begin{equation*}
\tau_2, \dots, \tau_n, \tau_{n+1}(=\tau_1), \dots, \tau_{n+m}(=\tau_{m}),
\end{equation*}
and $\text{Pr}\;P_i$ is a $(p_{i2}/q_{i2},\dots, p_{i(n+m)}/q_{i(n+m)})$-Dehn filling point of $\overline{\text{Pr}\;\mathcal{X}_1}$ with the set of the holonomies  
\begin{equation*}
\{t_{i2}, \dots, t_{i(n+m)}\}.
\end{equation*} 
By \eqref{19100601}, \eqref{19090201} is reduced to 
\begin{equation}
\begin{gathered}
\prod_{j=2}^{m}(t_{ij})^{a_{ij}}=(t_{i(n+1)})^{a_{i(n+1)}-\frac{b}{a}a_{i1}}\prod_{j=n+2}^{n+m}(t_{ij})^{a_{ij}} \quad (i\in \mathbb{N}).
\end{gathered}
\end{equation}
Since $\overline{\text{Pr}\;\mathcal{X}_1}\in\chi_{n+m-1}^{(k+1)}$, by induction, we get
\begin{equation}
\begin{gathered}
a_{i(n+1)}-\frac{b}{a}a_{i1}=0,\\
(t_{ij})^{a_{ij}}=(t_{i(n+j)})^{a_{i(n+j)}}\quad (2\leq j\leq m),\\
a_{ij}=0\quad (m+1\leq j\leq n)
\end{gathered}
\end{equation}
for $i$ sufficiently large. If $a_{i(n+1)}=a_{i1}=0$, then we are done. Otherwise, 
\begin{equation*}
a_{i(n+1)}-\frac{b}{a}a_{i1}=0 \Longrightarrow \frac{b}{a}=\frac{a_{i(n+1)}}{a_{i1}}.
\end{equation*}
Since there are only finitely many possibilities for $a$ and $b$, we get the desired result. 

\item Now suppose codimen $K\geq 3$. Applying Gauss elimination if necessary, we assume $K$ is defined by the following types of equations:
\begin{equation}\label{19062301}
\begin{gathered}
M_1^{a_{11}}L_1^{b_{11}}\cdots M_{n+m}^{a_{1(n+m)}}L_{n+m}^{b_{1(n+m)}}=1,\\
L_1^{b_{21}}\cdots M_{n+m}^{a_{2(n+m)}}L_{n+m}^{b_{2(n+m)}}=1,\\
M_2^{a_{32}}L_2^{b_{32}}\cdots M_{n+m}^{a_{3(n+m)}}L_{n+m}^{b_{3(n+m)}}=1,\\
\cdots.
\end{gathered}
\end{equation}
Without loss of generality, we further suppose $(a_{11}, b_{11})\neq (0,0)$.\footnote{If $a_{11}=b_{11}=0$, then we apply Gauss elimination again.} Let 
\begin{equation*}
\begin{gathered}
\mathcal{X}_1:=\mathcal{X}\cap (M_1^{a_{11}}L_1^{b_{11}}\cdots M_{n+m}^{a_{1(n+m)}}L_{n+m}^{b_{1(n+m)}}=1)
\end{gathered}
\end{equation*}
and 
\begin{equation*}
\begin{gathered}
\text{Pr}:\;(M_1, L_1, \dots, M_{n+m}, L_{n+m}) \longrightarrow (M_2, L_2, \dots, M_{n+m}, L_{n+m}).
\end{gathered}
\end{equation*} 
Then $\overline{\text{Pr}\;\mathcal{X}_1}$ is a generalized A-polynomial in $ \chi_{n+m-1}^{(k+1)}$ with cusp shapes
\begin{equation*}
\tau_2, \dots, \tau_n, \tau_{n+1}(=\tau_1), \dots, \tau_{n+m}(=\tau_{m}),
\end{equation*}
and $\text{Pr}\;P_i$ ($i\in \mathbb{N}$) is a $(p_{i2}/q_{i2},\dots, p_{i(n+m)}/q_{i(n+m)})$-Dehn filling point of $\overline{\text{Pr}\;\mathcal{X}_1}$ with the set of the holonomies   
\begin{equation*}
\{t_{i2}, \dots, t_{i(n+m)}\}.
\end{equation*} 
The third equation in \eqref{19062301} implies the holonomies of $\text{Pr}\;P_i$ are multiplicatively dependent, so, by induction, there exists some $j$ ($2\leq j\leq m$) and a finite set $S_j$ of rational numbers such that  
\begin{equation}\label{19090603}
\frac{p_{ij}}{q_{ij}}=\frac{p_{i(n+j)}}{q_{i(n+j)}}
\end{equation}
and 
\begin{equation}\label{19100604}
t_{ij}=(t_{i(n+j)})^r
\end{equation}
for all sufficiently large $i$ with some $r(=b/a)\in S_j$. Without loss of generality, suppose $j=2$. Then \eqref{19090603} and \eqref{19100604} imply
\begin{equation*}
(t_{i2})^{-q_{i2}}=\big((t_{i(n+2)})^{-q_{i(n+2)}}\big)^r, \quad (t_{i2})^{p_{i2}}=\big((t_{i(n+2)})^{p_{i(n+2)}}\big)^r
\end{equation*}
and so 
\begin{equation*}
P_i\in (M_2^a=M_{n+2}^b,L_2^a=L_{n+2}^{b})
\end{equation*}
for $i$ sufficiently large. Let
\begin{equation*}
\mathcal{X}_2:=\mathcal{X}\cap (M_2^a=M_{n+2}^b)
\end{equation*}
and
\begin{equation*}
\begin{gathered}
\text{Pr}:\;(M_1, L_1, \dots, M_{n+m}, L_{n+m}) \longrightarrow (M_1, L_1, M_3, L_3, \dots, M_{n+m}, L_{n+m}).
\end{gathered}
\end{equation*} 
Then $\overline{\text{Pr}\;\mathcal{X}_2}$ is a generalized A-polynomial in $\chi_{n+m-1}^{(k+1)}$ and $\text{Pr}\;P_i$ is a $(p_{i1}/q_{i1},p_{i3}/q_{i3},\dots, p_{i(n+m)}/q_{i(n+m)})$-Dehn filling point of $\overline{\text{Pr}\;\mathcal{X}_2}$ with   
\begin{equation*}
\{t_{i1},t_{i3}, \dots, t_{i(n+m)}\}
\end{equation*} 
the set of the holonomies of $\text{Pr}\;P_i$. By \eqref{19100604}, \eqref{19090201} is reduced to 
\begin{equation}
\begin{gathered}
(t_{i1})^{a_{i1}}\prod_{j=3}^{m}(t_{ij})^{a_{ij}}=(t_{i(n+1)})^{a_{i1}}(t_{i(n+2)})^{a_{i(n+2)}-ra_{i2}}\prod_{j=n+3}^{n+m}(t_{ij})^{a_{ij}}\quad (i\in \mathbb{N}). 
\end{gathered}
\end{equation}
By induction, 
\begin{equation}
\begin{gathered}
a_{i(n+2)}-ra_{i2}=0,\\
(t_{ij})^{a_{ij}}=(t_{i(n+j)})^{a_{i(n+j)}}\quad (j=1\;\text{or}\; 3\leq j\leq m),\\
a_{ij}=0\quad (m+1\leq j\leq n)
\end{gathered}
\end{equation}
for $i$ sufficiently large. If $a_{i(n+2)}=a_{i2}=0$, then we are done. Otherwise, 
\begin{equation*}
a_{i(n+2)}-ra_{i2}=0 \Longrightarrow r=\frac{a_{i(n+2)}}{a_{i2}}.
\end{equation*}
Since there are only finitely many possibilities for $r$, we get the desired result. 
\end{enumerate}
\end{enumerate}
\end{proof}

\section{Main Result II}\label{19101404}
This section is devoted to the proof of Theorem \ref{19090606}. The basic strategy of the proof is similar to that of the proof of Theorem \ref{19101107} in the previous section. 

Let $\mathcal{M}$ be a $1$-cusped hyperbolic $3$-manifold having non-quadratic cusp shape and $\mathcal{X}$ be its A-polynomial. For
\begin{equation*}
\mathcal{X}^{(n)}:=\mathcal{X}\times \cdots \times \mathcal{X}\subset \mathbb{C}^{2n}(:=(M_1, L_1, \dots, M_n, L_n)),
\end{equation*} 
which is the product of $n$ identical copies of $\mathcal{X}$, we consider it as the A-polynomial of an $n$-cusped hyperbolic $3$-manifold.

The following lemma is a refined version of Lemma \ref{19071803}. 
\begin{lemma}\label{19100901}
Let $\mathcal{X}^{(n)}$ be as above. Let $H$ is an algebraic subgroup of codimension $2$. If $\mathcal{N}_{\mathcal{X}^{(n)}}\cap H$ is an $(n-1)$-dimensional anomalous subset of $\mathcal{N}_{\mathcal{X}^{(n)}}$ containing $(1, \dots, 1)$, then $\mathcal{N}_{\mathcal{X}^{(n)}}\cap H$ is either contained in 
\begin{equation*}
M_i=L_i=1\quad (1\leq i\leq n)
\end{equation*}
or 
\begin{equation*}
M_i=M_j, \quad L_i=L_j\quad (1\leq i\neq j\leq n)
\end{equation*} 
or 
\begin{equation*}
M_i=M_j^{-1}, \quad L_i=L_j^{-1}\quad (1\leq i\neq j\leq n).
\end{equation*} 
\end{lemma}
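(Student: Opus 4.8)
The plan is to follow the proof of Lemma~\ref{19071803} closely, and to extract the sharper conclusion from the one extra feature available here: $\mathcal{X}^{(n)}$ is a product of copies of a single curve, so its Neumann--Zagier parametrization decouples. Concretely, write $H$ as the common zero locus of two multiplicative relations with rank-$2$ integer exponent matrix, with rows $(a_{11},b_{11},\dots,a_{1n},b_{1n})$ and $(a_{21},b_{21},\dots,a_{2n},b_{2n})$; on $\log\mathcal{N}_{\mathcal{X}^{(n)}}$ one has $v_i = f(u_i)$ for one and the same odd holomorphic function $f(u) = \tau u + (\text{odd terms of degree}\ge 3)$ of $u_i$ alone, namely the function describing $\mathcal{N}_{\mathcal{X}}$ (cf.~\eqref{19100301}). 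Since $\mathcal{N}_{\mathcal{X}^{(n)}}\cap H$ has dimension $n-1$, the matrix with columns $\binom{a_{1j}+b_{1j}\tau}{a_{2j}+b_{2j}\tau}$ ($1\le j\le n$) --- the Jacobian at the origin --- has rank $1$. Exactly as in the proof of Lemma~\ref{19071803}, the sub-case in which every block $\left(\begin{smallmatrix}a_{1i}&b_{1i}\\a_{2i}&b_{2i}\end{smallmatrix}\right)$ is singular cannot occur (the rank-one Jacobian would then force the two integer rows to be $\mathbb{Q}$-proportional, against rank $2$), so some block --- say the first --- is nonsingular.

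Applying Lemma~\ref{17052301} to columns $1$ and $j$ for each $j$ (there is now only the one, non-quadratic, cusp shape $\tau$, so Lemma~\ref{clai} is not needed; and the alternative \eqref{17052209} is incompatible with the first block being nonsingular), I would get $(a_{1j},b_{1j}) = m_j(a_{11},b_{11})$ and $(a_{2j},b_{2j}) = m_j(a_{21},b_{21})$ with $m_j\in\mathbb{Q}$ and $m_1 = 1$. Using the inverse of the first block to take the right rational combinations of the two defining relations, the branch of $\mathcal{N}_{\mathcal{X}^{(n)}}\cap H$ through $(1,\dots,1)$ becomes, in log coordinates, the locus $\sum_{j} m_j u_j = 0$ and $\sum_{j} m_j f(u_j) = 0$.

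The heart of the proof is then to pin down the $m_j$. Since $\{\sum_j m_j u_j = 0\}$ already has dimension $n-1$, the second equation must hold identically on it, i.e.\ $f\bigl(-\sum_{j\ge2}m_j u_j\bigr) = -\sum_{j\ge2}m_j f(u_j)$ as power series in $u_2,\dots,u_n$. Comparing homogeneous parts in the least degree $d\ge 3$ occurring with nonzero coefficient in $f$ --- such a $d$ exists, for otherwise $v = \tau u$ on $\mathcal{N}_{\mathcal{X}}$, whence $\mathcal{X}$ would be Zariski dense in $(\mathbb{C}^*)^2$ (as $1,\tau$ are linearly independent over $\mathbb{Q}$), absurd --- a monomial $u_j^{d-1}u_k$ with $j\ne k$ survives on the left-hand side unless at most one $m_j$ with $j\ge2$ is nonzero, and the remaining $m_j$ is then forced to satisfy $(-m_j)^d = -m_j$, hence $m_j = \pm1$ since $d$ is odd. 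Translating back: if all $m_j$ with $j\ge2$ vanish, the two relations involve only $M_1,L_1$ and their zero locus near $(1,\dots,1)$ on $\mathcal{X}^{(n)}$ is contained in $\{M_1=L_1=1\}$; if the surviving exponent is $m_j=-1$ it is contained in $\{M_1=M_j,\ L_1=L_j\}$; and if $m_j=1$ it is contained in $\{M_1=M_j^{-1},\ L_1=L_j^{-1}\}$ (here $L_1 = L_j^{\pm1}$ follows from $M_1 = M_j^{\pm1}$ because $f$ is odd). After relabeling this is the claimed trichotomy. I expect this last step to be the main obstacle: in Lemma~\ref{19071803} the analogous argument only yields $M_i^a=M_{n+i}^b$ with $a,b$ ranging over a finite set, and sharpening it to $\pm1$ genuinely uses the decoupling $v_i=f(u_i)$ together with the oddness of $f$ and the non-vanishing of a higher Taylor coefficient; the Taylor-coefficient bookkeeping and the verification that $f$ is not linear are the points requiring care, while the remainder is a routine adaptation of the proof of Lemma~\ref{19071803}.
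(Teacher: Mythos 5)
Your proof is correct and follows essentially the same route as the paper: the rank-one Jacobian forces a nonsingular $2\times 2$ block, Lemma \ref{17052301} makes all exponent pairs $\mathbb{Q}$-proportional to the first, and comparing the lowest odd degree $\ge 3$ in the decoupled power-series identity $h\bigl(-\sum_{j\ge 2} l_j u_j\bigr)+\sum_{j\ge 2} l_j h(u_j)=0$ forces at most one $l_j\ne 0$ with $l_j=\pm 1$. The only minor divergence is that the paper invokes Lemma \ref{NZ} for the non-linearity of the Neumann--Zagier expansion, whereas you supply a direct (and valid) Zariski-density argument for the same fact.
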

Before proving the lemma, we cite the following, which is Lemma 2.3 in \cite{jeon3}.

\begin{lemma}\label{NZ}
Let $\mathcal{M}$ be a $1$-cusped hyperbolic $3$-manifold and $\mathcal{X}$ be its A-polynomial. If  
\begin{equation}\label{18110805}
v=c_1 u+c_{3}u^{3}+\cdots.
\end{equation}
is the holomorphic representation of $\log \mathcal{N}_{\mathcal{X}}$, then there exists $i$ ($i\geq 2$) such that $c_{2i-1}\neq 0$. In other words, \eqref{18110805} is not linear. 
\end{lemma}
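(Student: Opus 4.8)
The plan is to argue by contradiction: suppose the holomorphic representation is linear, i.e.\ $v(u)=c_1u$ identically near $u=0$ with all higher coefficients vanishing, and derive a contradiction with the algebraicity of $\mathcal{X}$.

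\emph{Step 1 (identify the slope).} The coefficient $c_1$ is exactly the cusp shape $\tau_1$ of $\mathcal{M}$, which by Theorem \ref{potential}(1) satisfies $\tau_1\in\mathbb{C}\setminus\mathbb{R}$; in particular $c_1$ is irrational. \emph{Step 2 (translate the assumption back to $\mathcal{X}$).} Since $u=\log M$ and $v=\log L$ on $\mathcal{N}_{\mathcal{X}}$, the identity $v\equiv c_1u$ says that near $(1,1)$ the curve $\mathcal{X}$ is the graph $L=M^{c_1}$, where $M^{c_1}:=e^{c_1\log M}$ is the branch with value $1$ at $M=1$. Thus the Dehn filling subset $\mathcal{N}_{\mathcal{X}}$ equals $\{(M,M^{c_1}):|M-1|<\varepsilon\}$ for some $\varepsilon>0$.

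\emph{Step 3 (contradict algebraicity).} Write $A(M,L)=\sum_{(i,j)}a_{ij}M^iL^j$ for a nonzero polynomial vanishing on $\mathcal{X}$. Substituting the parametrization of Step 2 gives $\sum_{(i,j)}a_{ij}M^{\,i+c_1j}=0$ for all $M$ in a disk about $1$. Because $c_1$ is irrational, $i+c_1j=i'+c_1j'$ forces $(i,j)=(i',j')$, so the exponents attached to distinct monomials of $A$ are pairwise distinct; and the functions $M\mapsto M^{\lambda}$ for distinct $\lambda\in\mathbb{C}$ are linearly independent over $\mathbb{C}$ on any disk (put $M=e^z$ and use linear independence of the exponentials $e^{\lambda z}$). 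Hence every $a_{ij}=0$, contradicting $A\neq0$. Therefore $v$ cannot be linear, so $c_{2i-1}\neq0$ for some $i\geq2$, as claimed.

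The step requiring genuine care is Step 2: one must know that the local branch of $\mathcal{X}$ through $(1,1)$ is smooth and uniformized by $u=\log M$ — this is precisely the Neumann--Zagier normal form recorded in Theorem \ref{potential} — and that one may legitimately substitute a single-valued holomorphic parametrization into the defining polynomial $A$. Everything else (Step 1, and the linear-independence computation in Step 3) is routine. An alternative to Step 3 that avoids mentioning $A$: a positive-dimensional analytic germ contained in an algebraic curve must be an open piece of an irreducible component of that curve, but for $c_1$ irrational the germ $\{(M,M^{c_1})\}$ has infinite monodromy and is not contained in any algebraic curve, which yields the same contradiction. Either way, the non-quadraticity hypothesis on the cusp shape is not even needed here — only $\tau_1\notin\mathbb{R}$ is used.
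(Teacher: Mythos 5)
Your proof is correct, but note that the paper itself contains no proof of Lemma \ref{NZ} to compare against: it is quoted as Lemma 2.3 of \cite{jeon3}, so the argument lives in that reference. Your write-up therefore serves as a self-contained substitute, and it holds up. From $v\equiv c_1u$ with $c_1=\tau\in\mathbb{C}\setminus\mathbb{R}$ (Theorem \ref{potential}(1) and \eqref{19100301}) you get that the germ $\{(e^{u},e^{\tau u}):|u|<\varepsilon\}$ lies in $\mathcal{N}_{\mathcal{X}}\subset\mathcal{X}$; substituting into a nonzero polynomial $A(M,L)=\sum_{(i,j)}a_{ij}M^iL^j$ vanishing on the curve $\mathcal{X}$ and setting $M=e^{z}$ gives $\sum_{(i,j)}a_{ij}e^{(i+\tau j)z}\equiv 0$ near $z=0$; since $\tau\notin\mathbb{Q}$ the exponents $i+\tau j$ are pairwise distinct, and linear independence of exponentials with distinct exponents forces $A=0$, a contradiction. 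Two points you flag are indeed the only ones needing care, and both are fine: you only need that $\mathcal{N}_{\mathcal{X}}$ is \emph{contained} in the graph $L=M^{c_1}$ (not that $\mathcal{X}$ coincides with it near $(1,1)$), which is exactly what the definition of $\log\mathcal{N}_{\mathcal{X}}$ via \eqref{19101103} gives, and for $n=1$ the variety $\mathcal{X}$ is a curve in $\mathbb{C}^2$, so a nonzero annihilating polynomial exists. You are also right that only $\tau\notin\mathbb{R}$ (hence $\tau\notin\mathbb{Q}$) is used, consistent with the lemma being stated for an arbitrary $1$-cusped hyperbolic manifold. Conceptually, your Step 3 is the elementary case of the fact that an algebraic curve in $(\mathbb{C}^*)^2$ cannot contain a germ of a one-parameter subgroup of irrational slope; the monodromy variant you sketch at the end is the same fact in different clothing, and either version closes the argument.
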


\begin{proof}[Proof of Lemma \ref{19100901}]
Let $H$ be defined by 
\begin{equation*}
\begin{gathered}
M_1^{a_{11}}L_1^{b_{11}}\cdots M_n^{a_{1n}}L_n^{b_{1n}}=1,\\
M_1^{a_{21}}L_1^{b_{21}}\cdots M_n^{a_{2n}}L_n^{b_{2n}}=1.
\end{gathered}
\end{equation*} 
Since $\mathcal{X}^{(n)}$ is the product of $n$ copies of $\mathcal{X}$, the holomorphic representations of $\log\mathcal{N}_{\mathcal{X}^{(n)}}$ are 
\begin{equation*}
v_i=\tau u_i+h(u_i) \quad (1\leq i\leq n)
\end{equation*}
where $h$ is a non-trivial (by Lemma \ref{NZ}) holomorphic function of degree $\geq 3$. Now $\mathcal{N}_{\mathcal{X}^{(n)}}\cap H$ is biholomorphic to the complex manifold defined by
\begin{equation}\label{19100903}
\begin{gathered}
a_{11}u_1+b_{11}(\tau u_1+h(u_1))+\cdots+a_{1n}u_{n}+b_{1n}(\tau u_{n}+h(u_n))=0,\\
a_{21}u_1+b_{21}(\tau u_1+h(u_1))+\cdots+a_{2n}u_{n}+b_{2n}(\tau u_{n}+h(u_n))=0.
\end{gathered}
\end{equation}
Since $\mathcal{N}_{\mathcal{X}^{(n)}}\cap H$ is an $(n-1)$-dimensional anomalous subset of $\mathcal{N}_{\mathcal{X}^{(n)}}$, \eqref{19100903} is an $(n-1)$-dimensional complex manifold. Thus the rank of 
\begin{equation}
\left(\begin{array}{ccc}
a_{11}+b_{11}\tau & \cdots & a_{1n}+b_{1n}\tau\\
a_{21}+b_{21}\tau & \cdots & a_{2n}+b_{2n}\tau\\
\end{array}\right),
\end{equation}
which is the Jacobian of \eqref{19100903} at $(0,\dots,0)$, is equal to $1$. Without loss of generality, suppose\footnote{If \begin{equation}
\text{det}\left(\begin{array}{cc}
a_{1i} & b_{1i}\\
a_{2i} & b_{2i} \\
\end{array}\right)=0
\end{equation}
for all $1\leq i\leq n$, it contradicts the fact that $H$ is an algebraic subgroup of codimension $2$ (as seen in the proof of Lemma \ref{19071803}).}  
\begin{equation}
\text{det}\left(\begin{array}{cc}
a_{11} & b_{11}\\
a_{21} & b_{21} \\
\end{array}\right)\neq 0.
\end{equation}
Since the rank of 
\begin{equation}
\text{det}\left(\begin{array}{cc}
a_{11}+\tau b_{11} & a_{1i}+\tau b_{1i}\\
a_{21}+\tau b_{21} & a_{2i}+\tau b_{2i}\\
\end{array}\right)
\end{equation}
is $1$ for any $i$ ($2\leq i\leq n$), by Lemma \ref{17052301}, we get
\begin{equation}
\left(\begin{array}{cc}
a_{1i} & b_{1i}\\
a_{2i} & b_{2i}\\
\end{array}\right)
=\left(\begin{array}{cc}
l_ia_{11} & l_ib_{11} \\
l_ia_{21} & l_ib_{21} \\
\end{array}\right)\quad (l_i\in \mathbb{Q})
\end{equation}
for each $i$ ($2\leq i\leq n$). Now \eqref{19100903} is  
\begin{equation}
\begin{gathered}
a_{11}u_1+b_{11}(\tau u_1+h(u_1))+\sum_{i=2}^nl_i\big( a_{11}u_{i}+b_{11}(\tau u_{i}+h(u_i))\big)=0,\\
a_{21}u_1+b_{21}(\tau u_1+h(u_1))+\sum_{i=2}^nl_i\big(a_{21}u_{i}+b_{21}(\tau u_{i}+h(u_i))\big)=0
\end{gathered}
\end{equation}
and, since $a_{11}b_{21}-a_{21}b_{11}\neq 0$, it is equivalent to 
\begin{equation}\label{19100904}
\begin{gathered}
u_1+\sum_{i=2}^n l_iu_i=0,\\
\tau u_1+h(u_1)+\sum_{i=2}^n l_i(\tau u_i+h(u_i))=0.
\end{gathered}
\end{equation}
Since \eqref{19100904} defines an $(n-1)$-dimensional complex manifold, two equations in \eqref{19100904} are equivalent to each other. That is, if 
\begin{equation*}
u_1=-\sum_{i=2}^n l_iu_i,
\end{equation*}
then 
\begin{equation}\label{19100905}
\begin{gathered}
\tau \big(-\sum_{i=2}^n l_iu_i\big)+h\big(-\sum_{i=2}^n l_iu_i\big)+\sum_{i=2}^n l_i(\tau u_i+h(u_i))\\
=h\big(-\sum_{i=2}^n l_iu_i\big)+\sum_{i=2}^n l_ih(u_i)
\end{gathered}
\end{equation}
is the zero polynomial. Let 
\begin{equation*}
h(u_j)=\sum_{j=k}^{\infty} c_j u_j^k 
\end{equation*}
where $c_k\neq 0$ ($k\geq 3$). Then the part of homogeneous degree $k$ in \eqref{19100905} is 
\begin{equation}\label{19100906}
c_k\big(-\sum_{i=2}^n l_iu_i\big)^k+c_k\sum_{i=2}^n l_iu_i^k.
\end{equation}
It is easy to check \eqref{19100906} is the zero polynomial iff  
\begin{equation*}
l_i\neq 0
\end{equation*}
for at most one $i$ ($2\leq i\leq n$).
\begin{enumerate} 
\item If $l_i=0$ for all $i$ ($2\leq i\leq n$), then $\mathcal{N}_{\mathcal{X}^{(n)}}\cap H$ is contained in 
\begin{equation*}
M_1=L_1=1.
\end{equation*}
\item Suppose $l_i\neq 0$ for only one $i$ and, without loss of generality, we assume $l_2\neq 0$ and $l_i=0$ for all $3\leq i\leq n$. Then \eqref{19100906} is   
\begin{equation}\label{19100907}
c_k(-l_2u_2\big)^k+c_k l_2u_2^k.
\end{equation}
Since $k$ is odd, \eqref{19100907} is the zero polynomial iff $l_2=\pm 1$. In other words, $\mathcal{N}_{\mathcal{X}^{(n)}}\cap H$ is contained in either
\begin{equation*}
M_1=M_2, \quad L_1=L_2
\end{equation*}
or 
\begin{equation*}
M_1=M_2^{-1}, \quad L_1=L_2^{-1}.
\end{equation*}
This completes the proof. 
\end{enumerate}
\end{proof}

The following lemma is an analogue of Theorem \ref{19101107} and the strategy of the proof is the similar to that of Theorem \ref{19101107} using induction.

\begin{lemma}\label{19091508}
Let $\mathcal{X}^{(n)}$ be the same as above. For $|p_i|+|q_i|$ ($1\leq i\leq n$) sufficiently large, if the holonomies of a $(p_1/q_1, \dots, p_n/q_n)$-Dehn filling point of $\mathcal{X}^{(n)}$ are rationally dependent, then there exist $k,l$ ($1\leq k\neq l\leq n$) such that 
\begin{equation*}
p_k/q_k= p_l/q_l.
\end{equation*}
\end{lemma}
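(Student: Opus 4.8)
The plan is to argue by contradiction, running the inductive scheme of the proof of Theorem~\ref{19101107} with Lemma~\ref{19100901} in place of Lemma~\ref{19071803}. If the lemma fails, then for every $N$ there is a Dehn filling point of $\mathcal{X}^{(n)}$ with every $|p_j|+|q_j|>N$, with multiplicatively dependent holonomies, and with pairwise distinct slopes $p_1/q_1,\dots,p_n/q_n$. Collecting these gives a sequence of $(p_{i1}/q_{i1},\dots,p_{in}/q_{in})$-Dehn filling points $P_i$ with $|p_{ij}|+|q_{ij}|\to\infty$ for each $j$ (so $P_i\to(1,\dots,1)$ by Theorem~\ref{040605}), with holonomies $\{t_{i1},\dots,t_{in}\}$ satisfying $\prod_{j}t_{ij}^{a_{ij}}=1$ for some $(a_{i1},\dots,a_{in})\neq 0$, and with all slopes distinct. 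Writing $t_{ij}=M_j^{s_{ij}}L_j^{r_{ij}}$ with $p_{ij}r_{ij}-q_{ij}s_{ij}=1$ as in \eqref{19101601}, the relation $\prod_{j}t_{ij}^{a_{ij}}=1$ becomes a single multiplicative equation in $(M_1,L_1,\dots,M_n,L_n)$ whose exponent vector, since some $a_{ij}\neq 0$, is not in the span of those of the Dehn filling equations $M_j^{p_{ij}}L_j^{q_{ij}}=1$; hence each $P_i$ lies in an algebraic subgroup of codimension $n+1$. By Theorem~\ref{19090806}, after passing to a subsequence all $P_i$ lie in $\mathcal{N}_{\mathcal{X}^{(n)}}\cap K$ for a fixed algebraic subgroup $K$ with $\mathcal{N}_{\mathcal{X}^{(n)}}\cap K$ an anomalous subset of $\mathcal{N}_{\mathcal{X}^{(n)}}$ containing $(1,\dots,1)$. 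Since $\mathcal{X}^{(n)}$ is not contained in any proper algebraic subgroup (as one sees from the parametrization $v_j=\tau u_j+\cdots$ of $\log\mathcal{N}_{\mathcal{X}^{(n)}}$ near the origin together with $\tau\notin\mathbb{R}$), a dimension count forces $\mathrm{codim}\,K\geq 2$.

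I now induct on $n$. For $n=1$ the statement is vacuous, since $|t_1|>1$ rules out any relation $t_1^{a_1}=1$ with $a_1\neq 0$. Let $n\geq 2$ and assume the lemma for smaller values. If $\mathrm{codim}\,K=2$, then Lemma~\ref{19100901} shows $\mathcal{N}_{\mathcal{X}^{(n)}}\cap K$, hence every $P_i$, lies in $M_k=L_k=1$ for some $k$, or in $M_k=M_l^{\pm 1},\ L_k=L_l^{\pm 1}$ for some $k\neq l$. The first alternative is impossible, as it would force $t_{ik}=M_k^{s_{ik}}L_k^{r_{ik}}=1$, contradicting $|t_{ik}|>1$. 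In the second alternative, substituting $M_k=t_{ik}^{-q_{ik}}$, $L_k=t_{ik}^{p_{ik}}$, $M_l=t_{il}^{-q_{il}}$, $L_l=t_{il}^{p_{il}}$ and eliminating $t_{il}$ gives $t_{ik}^{\,p_{ik}q_{il}-q_{ik}p_{il}}=1$, so $p_{ik}q_{il}-q_{ik}p_{il}=0$ since $|t_{ik}|>1$; thus $p_{ik}/q_{ik}=p_{il}/q_{il}$, contradicting distinctness of the slopes.

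If $\mathrm{codim}\,K=c\geq 3$, I follow part~(2)(b) of the proof of Theorem~\ref{19101107}. After permuting the cusps and performing Gauss elimination on the exponent lattice of $K$, I may assume the first coordinate pair $(M_1,L_1)$ occurs only in the first one or two equations of $K$; in particular, since $c\geq 3$, at least one equation of $K$, say $\prod_{l=2}^{n}M_l^{a_{jl}}L_l^{b_{jl}}=1$, involves only $M_2,L_2,\dots,M_n,L_n$ and has some $(a_{jl},b_{jl})\neq(0,0)$, and I may assume the first equation is $M_1^{a_{11}}L_1^{b_{11}}\cdots M_n^{a_{1n}}L_n^{b_{1n}}=1$ with $(a_{11},b_{11})\neq(0,0)$. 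Put $\mathcal{X}_1:=\mathcal{X}^{(n)}\cap(M_1^{a_{11}}L_1^{b_{11}}\cdots M_n^{a_{1n}}L_n^{b_{1n}}=1)$ and let $\mathrm{Pr}$ project out the first coordinate pair. Because $(a_{11},b_{11})\neq(0,0)$, the generic fibre of $\mathrm{Pr}|_{\mathcal{X}_1}$ is finite, so $\overline{\mathrm{Pr}\,\mathcal{X}_1}$ is $(n-1)$-dimensional and equals $\mathcal{X}^{(n-1)}$, and $\mathrm{Pr}\,P_i$ is a $(p_{i2}/q_{i2},\dots,p_{in}/q_{in})$-Dehn filling point of $\mathcal{X}^{(n-1)}$ with holonomies $\{t_{i2},\dots,t_{in}\}$. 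Rewriting the surviving equation above in terms of the holonomies gives $\prod_{l=2}^{n}t_{il}^{\,p_{il}b_{jl}-q_{il}a_{jl}}=1$; if all these exponents vanished for infinitely many $i$, then $(a_{jl},b_{jl})$ would be parallel to the primitive vector $(p_{il},q_{il})$ for every $l$ and those $i$, forcing $(a_{jl},b_{jl})=(0,0)$ for all $l$ because $|p_{il}|+|q_{il}|\to\infty$ makes $(p_{il},q_{il})$ take infinitely many values, contradicting $(a_{jl},b_{jl})\neq(0,0)$. Hence $\{t_{i2},\dots,t_{in}\}$ are multiplicatively dependent for all large $i$, and the inductive hypothesis for $\mathcal{X}^{(n-1)}$ yields $k\neq l$ in $\{2,\dots,n\}$ with $p_{ik}/q_{ik}=p_{il}/q_{il}$, again contradicting distinctness. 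This exhausts the cases. The step I expect to require the most care is this last reduction: one must check that projecting $\mathcal{X}^{(n)}$ intersected with one multiplicative equation along a coordinate genuinely appearing in it returns $\mathcal{X}^{(n-1)}$, so that Lemma~\ref{19100901} and the inductive hypothesis still apply, and that the relation inherited from the remaining equations of $K$ is truly nontrivial among the holonomies of $\mathrm{Pr}\,P_i$ — which is precisely where the unboundedness of the filling slopes is used.
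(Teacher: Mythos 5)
Your proposal is correct and follows essentially the same route as the paper: contradiction via Theorem~\ref{19090806} to land in a single anomalous intersection $\mathcal{N}_{\mathcal{X}^{(n)}}\cap K$, then induction on $n$ using Lemma~\ref{19100901} when $K$ has codimension $2$ and Gauss elimination plus projection to $\mathcal{X}^{(n-1)}$ when $\operatorname{codim}K\geq 3$. You fill in a couple of points the paper leaves implicit — the dimension count forcing $\operatorname{codim}K\geq 2$, and the verification that the relation inherited by $\{t_{i2},\dots,t_{in}\}$ from the surviving equation of $K$ is actually nontrivial for large $i$ because $(p_{il},q_{il})$ takes infinitely many primitive values — but these are the same arguments the paper tacitly relies on, so there is no substantive difference.
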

\begin{proof}
More generally, suppose there exists $\{P_i\}_{i\in \mathbb{N}}$ such that, for each $i\in \mathbb{N}$, $P_i$ is a $(p_{i1}/q_{i1},\dots, p_{i(n+m)}/q_{i(n+m)})$-Dehn filling point of $\mathcal{X}^{(n)}$ and 
\begin{equation}
P_i\rightarrow(1, \dots, 1) \quad \text{as} \quad i\rightarrow\infty.
\end{equation}
We further assume the holonomies of each $P_i$ are rationally dependent. By Theorem \ref{19090806}, there are finitely many algebraic subgroups $K_1, \dots, K_m$ such that
\begin{equation*}
P_i\subset \bigcup_{j=1}^m \mathcal{N}_{\mathcal{X}^{(n)}}\cap K_j
\end{equation*}
for $i$ sufficiently large. Without loss of generality, suppose $m=1$ and $K:=K_1$. We claim that, for each $i$ sufficiently large, there are $k,l$ ($1\leq k\neq l\leq n$) such that 
\begin{equation*}
p_{ik}/q_{ik}=p_{il}/q_{il}. 
\end{equation*} 
We prove it by induction on $n$. 

\begin{enumerate}
\item For $n=2$, since every anomalous subvariety is maximal and of codimension $1$, $\mathcal{N}_{\mathcal{X}^{(2)}}\cap K$ is contained in either 
\begin{equation*}
M_1=M_2, \quad L_1=L_2
\end{equation*}
or 
\begin{equation*}
M_1=M_2^{-1}, \quad L_1=L_2^{-1}
\end{equation*}
by Lemma \ref{19100901}. In either case, it implies 
\begin{equation*}
p_{i1}/q_{i1}=p_{i2}/q_{i2} 
\end{equation*}
for all $i$ sufficiently large.

\item Now let us assume $n\geq 3$ and the claim is true up to $n-1$. 
\begin{enumerate}
\item If $\mathcal{N}_{\mathcal{X}^{(n)}}\cap K$ is an anomalous subset of $\mathcal{N}_{\mathcal{X}^{(n)}}$ of codimension $1$, again by Lemma \ref{19100901}, we easily get the desired result. 

\item Now suppose $\mathcal{N}_{\mathcal{X}^{(n)}}\cap K$ is an anomalous subset of $\mathcal{N}_{\mathcal{X}^{(n)}}$ of codimension $\geq 2$. Without loss of generality, by applying Gauss elimination if necessary, assume $K$ is defined by 
\begin{equation}\label{19090808}
\begin{gathered}
M_1^{a_{11}}L_1^{b_{11}}\dots M_n^{a_{1n}}L_n^{b_{1n}}=1,\\
L_1^{b_{21}}M_2^{a_{22}}\dots M_n^{a_{2n}}L_n^{b_{2n}}=1,\\
M_2^{a_{32}}L_2^{b_{32}}\dots M_n^{a_{3n}}L_n^{b_{3n}}=1,\\
\cdots.
\end{gathered}
\end{equation}
Let 
\begin{equation*}
\mathcal{X}^{(n-1)}:=\mathcal{X}\times \cdots \times \mathcal{X}
\end{equation*}
be the product of $(n-1)$-copies of $\mathcal{X}$. Then the third equation in \eqref{19090808} implies the holonomies of a $(p_{i2}/q_{i2}, \dots, p_{in}/q_{in})$-Dehn filling point of $\mathcal{X}^{(n-1)}$ are rationally dependent. By induction, for $i$ sufficiently large, we get $k,l$ ($2\leq k\neq l\leq n$) such that 
\begin{equation*}
p_{ik}/q_{ik}=p_{il}/q_{il}.
\end{equation*}
This completes the proof. 
\end{enumerate}
\end{enumerate}
\end{proof}

Finally we prove Theorem \ref{19090606} which is restated below for reader's convenience.

\begin{theorem}
Let $\mathcal{M}$ be a $1$-cusped hyperbolic $3$-manifold having non-quadratic cusp shape and $\mathcal{M}_{p/q}$ be its $p/q$-Dehn filling. For $n>0$, if
\begin{equation*}
\{p_1/q_1, \dots, p_n/q_n\}
\end{equation*}
is $n$ distinct Dehn filling coefficients with $|p_i|+|q_i|$ ($1\leq i\leq n$) sufficiently large, then there are no $a_i\in \mathbb{Q}\backslash\{0\}$ ($1\leq i\leq n$) such that 
\begin{equation*}
a_1\text{pvol}_{\mathbb{C}}\;\mathcal{M}_{p_1/q_1}+\cdots+a_n\text{pvol}_{\mathbb{C}}\;\mathcal{M}_{p_n/q_n}\equiv 0 \mod i\pi^2\mathbb{Z}.
\end{equation*}  
\end{theorem}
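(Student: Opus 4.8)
The strategy is to reduce the statement to Lemma~\ref{19091508} via the Neumann--Zagier complex volume formula. Recall that by the definition of the pseudo complex volume,
\begin{equation*}
\text{pvol}_{\mathbb{C}}\;\mathcal{M}_{p_i/q_i}=\text{vol}_{\mathbb{C}}\;\mathcal{M}-\frac{\pi}{2}\log t_{p_i/q_i}\mod i\pi^2\mathbb{Z},
\end{equation*}
where $t_{p_i/q_i}$ denotes the holonomy of the core geodesic of $\mathcal{M}_{p_i/q_i}$. Suppose for contradiction that there were infinitely many $n$-tuples of distinct filling coefficients (with coefficients tending to infinity) and rationals $a_i\in\mathbb{Q}\backslash\{0\}$ realizing the relation $\sum_i a_i\,\text{pvol}_{\mathbb{C}}\;\mathcal{M}_{p_i/q_i}\equiv 0\pmod{i\pi^2\mathbb{Z}}$. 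Clearing denominators, we may assume the $a_i$ are integers. Substituting the formula, the relation becomes
\begin{equation*}
\Big(\sum_{i=1}^n a_i\Big)\text{vol}_{\mathbb{C}}\;\mathcal{M}-\frac{\pi}{2}\sum_{i=1}^n a_i\log t_{p_i/q_i}\equiv 0\mod i\pi^2\mathbb{Z}.
\end{equation*}

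\textbf{Handling the $\text{vol}_{\mathbb{C}}\;\mathcal{M}$ term.} The first obstacle is the presence of the constant $(\sum_i a_i)\,\text{vol}_{\mathbb{C}}\;\mathcal{M}$. Here one exploits that there are infinitely many such relations while $\text{vol}_{\mathbb{C}}\;\mathcal{M}$ is a fixed constant: by taking the difference of two such relations coming from two different tuples, or by a pigeonhole/compactness argument on the (bounded) coefficients appearing in a minimal-length relation, one arranges that $\sum_i a_i=0$. More carefully, since for sufficiently large coefficients the $t_{p_i/q_i}$ are close to $1$ and $\log t_{p_i/q_i}\to 0$, the term $-\frac{\pi}{2}\sum a_i\log t_{p_i/q_i}$ is small, so if $\sum a_i\neq 0$ the relation would force $\text{vol}_{\mathbb{C}}\;\mathcal{M}$ itself to be congruent to something small mod $i\pi^2\mathbb{Z}$, which (after also controlling the size of the $a_i$ via the height bound, Theorem~\ref{19090803}, and Northcott's theorem applied to the holonomies) is impossible for all but finitely many tuples. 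This reduces us to relations with $\sum_i a_i=0$, i.e.\ to a genuine multiplicative relation
\begin{equation*}
\prod_{i=1}^n t_{p_i/q_i}^{\,a_i}=1
\end{equation*}
among the core holonomies, valid up to a root of unity (absorbed since these holonomies lie in a fixed number field and the ambiguity is in $i\pi^2\mathbb{Z}$).

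\textbf{Invoking Lemma~\ref{19091508}.} A $(p_1/q_1,\dots,p_n/q_n)$-Dehn filling point of $\mathcal{X}^{(n)}=\mathcal{X}\times\cdots\times\mathcal{X}$ has holonomy set $\{t_{p_1/q_1},\dots,t_{p_n/q_n}\}$, and the relation $\prod_i t_{p_i/q_i}^{a_i}=1$ with all $a_i\neq 0$ says precisely that these holonomies are rationally (indeed multiplicatively) dependent. By Lemma~\ref{19091508}, for all sufficiently large coefficients this forces $p_k/q_k=p_l/q_l$ for some $k\neq l$, contradicting the hypothesis that the $n$ filling coefficients are distinct. This contradiction completes the proof.

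\textbf{Main obstacle.} The step I expect to be most delicate is disposing of the $(\sum_i a_i)\,\text{vol}_{\mathbb{C}}\;\mathcal{M}$ summand cleanly: one must make sure that the reduction to $\sum_i a_i=0$ does not secretly require knowing $\text{vol}_{\mathbb{C}}\;\mathcal{M}$ is irrational over $\pi^2\mathbb{Q}$ (which is open), and instead only uses the qualitative facts that the relevant holonomies have bounded height (Theorem~\ref{19090803}), that distinct bounded-height algebraic numbers are finite in number (Theorem~\ref{height}), and that $t_{p_i/q_i}\to 1$ (Theorem~\ref{040605}). Once that bookkeeping is in place, everything funnels into Lemma~\ref{19091508} and the argument closes.
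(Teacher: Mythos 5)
Your reduction to Lemma~\ref{19091508} via the Neumann--Zagier formula, the clearing of denominators, and the treatment of the case $\sum_i a_i=0$ (where the relation becomes $\prod_i t_{p_i/q_i}^{a_i}=1$ and Lemma~\ref{19091508} applied to $\mathcal{X}^{(n)}$ produces two equal coefficients) all match the paper. The gap is in your handling of the case $\sum_i a_i\neq 0$, which you identify as the delicate step but do not resolve correctly.

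Your primary suggestion --- that since $\log t_{p_i/q_i}\to 0$, the term $-\frac{\pi}{2}\sum a_i\log t_{p_i/q_i}$ is small, forcing $\text{vol}_{\mathbb{C}}\;\mathcal{M}$ to be near $0$ modulo $i\pi^2\mathbb{Z}$ --- fails because nothing bounds the integers $a_i$. For instance $a_1=N$, $a_2=-N+1$ has $\sum a_i=1$ while $a_1\log t_1+a_2\log t_2$ can be made arbitrary as $N$ grows; height bounds and Northcott control the \emph{holonomies}, not the exponents $a_i$, so no finiteness of tuples follows. Your secondary suggestion --- "taking the difference of two such relations" --- is the right instinct but you misread what it yields: eliminating $\text{vol}_{\mathbb{C}}\;\mathcal{M}$ between the $i$-th and $j$-th relations (scaled by $\sum_k a_{jk}$ and $\sum_k a_{ik}$ respectively) does \emph{not} produce a relation with $\sum=0$ among the original $n$ holonomies; it produces the relation
\begin{equation*}
\Big(\prod_{k=1}^{n}t_{p_{ik}/q_{ik}}^{a_{ik}}\Big)^{\sum_k a_{jk}}=\Big(\prod_{k=1}^{n}t_{p_{jk}/q_{jk}}^{a_{jk}}\Big)^{\sum_k a_{ik}},
\end{equation*}
a multiplicative dependence among all $2n$ holonomies $\{t_{p_{i1}/q_{i1}},\dots,t_{p_{in}/q_{in}},t_{p_{j1}/q_{j1}},\dots,t_{p_{jn}/q_{jn}}\}$. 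The paper then views this as a Dehn filling point of $\mathcal{X}^{(2n)}$, not $\mathcal{X}^{(n)}$, and invokes Lemma~\ref{19091508} for the $2n$-fold product, choosing $i,j$ so that the two tuples are distinct. Without this step your argument does not close, since you have no valid route from $\sum_i a_i\neq 0$ to an honest multiplicative relation to feed into the lemma.
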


\begin{proof}
On the contrary, suppose there exists a family  
\begin{equation}\label{19101402}
\{\mathcal{M}_{p_{i1}/q_{i1}}, \dots, \mathcal{M}_{p_{in}/q_{in}}\}_{i\in \mathbb{N}}
\end{equation}
of infinitely many $n$ distinct hyperbolic Dehn fillings of $\mathcal{M}$ such that 
\begin{equation}\label{19091513}
\sum_{k=1}^n a_{ik} \text{pvol}\;\mathcal{M}_{p_{ik}/q_{ik}}\equiv 0\mod i\pi^2\mathbb{Z}\quad (a_{ik}\in \mathbb{Z}\backslash\{0\}).
\end{equation}
For each $k$ ($1\leq k\leq n$), we further suppose  
\begin{equation}\label{19100704}
\begin{gathered}
\lim_{i\rightarrow \infty} |p_{ik}|+|q_{ik}|=\infty.
\end{gathered}
\end{equation}
Since 
\begin{equation*}
\text{pvol}\;(\mathcal{M}_{p_{ik}/q_{ik}})\equiv\text{vol}\;\mathcal{M}-\frac{\pi}{2}\log\;t_{p_{ik}/q_{ik}} \mod i\pi^2\mathbb{Z}
\end{equation*}
where $t_{p_{ik}/q_{ik}}$ is the holonomy of $\mathcal{M}_{(p_{ik}/q_{ik})}$, \eqref{19091513} implies 
\begin{equation}
\sum_{k=1}^n a_{ik}\big(\text{vol}\;\mathcal{M}-\frac{\pi}{2}\log\;t_{p_{ik}/q_{ik}}\big)\equiv 0 \mod i\pi^2\mathbb{Z}
\end{equation}
and so  
\begin{equation}\label{19100701}
\text{vol}\;\mathcal{M}\sum_{k=1}^n a_{ik}-\frac{\pi}{2}\sum_{k=1}^n \big(a_{ik}\log\;t_{p_{ik}/q_{ik}}\big)\equiv 0\mod i\pi^2\mathbb{Z}.
\end{equation}
\begin{enumerate}
\item First if  
\begin{equation}\label{19100702}
\sum_{k=1}^n a_{ik}=0,
\end{equation}
then
\begin{equation}\label{19100101}
\begin{gathered}
\sum_{k=1}^n a_{ik}\log\;t_{p_{ik}/q_{ik}}\equiv 0\mod 2\pi i\mathbb{Z}
\end{gathered}
\end{equation}
and this is equivalent to 
\begin{equation}\label{19101401}
\begin{gathered}
\prod_{k=1}^n t^{a_{ik}}_{p_{ik}/q_{ik}}=1.
\end{gathered}
\end{equation}
Let $\mathcal{X}$ be the A-polynomial of $\mathcal{M}$ and 
\begin{equation*}
\mathcal{X}^{(n)}:=\mathcal{X}\times \cdots \times \mathcal{X}.
\end{equation*}
For each $i\in \mathbb{N}$, we consider 
\begin{equation}\label{19102102}
\{t_{p_{i1}/q_{i1}}, \dots, t_{p_{in}/q_{in}}\}
\end{equation}
as the set of the holonomies of a $(p_{i1}/q_{i1}, \dots, p_{in}/q_{in})$-Dehn filling point of $\mathcal{X}^{(n)}$. For each $i\in \mathbb{N}$, since the elements in \eqref{19102102} are rationally dependent by \eqref{19101401}, if $i$ is sufficiently large, we get $k, l$ ($1\leq k\neq l\leq n$) such that    
\begin{equation*}
p_{ik}/q_{ik}=p_{il}/q_{il}
\end{equation*}
by Lemma \ref{19091508}. But this contradicts the assumption that \eqref{19101402} is a set of infinitely many $n$-distinct Dehn fillings for each $i\in \mathbb{N}$.  

\item Now suppose 
\begin{equation}
\sum_{k=1}^n a_{ik}\neq 0\quad (i\in \mathbb{N})
\end{equation}
and pick $i,j\in \mathbb{N}$ sufficiently large such that\footnote{Note that there are infinitely many such pairs by the assumption \eqref{19100704}.} 
\begin{equation}\label{19092204}
\{p_{i1}/q_{i1},\dots, p_{in}/q_{in}\}\neq \{p_{j1}/q_{j1},\dots, p_{j1}/q_{j1}\}.  
\end{equation}
By \eqref{19100701}, 
\begin{equation}
\begin{gathered}
\Big(\sum_{k=1}^n a_{jk}\Big)\Big(\sum_{k=1}^n a_{ik}\log\;t_{p_{ik}/q_{ik}}\Big)
\equiv\Big(\sum_{k=1}^n a_{ik}\Big)\Big(\sum_{k=1}^n a_{jk}\log\;t_{p_{jk}/q_{jk}}\Big)\mod 2\pi i\mathbb{Z}
\end{gathered}
\end{equation}
 and this implies
\begin{equation}
\begin{gathered}
\Big(\prod_{k=1}^n t^{a_{ik}}_{p_{ik}/q_{ik}}\Big)^{\sum_{k=1}^n a_{jk}}=\Big(\prod_{k=1}^n t^{a_{jk}}_{p_{jk}/q_{jk}}\Big)^{\sum_{k=1}^n a_{ik}}.
\end{gathered}
\end{equation}
Similar to the previous case, let 
\begin{equation*}
\mathcal{X}^{(2n)}
\end{equation*}
be the product of $2n$-copies of $\mathcal{X}$ and consider 
\begin{equation*}
\{t_{p_{i1}/q_{i1}}, \dots, t_{p_{in}/q_{in}}, t_{p_{j1}/q_{j1}}, \dots, t_{p_{jn}/q_{jn}}\}
\end{equation*}
as the set of the holonomies of a $(p_{i1}/q_{i1}, \dots, p_{in}/q_{in}, p_{in}/q_{in}, \dots, p_{jn}/q_{jn})$-Dehn filling point of $\mathcal{X}^{(2n)}$. Again by Lemma \ref{19091508}, we get $k, l$ ($1\leq k, l\leq n$) such that 
\begin{equation*}
p_{ik}/q_{ik}=p_{jl}/q_{jl}.
\end{equation*}
But this contradicts the assumption \eqref{19092204}. This completes the proof. 
\end{enumerate}
\end{proof}

\vspace{5 mm}
Department of Mathematics, POSTECH\\
77 Cheong-Am Ro, Pohang, South Korea\\
\\
\emph{Email Address}: bogwang.jeon@postech.ac.kr

\newpage
\begin{center}
\textbf{\large Appendix A. A simple proof of Corollary \ref{19080902} \\
by Ian Agol}
\end{center}

In this note, we give a simple proof of Corollary \ref{19080902}. We only prove it for the single cusped case, since it is just
a matter of bookkeeping to generalize it to multi-cusped cases. 

Consider an orientable hyperbolic 3-manifold $\mathcal{M}$ with
a single cusp. Take a maximal horoball neighborhood $\mathcal{H}$ of
this cusp. Choosing longitude and meridian coordinates for the
torus $\partial\mathcal{H}$, let $\mathcal{M}(p/q)$ denote the Dehn filling of $\mathcal{M}$ of
slope $p/q$. For $|p|+|q|$ sufficiently large, $\mathcal{M}(p/q)$ will be obtained
by hyperbolic Dehn filling on $\mathcal{M}$ in the sense of Thurston, and hence the core of the
Dehn filling will be a geodesic. Let $t_{p/q}$ denote the holonomy of this geodesic. 

\begin{theorem}
Let $\mathcal{M}$ be a 1-cusped hyperbolic manifold whose cusp
does not admit a non-trivial (orientation-preserving) isometry. Then for $|p|+|q|$ and $|p'|+|q'|$ sufficiently large,
 $$t_{p/q}=t_{p'/q'}$$
if and only if $p/q=p'/q'$.

\end{theorem}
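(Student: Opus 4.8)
The forward implication is trivial: if $p/q=p'/q'$ then $\mathcal M(p/q)$ and $\mathcal M(p'/q')$ are literally the same Dehn filling, with the same core geodesic, so the holonomies agree. For the converse the plan is to reconstruct the slope $p/q$ from the holonomy $t_{p/q}$ of the core geodesic. First I would record that $\log t_{p/q}$ is half the complex translation length of the core geodesic $\gamma_{p/q}$ of $\mathcal M(p/q)$: its real part is half the length of $\gamma_{p/q}$ and its imaginary part half the rotation angle. Since drilling $\gamma_{p/q}$ out of $\mathcal M(p/q)$ recovers $\mathcal M$, and the meridian of the removed solid torus is exactly the slope $p/q$ on the cusp torus of $\mathcal M$, for $|p|+|q|$ large the holonomy $t_{p/q}$ is a function of the slope alone; call it $t_\sigma$. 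It then suffices to prove that $\sigma\mapsto t_\sigma$ is injective on slopes of sufficiently large length.

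Next I would put $t_\sigma$ into Neumann--Zagier normal form. Fix the maximal horoball $\mathcal H\subset\mathcal M$, let $T=\partial\mathcal H$ have area $A$, and lift the cusp so that $m,l$ act by translations $\mathfrak m,\mathfrak l\in\mathbb C$ with $\tau=\mathfrak l/\mathfrak m$; for a slope $\sigma=p/q$ write $\mathfrak c_\sigma=p\mathfrak m+q\mathfrak l$, so $|\mathfrak c_\sigma|$ is the geodesic length of $\sigma$ on $T$. From the Neumann--Zagier Dehn surgery equations one gets
\[
\log t_\sigma \;=\; 2\pi i\,\frac{\mathfrak a_\sigma}{\mathfrak c_\sigma}\;+\;E(\sigma),
\]
where $\mathfrak a_\sigma$ completes $\mathfrak c_\sigma$ to a $\mathbb Z$-basis of the cusp lattice $\Lambda=\mathfrak m\mathbb Z+\mathfrak l\mathbb Z$ with $\operatorname{Im}(\mathfrak a_\sigma\overline{\mathfrak c_\sigma})=A$, and $E(\sigma)\to 0$ as the length of $\sigma$ grows (the $2\pi i\mathbb Z$-ambiguity of $\log t_\sigma$ corresponds to the ambiguity $\mathfrak a_\sigma\mapsto\mathfrak a_\sigma+\mathfrak c_\sigma$). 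Ignoring $E$, the datum $\tfrac1{2\pi i}\log t_\sigma$ is, modulo $\mathbb Z$, the point $\mathfrak a_\sigma/\mathfrak c_\sigma$, i.e.\ the modulus of $\Lambda$ expressed in the basis $(\mathfrak c_\sigma,\mathfrak a_\sigma)$. Two slopes produce the same such modulus only if the two bases differ by multiplication by some $\lambda$ with $\lambda\Lambda=\Lambda$; but since the cusp admits no nontrivial orientation-preserving isometry we have $\lambda=\pm1$, so the two slopes coincide. Thus the leading term of $\log t_\sigma$ already pins down $\sigma$, which is the real content of the theorem.

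The remaining point is to make sure the error $E$ cannot let two distinct long slopes collide. Distinct slopes of length $\lesssim C$ give moduli $\mathfrak a_\sigma/\mathfrak c_\sigma$ lying near the real axis at height $\asymp A/C^2$ and $\gtrsim A/C^2$ apart (this is just the Ford-circle spacing of rationals), so it is enough to know that $|E(\sigma)|$ has strictly smaller order than $A/|\mathfrak c_\sigma|^2$, equivalently than $\operatorname{length}(\gamma_\sigma)$. This is where I would feed in the effective bilipschitz control on drilling and filling (of the type in \cite{FPS}): the complement of the maximal tube about $\gamma_{p/q}$ in $\mathcal M(p/q)$ is $(1+O(\operatorname{length}\gamma_{p/q}))$-bilipschitz to $\mathcal M$ outside $\mathcal H$, which bounds the deviation of the Dehn surgery equations from their linearization and hence bounds $E(\sigma)$ to the required order. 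Since $t_{p/q}=t_{p'/q'}$ forces $\gamma_{p/q}$ and $\gamma_{p'/q'}$ to have equal length, hence $|\mathfrak c_{p/q}|$ and $|\mathfrak c_{p'/q'}|$ to agree up to that same small error, the two moduli are then within $o(A/C^2)$ of one another, so they are equal and $p/q=p'/q'$.

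The main obstacle is therefore this last step: obtaining a bound on the error in the Neumann--Zagier normal form that is strong enough to beat the $A/|\mathfrak c_\sigma|^2$ spacing of distinct slopes — everything before it is essentially formal. The multi-cusped case adds nothing conceptual: one replaces $\mathfrak c_\sigma$, $\mathfrak a_\sigma$, and $\Lambda$ by tuples, uses the product lattice, and runs the same argument cusp by cusp, so it is only a matter of bookkeeping.
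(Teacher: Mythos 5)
Your argument is essentially correct but takes a genuinely different route from the proof in the appendix, so let me compare them. Agol's proof is purely geometric: he passes to the maximal embedded tube $\mathcal{T}_{p/q}$ about the core geodesic, notes that it converges in the pointed Gromov--Hausdorff topology to the maximal horoball $\mathcal{H}$, observes that an equality $t_{p/q}=t_{p'/q'}$ forces (after a volume comparison) the tubes $\mathcal{T}_{p/q}$ and a shrunken $\mathcal{T}'_{p'/q'}$ to be isometric, and then uses the hypothesis that $\partial\mathcal{H}$ admits no nontrivial orientation-preserving isometry (an open condition in moduli space) to conclude the induced isometry of boundary tori is trivial and therefore matches the filling slopes. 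Your approach is instead analytic, via the Neumann--Zagier asymptotics: you read off from $\frac{1}{2\pi i}\log t_\sigma$ the modulus $\mathfrak{a}_\sigma/\mathfrak{c}_\sigma$ of the cusp lattice $\Lambda$ in the basis determined by the filling slope, and the no-symmetry hypothesis enters as the statement that $\lambda\Lambda=\Lambda$ forces $\lambda=\pm1$, so the modulus determines the slope. This is closer in spirit to the main body of the paper, which also works with the $\log$-lattice picture of the cusp. Two small comments. First, the Ford-circle spacing estimate $\gtrsim A/C^2$ does hold, but the implicit constant should be traced to the injectivity radius of $\tau$ in $\mathbb{H}^2/\mathrm{PSL}_2(\mathbb{Z})$ (which is positive for the fixed non-symmetric $\tau$); it is not uniform in $\tau$. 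Second, the step you flag as the main obstacle is actually easier than you suggest, and one does not need the FPS bilipschitz machinery. Writing $v=\tau u + c_4u^3+\cdots$ (evenness of the NZ potential) and $pu+qv=2\pi i$, and letting $u_0=2\pi i/(p+q\tau)$, a direct expansion of $2\log t_\sigma = ru+sv$ gives an error term proportional to
\begin{equation*}
c_4\,u_0^3\Big(s-\frac{q(r+s\tau)}{p+q\tau}\Big)=\frac{c_4\,u_0^3}{p+q\tau},
\end{equation*}
where the cancellation uses $ps-qr=1$. Thus $E(\sigma)=O\big(|\mathfrak{c}_\sigma|^{-4}\big)$, which beats the spacing $\asymp A/|\mathfrak{c}_\sigma|^2$ by a full factor of $|\mathfrak{c}_\sigma|^{-2}$, and the argument closes cleanly from the classical NZ estimates alone.
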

\begin{proof}
The assumption on the cusp $\mathcal{H}$
means that any orientation-preserving isometry of $\partial{\mathcal{H}}$ 
is a rotation or elliptic involution. Such isometries always preserve isotopy
classes of (unoriented) curves. 

For $|p|+|q|$ large enough, let $\gamma_{p/q}$ be the core geodesic
of the Dehn filling $\mathcal{M}(p/q)$, with holonomy $t_{p/q}$. Then
for $|p|+|q|$ large enough, there will be a maximal embedded tubular
neighborhood $\mathcal{T}_{p/q}$ of radius $R_{p/q}$ about $\gamma_{p/q}$,
such that $\mathcal{T}_{p/q}$ converges geometrically to $\mathcal{H}$
as $|p|+|q| \to \infty$ in the pointed Gromov-Hausdorff topology, where
by convention we take basepoints to lie in $\partial{T}_{p/q}$ and $\partial\mathcal{H}$. 
Since $\partial \mathcal{H}$
does not admit a non-trivial orientation-preserving isometry, then for $|p|+|q|$
large enough, $\partial\mathcal{T}_{p/q}$ will also not admit a non-trivial orientation
preserving isometry (since the moduli of tori admitting an isometry are two isolated
points in moduli space, the square and hexagonal tori). 

Now, suppose $t_{p/q}=t_{p'/q'}$ for $|p|+|q|$ and $|p'|+|q'|$ large enough. 
Then for any $\epsilon>0$, we may choose them large enough so that  $|Vol(\mathcal{T}_{p/q}) -Vol(\mathcal{H})|<\epsilon$
and $|Vol(\mathcal{T}_{p'/q'})-Vol(\mathcal{H})| <\epsilon$. 
Then for any $\delta > 0$ we may choose the coefficients  large enough so
that $|R_{p/q}-R_{p'/q'}|<\delta$, since $Vol(\mathcal{T}_{p/q})$ is determined
as a continuous function of $t_{p/q}$ and $R_{p/q}$. Suppose (WLOG) that $R_{p/q}< R_{p'/q'}$. 
Let $\mathcal{T}'_{p'/q'}$ be the tubular neighborhood of radius $R_{p/q}$ about
$\gamma_{p'/q'}$ in $\mathcal{M}_{p'/q'}$, which is therefore isometric to 
$\mathcal{T}_{p/q}$. Moreover, $\partial\mathcal{T}'_{p'/q'}$ is arbitrarily close
to $\partial\mathcal{T}_{p'/q'}$ in the Gromov-Hausdorff topology. Hence, we have two
tori which are arbitrarily close to $\partial \mathcal{H}$ in the Gromov-Hausdorff topology
which are isometric. Since no torus in a neighborhood of $\partial \mathcal{H}$
in moduli space admits a non-trivial symmetry, this isometry must be trivial,
and hence map the curve of slope $p/q$ to the curve of slope $p'/q'$,
and hence $p/q=p'/q'$. 
\end{proof}

{\bf Remark:} The assumption that the cusps admit no symmetries is equivalent
to saying that the cusp shape does not correspond to the square torus (obtained
by identifying opposite sides of a square) or a hexagonal torus (obtained by identifying
opposite sides of a regular hexagon). In turn, this implies that the cusp shape does
not lie in $\mathbb{Q}(i)$ or $\mathbb{Q}(\sqrt{-3})$ respectively. \\
\\
University of California, Berkeley \\
970 Evans Hall \#3840 \\ 
Berkeley, CA 94720-3840\\               
\\
\emph{Email Address}: ianagol@math.berkeley.edu

\end{document}